\newif\ifdraft
\newenvironment{renumerate}{%
	\begin{enumerate}[label=(\roman{*}), ref=(\roman{*})]
}{%
	\end{enumerate}%
}
\newenvironment{nenumerate}{%
	\begin{enumerate}[label=(\arabic{*}), ref=(\arabic{*})]
}{%
	\end{enumerate}%
}
\definecolor{labelkey}{gray}{0.5}
\newlength{\myarrowsize} 
\newenvironment{diagram*}[2]{%
\[%
\begin{tikzpicture}[>=cmto,baseline=(current bounding box.center),%
	to/.style={->,font=\scriptsize,cap=round},%
	into/.style={cmhook->,font=\scriptsize,cap=round},%
	onto/.style={-cmonto,font=\scriptsize,cap=round},%
	math/.style={matrix of math nodes, row sep=#2, column sep=#1,%
		text height=1.5ex, text depth=0.25ex}]%
}{%
\end{tikzpicture}%
\]%
\ignorespacesafterend%
}
\newcommand{\derR}{\mathbf{R}}
\newcommand{\cohH}{\mathcal{H}}
\newcommand{\ZZ}{\mathbb{Z}}
\newcommand{\QQ}{\mathbb{Q}}
\newcommand{\CC}{\mathbb{C}}
\newcommand{\HH}{\mathbb{H}}
\newcommand{\PP}{\mathbb{P}}
\DeclareMathOperator{\im}{im}
\DeclareMathOperator{\coker}{coker}
\DeclareMathOperator{\Supp}{Supp}
\DeclareMathOperator{\Pic}{Pic}
\newcommand{\shf}[1]{\mathscr{#1}}
\newcommand{\restr}[1]{\big\vert_{#1}}
\def\overbar#1#2#3{{%
	\setbox0=\hbox{\displaystyle{#1}}%
	\dimen0=\wd0
	\advance\dimen0 by -#2 
	\vbox {\nointerlineskip \moveright #3 \vbox{\hrule height 0.3pt width \dimen0}%
		\nointerlineskip \vskip 1.5pt \box0}%
}}
\newcommand{\into}{\hookrightarrow}
\let\@@seccntformat\@seccntformat
\renewcommand*{\@seccntformat}[1]{%
  \expandafter\ifx\csname @seccntformat@#1\endcsname\relax
    \expandafter\@@seccntformat
  \else
    \expandafter
      \csname @seccntformat@#1\expandafter\endcsname
  \fi
    {#1}%
}
\newcommand*{\@seccntformat@subsection}[1]{%
  \textbf{\csname the#1\endcsname.}
}
\let\@paragraph\paragraph
\renewcommand*{\paragraph}[1]{%
	\vspace{0.3\baselineskip}%
	\@paragraph{\textit{#1}}%
}
\newtheorem{theorem}[equation]{Theorem}
\newtheorem*{theorem*}{Theorem}
\newtheorem{lemma}[equation]{Lemma}
\newtheorem*{lemma*}{Lemma}
\newtheorem{corollary}[equation]{Corollary}
\newtheorem{proposition}[equation]{Proposition}
\newtheorem*{proposition*}{Proposition}
\theoremstyle{definition}
\newtheorem{definition}[equation]{Definition}
\newtheorem*{definition*}{Definition}
\newtheorem{remark}[equation]{Remark}
\newtheorem{example}[equation]{Example}
\newtheorem*{example*}{Example}
\newtheorem*{problem*}{Problem}
\newtheorem*{notation}{Notation}
\theoremstyle{plain}
\newcommand{\theoremref}[1]{\hyperref[#1]{Theorem~\ref*{#1}}}
\newcommand{\lemmaref}[1]{\hyperref[#1]{Lemma~\ref*{#1}}}
\newcommand{\definitionref}[1]{\hyperref[#1]{Definition~\ref*{#1}}}
\newcommand{\propositionref}[1]{\hyperref[#1]{Proposition~\ref*{#1}}}
\newcommand{\conjectureref}[1]{\hyperref[#1]{Conjecture~\ref*{#1}}}
\newcommand{\corollaryref}[1]{\hyperref[#1]{Corollary~\ref*{#1}}}
\newcommand{\exampleref}[1]{\hyperref[#1]{Example~\ref*{#1}}}
\let\old@caption\caption
\renewcommand*{\caption}[1]{%
	\setcounter{figure}{\value{equation}}%
	\stepcounter{equation}%
	\old@caption{#1}\relax%
}
\newcounter{intro}
\newtheorem{intro-conjecture}[intro]{Conjecture}
\newtheorem{intro-corollary}[intro]{Corollary}
\newtheorem{intro-theorem}[intro]{Theorem}
\newcommand{\parref}[1]{\hyperref[#1]{\S\ref*{#1}}}
\newcommand*\if@single[3]{%
  \setbox0\hbox{${\mathaccent"0362{#1}}^H$}%
  \setbox2\hbox{${\mathaccent"0362{\kern0pt#1}}^H$}%
  \ifdim\ht0=\ht2 #3\else #2\fi
  }
\newcommand*\rel@kern[1]{\kern#1\dimexpr\macc@kerna}
\newcommand*\widebar[1]{\@ifnextchar^{{\wide@bar{#1}{0}}}{\wide@bar{#1}{1}}}
\newcommand*\wide@bar[2]{\if@single{#1}{\wide@bar@{#1}{#2}{1}}{\wide@bar@{#1}{#2}{2}}}
\newcommand*\wide@bar@[3]{%
  \begingroup
  \def\mathaccent##1##2{%
    \if#32 \let\macc@nucleus\first@char \fi
    \setbox\z@\hbox{$\macc@style{\macc@nucleus}_{}$}%
    \setbox\tw@\hbox{$\macc@style{\macc@nucleus}{}_{}$}%
    \dimen@\wd\tw@
    \advance\dimen@-\wd\z@
    \divide\dimen@ 3
    \@tempdima\wd\tw@
    \advance\@tempdima-\scriptspace
    \divide\@tempdima 10
    \advance\dimen@-\@tempdima
    \ifdim\dimen@>\z@ \dimen@0pt\fi
    \rel@kern{0.6}\kern-\dimen@
    \if#31
      \overline{\rel@kern{-0.6}\kern\dimen@\macc@nucleus\rel@kern{0.4}\kern\dimen@}%
      \advance\dimen@0.4\dimexpr\macc@kerna
      \let\final@kern#2%
      \ifdim\dimen@<\z@ \let\final@kern1\fi
      \if\final@kern1 \kern-\dimen@\fi
    \else
      \overline{\rel@kern{-0.6}\kern\dimen@#1}%
    \fi
  }%
  \macc@depth\@ne
  \let\math@bgroup\@empty \let\math@egroup\macc@set@skewchar
  \mathsurround\z@ \frozen@everymath{\mathgroup\macc@group\relax}%
  \macc@set@skewchar\relax
  \let\mathaccentV\macc@nested@a
  \if#31
    \macc@nested@a\relax111{#1}%
  \else
    \def\gobble@till@marker##1\endmarker{}%
    \futurelet\first@char\gobble@till@marker#1\endmarker
    \ifcat\noexpand\first@char A\else
      \def\first@char{}%
    \fi
    \macc@nested@a\relax111{\first@char}%
  \fi
  \endgroup
}
\newcommand{\pp}{\mathfrak{p}}
\DeclareMathOperator{\Center}{center}
\DeclareMathOperator{\discrep}{\mathnormal{a}}
\DeclareMathOperator{\Cart}{Cart}
\DeclareMathOperator{\Weil}{Weil}
\DeclareMathOperator{\Divi}{Div}
\DeclareMathOperator{\Perv}{Perv}
\begin{document}

\vspace{\baselineskip}

\title{On the nonnegativity of stringy Hodge numbers}

\author[Sebasti\'an~Olano]{Sebasti\'an~Olano}
\address{Department of Mathematics, Northwestern University, 
2033 Sheridan Road, Evanston, IL
60208, USA} \email{{\tt seolano@math.northwestern.edu}}

\thanks{}


\begin{abstract} We study the nonnegativity of stringy Hodge numbers of a projective variety with Gorenstein canonical singularities, which was conjectured by Batyrev. We prove that the $(p,1)$-stringy Hodge numbers are nonnegative, and for threefolds we obtain new results about the stringy Hodge diamond, which hold even when the stringy $E$-function is not a polynomial. We also use the Decomposition Theorem and mixed Hodge theory to prove Batyrev's conjecture for a class of fourfolds. 
\end{abstract}

\maketitle

\section{Introduction}

The purpose of this paper is to give some positive results towards a conjecture of Batyrev about the nonnegativity of stringy Hodge numbers. All varieties considered are over the field of complex numbers.\\

The stringy $E$-function is a generalization of the $E$-polynomial, or the Hodge-Deligne polynomial, of an algebraic variety. In \cite{batyrev98} and \cite{bat99}, Batyrev introduced this notion for varieties with log-terminal singularities and for klt pairs. In the case of a projective variety $X$ with at most Gorenstein canonical singularities, the stringy $E$-function is a rational function with integer coefficients (see Section \ref{preliminaries}). If we write it as $E_{st}(X) = E_{st}(X; u,v) = \sum{b_{p,q}u^pv^q}$, the stringy Hodge numbers are defined as: $$h^{p,q}_{st}(X) := (-1)^{p+q}b_{p,q}.$$
Batyrev made the following basic conjecture in \cite[Conjecture 3.10]{batyrev98}.

\begin{intro-conjecture}[Batyrev]\label{conjecture} Let $X$ be a projective variety with Gorenstein canonical singularities. Assume that $E_{st}(X;u,v)$ is a polynomial. Then all stringy Hodge numbers $h_{st}^{p,q}(X)$ are nonnegative.
\end{intro-conjecture}

In \cite{batyrev98} the numbers $h^{p,q}_{st}(X)$ are only called stringy Hodge numbers if $E_{st}(X)$ is a polynomial, and in that case they are similar to the Hodge numbers of a smooth projective variety (see Section \ref{preliminaries}). Batyrev's motivation comes from mirror symmetry and in many examples of interest in this area, the stringy $E$-function is a polynomial. However, this function is defined on a larger class of varieties and, even if it is not a polynomial, the nonnegativity of $h^{p,q}_{st}(X)$ represents a basic numerical constraint on the exceptional divisors in a log-resolution of singularities. Hence, the question is naturally of interest to birational geometry as well.\\

There are several cases in which \conjectureref{conjecture} is known to be true. The first is that of surfaces with canonical singularities. This follows from two facts: stringy Hodge numbers do not change under crepant morphisms as proved by Batyrev (see \theoremref{bat1} below), and every surface with canonical singularities admits a crepant resolution. For toric varieties the Conjecture is also true. It was shown that the stringy $E$-function is a polynomial in \cite{batyrev98}, while \conjectureref{conjecture} was proved in \cite{mp05}. Another case is that of varieties with Gorenstein quotient singularities, where \conjectureref{conjecture} is true as shown in \cite{bat99} for global quotients, and in general in \cite{ya04}. In the general case it was shown that stringy Hodge numbers are related to Hodge numbers in orbifold cohomology. For $\dim{X}=3$ and varieties with terminal isolated singularities of dimension 4 and 5, the answer is also positive. These results hold without the condition of the stringy $E$-function being a polynomial, as was proved in \cite{sv07}. Finally, more examples of classes of isolated singularities where \conjectureref{conjecture} is true can be found in \cite{sc12}.\\

In general, for a resolution of singularities $f:Y \to X$, $$h^{p,0}_{st}(X) = h^{p,0}(Y),$$ and this number is nonnegative. For the remaining stringy Hodge numbers we do not have such an interpretation, and a deeper discussion is needed. We start by looking at $h^{p,1}_{st}(X)$. Given a log-resolution $f:Y\to X$ with exceptional divisor $D$, these numbers have an easy description: $$h^{p,1}_{st}(X) = h^{p,1}(Y) - \sum{h^{p-1,0}(D_i)}$$ where the sum is over all the irreducible components $D_i$ of $D$. The first result does not require $E_{st}(X)$ to be a polynomial.
 
\begin{intro-theorem}\label{hp1} Let $X$ be a projective variety with Gorenstein canonical singularities. Then: $$h^{p,1}_{st}(X) \geq 0$$ for all $p$. 

\end{intro-theorem}

It is a quick application of the fact, proved in \cite{gkkp11}, that given a resolution of singularities with exceptional set $D$ of a variety with klt singularities, a $p$-form defined outside of $D$ extends across $D$ without acquiring any poles.\\

If $E_{st}(X)$ is a polynomial, then it must have degree $2n$, where $n=\dim(X)$; moreover $h_{st}^{p,q}(X) = h^{n-p,n-q}_{st}(X)$ and $h^{p,q}(X) =0$ if $p> n$ or $q> n$ (see Remark \ref{rmkhn}). Therefore, the only non-zero stringy Hodge numbers lie in a Hodge diamond. This symmetry reduces the conjecture to the upper half of the diamond. Thus, if $X$ is a threefold with polynomial stringy $E$-function, it suffices to show that $h^{1,1}_{st}(X)$ and $h^{2,1}_{st}(X)$ are nonnegative. As mentioned above, this was shown without assuming that $E_{st}(X)$ is a polynomial in \cite{sv07}. One consequence of \theoremref{hp1} is a new proof of this fact.\\

More can be said about the diamond for threefolds. As stated above, if $X$ has dimension 3 and has polynomial stringy $E$-function, then $$h^{2,2}_{st}(X) = h^{1,1}_{st}(X).$$ If it is not a polynomial, it can still be shown (\propositionref{h22st}) that: $$h^{2,2}_{st}(X) \geq h^{1,1}_{st}(X).$$ This follows from a new interpretation of $h^{2,2}_{st}(X)- h^{1,1}_{st}(X)$ we give, when $X$ is a threefold, in terms of the analytic local defect of a singularity. This notion was introduced by Kawamata \cite{kawamata88} and it plays an important role in the proof of the existence of a $\QQ$-factorialization of a threefold with terminal singularities. The divisors with discrepancy 1 over the singular points allowed him to conclude that this process ends. A careful analysis of the proof (see \propositionref{bm}) yields the result stated above.\\

 This result is going to be useful in the proof of \theoremref{main}, and this is one of the reasons we are interested in this level of generality. We obtain the following corollary for threefolds.

\begin{intro-corollary}\label{cor3folds} Let $X$ be a threefold with Gorenstein canonical singularities. Then $$h^{p,q}_{st}(X)\geq 0$$ if $p+q\leq 4$. 
\end{intro-corollary}

To study other stringy Hodge numbers when the singular locus of $X$ has high codimension, we follow a strategy similar to that of \cite{sv07}. There the authors used the fact that given a log-resolution of singularities of a variety with isolated singularities, the restriction map in higher cohomologies from the smooth variety to the exceptional divisor is surjective. This result admits a generalization to varieties with singular locus of higher dimension, which in turn leads to the following:

\begin{intro-theorem}\label{hp2s} Let $X$ be a projective variety with at most Gorenstein canonical singularities. Suppose that the singular locus has codimension $c$. Then $$h^{p,2}_{st}(X)\geq 0$$ for $p+2\leq c$.
\end{intro-theorem}

For instance, this shows that applying \theoremref{hp2s} to varieties with singular locus of dimension 1, we obtain that as soon as $\dim{X}\geq 5$ we have $$h^{2,2}_{st}(X)\geq 0.$$ However, it does not apply in general to fourfolds. Using \theoremref{hp1}, the last step in proving \conjectureref{conjecture} when $\dim(X)=4$ is showing the above inequality holds. This is what we focus on next.\\

We can assume $X$ has terminal singularities (see Remark \ref{rmkterminal}) in which case the singular locus has at most dimension 1. Even the case of the product of a threefold with terminal singularities and a smooth curve is not entirely obvious\footnote{It is however if the stringy $E$-function of the threefold is a polynomial.}: one needs \corollaryref{cor3folds} in order to check this inequality (see Section \ref{easyexamples}). It can also be seen that the techniques used for proving \theoremref{hp2s} do not work in general in dimension 4 (see Section \ref{fourfolds}) and therefore we need a new approach. To this end, the strategy is to compare $h^{2,2}_{st}(X)$ with $h^{2,2}_{st}(H)$, for a general hyperplane $H\subseteq X$. We obtain the result under certain conditions, as an application of \corollaryref{cor3folds}. \\

We define the following condition for a log-resolution of singularities $f: Y \to X$ which is an isomorphism outside of the singular locus of $X$:

\begin{align*} (*) &&
\parbox[c]{13cm}{  If $D\subseteq Y$ is the exceptional set of $f$, with irreducible components $D= \bigcup D_i$, $f\restr{D_i}$ has connected fibers, and for any irreducible component $B_{ij}\subseteq D_i\cap D_j$, $f\restr{B_{ij}}$ has connected fibers.}
\end{align*}

\begin{intro-theorem}\label{main}Let $X$ be a projective variety of dimension $4$ with Gorenstein terminal singularities. Suppose there exists a log-resolution of singularities $f:Y\to X$ which satisfies $(*)$.  Then: $$h^{2,2}_{st}(X)\geq 0,$$ and hence $h^{p,q}_{st}(X) \geq 0 $ for all $p+q\leq 4$. Moreover, if $E_{st}(X)$ is a polynomial, then \conjectureref{conjecture} holds for $X$. 

\end{intro-theorem}

The Decomposition Theorem (see \theoremref{dt} below) plays a central role in the proof of the theorem. We use the approach of de Cataldo and Migliorini \cite{cm05}, which, besides giving a decomposition of the cohomologies of $Y$ and $D_i$, provides many tools for understanding its interaction with the Hodge decomposition. This in turn allow us to get a simplified description of $h^{2,2}_{st}(X)$ which can be compared to $h^{2,2}_{st}(H)$, for example, if condition $(*)$ is satisfied. \\ 

Finally, in Section \ref{examples} we discuss a class of fourfolds with Gorenstein terminal singularities to which we can apply \theoremref{main}. Roughly speaking, these are fourfolds that satisfy a strong equisingularity condition with respect to generic hyperplane sections along their singular locus (see \definitionref{equisingular}). In addition, the terminal threefold singularities appearing as such hyperplane sections are required to have a special type of log-resolution (see \definitionref{controlledresolution}), and this class includes: \begin{itemize} \item $A_n$. \item $D_{2n+1}$. \item $E_6$. \end{itemize}
A typical example is the fourfold given by $$x_0x_1x_2 + x_5x_3^2 + x_5x_4^2 + x_3^3 + x_4^3 = 0 $$ in $\PP^5$, which is equisingular along three copies of $\PP^1$ given by $(x_i=x_j=x_3=x_4=0)$ for $\{i,j\}\subseteq\{0,1,2\}$, and the singularities of the hyperplane sections are of type $A_1$ if $x_5\neq 0$.  Other examples include extremal contractions of a smooth fourfold of type $(3,1)$ (that is, the exceptional set is a divisor and its image is a curve) with Gorenstein singularities.\\

If we make some assumptions on the topology of $X$, using the techniques of the proof of \theoremref{hp2s} we obtain the following corollary . It is a consequence of \propositionref{proph5}.
\begin{intro-corollary}Let $X$ be a fourfold with at most Gorestein terminal singularities. If $H^5(X)=0$, or equivalently $H^3(X_{reg}) =0$, then $$h^{2,2}(X)\geq 0.$$ If moreover $E_{st}(X)$ is a polynomial, \conjectureref{conjecture} is true for $X$.
\end{intro-corollary}

\medskip

\noindent
{\bf Acknowledgements.}
I am very grateful to my advisor Mihnea Popa for suggesting the problem, and for his constant support during the project. I am also grateful to Mircea Musta\c{t}\u{a}, János Kollár, Mark de Cataldo, Sándor Kovács, Yajnaseni Dutta, Akash Sengupta and Lei Wu for interesting discussions.

\section{Preliminaries}
\subsection{Stringy Hodge numbers}\label{preliminaries} In this section we give the definition of stringy Hodge numbers, and review some basic results about them. We use the definition given in \cite{batyrev98} in the case the variety is projective.\\

\subsubsection{} We say that a variety $X$ has Gorenstein canonical singularities if it is normal, and the following conditions are satisfied: $X$ has singularities of index 1, which means that $K_X$ is Cartier; and given a log-resolution of singularities $f: Y\to X$, where $D_1, \ldots , D_r$ are the irreducible components of the exceptional set, if we write $$K_Y - f^*(K_X) = \sum{c_iD_i},$$then $$\discrep(D_i, X) := c_i \geq 0$$ for all $i$. These numbers are integers and are usually referred as the discrepancy of $D_i$ with respect to $X$. The two conditions imply that $X$ is Gorenstein, and the second condition defines a variety with canonical singularities. \\
 
The following notation is useful: let $I=\{1, \ldots, r\}$, and for any subset $J\subseteq I$ we define $$\displaystyle D_J = \bigcap_{j\in J}{ D_j}$$ and in case $J=\emptyset$ we put $$D_{\emptyset} = Y.$$ 

\begin{definition}
Let $X$ be a projective variety with Gorenstein canonical singularities. The stringy $E$-function is defined as: $$\displaystyle E_{st}(X;u,v) = \sum_{J\subseteq I}{E(D_J;u,v)\prod_{j\in J}{\frac{uv - (uv)^{a_j +1}}{(uv)^{a_j+1}-1}}}$$ where $a_j = \discrep(D_j, X)$, and for a smooth projective variety $Z$, $$E(Z;u,v) = \sum{(-1)^{p+q}h^{p,q}(Z)u^pv^q}$$ is the Hodge-Deligne polynomial (or $E$-polynomial). We often write $E_{st}(X)$ instead of $E_{st}(X; u,v)$.  
\end{definition}

The key result from Batyrev is that the stringy $E$-function does not depend on the resolution as long as the exceptional set is a divisor and its components with nonzero discrepancy are normal crossings \cite[Theorem 3.4]{batyrev98}.\\

\subsubsection{}  For a projective variety $X$ with Gorenstein canonical singularities we can write: $$E_{st}(X) = \sum{b_{p,q}u^pv^q}$$ with $b_{p,q}\in\ZZ$, as it is a rational function.

\begin{definition}[Stringy Hodge numbers]\label{defstringy} The $(p,q)$-stringy Hodge number is defined as: $$h_{st}^{p,q}(X) = (-1)^{p+q}b_{p,q}.$$
\end{definition}

\begin{remark} In the original definition of stringy Hodge numbers given by Batyrev, it was required the stringy $E$-function to be a polynomial, but as stated above, the nonnegativity of the extended version is interesting from the point of view of birational geometry. 
\end{remark}

\begin{remark}\label{rmkhn} The following are some basic properties of the stringy $E$-function and stringy Hodge numbers that were proved by Batyrev \cite{batyrev98}:
\begin{nenumerate}\item If $X$ is smooth, then $E_{st}(X) = E(X)$ and $$h^{p,q}_{st}(X) = h^{p,q}(X).$$ 
\item The stringy $E$-function is symmetric with respect to its variables: $$E_{st}(X; u,v) = E_{st}(X; v,u),$$ and so $$h^{p,q}_{st}(X) = h^{q,p}_{st}(X).$$ 
\item\label{pd} A Poincaré duality kind of result \cite[Theorem 3.7]{batyrev98}: if $\dim{X} = n$, then $$E_{st}(X; u,v) = (uv)^nE_{st}(X; u^{-1}, v^{-1}).$$ In case $E_{st}(X)$ is a polynomial, this means it must have degree $2n$ and $$h_{st}^{p,q}(X) = h^{n-p,n-q}_{st}(X).$$ 

\end{nenumerate}

\end{remark}

The following is usually applied to make some simplifications:

\begin{theorem}[{\cite[Theorem 3.12]{batyrev98}}]\label{bat1} Assume that an algebraic variety with Gorenstein canonical singularities $X$ admits a projective birational morphism $f:Y\to X$ such that $f^*K_X = K_Y$. Then $E_{st}(X) = E_{st}(Y)$. In particular the stringy Hodge numbers are the same.

\end{theorem}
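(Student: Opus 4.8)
The plan is to exploit the fact, established in \cite[Theorem 3.4]{batyrev98}, that the stringy $E$-function does not depend on the chosen log-resolution; this lets me compute both $E_{st}(X)$ and $E_{st}(Y)$ from a single common resolution. Concretely, I would first fix a log-resolution $g\colon W \to Y$ of $Y$, chosen (after further blow-ups, by Hironaka) so that the composite $h = f\circ g\colon W \to X$ is itself a log-resolution of $X$, i.e.\ $W$ is smooth and the exceptional set of $h$ is a simple normal crossing divisor. Since $E_{st}(Y)$ may be computed from $g$ and $E_{st}(X)$ from $h$, everything reduces to comparing the two defining sums on the same space $W$.

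The key computation is the comparison of discrepancies. Writing $K_W = g^{\ast}K_Y + \sum_E a(E,Y)\,E$ over the $g$-exceptional divisors and using the hypothesis $\fu K_X = K_Y$, one gets $g^{\ast}K_Y = g^{\ast}\fu K_X = h^{\ast}K_X$, so that $K_W = h^{\ast}K_X + \sum_E a(E,Y)\,E$. Comparing this with $K_W = h^{\ast}K_X + \sum_{E'} a(E',X)\,E'$ over the $h$-exceptional divisors shows two things: for every $g$-exceptional divisor $E$ one has $a(E,X) = a(E,Y)$, while every remaining $h$-exceptional divisor---necessarily the strict transform of an $f$-exceptional divisor on $Y$---has discrepancy $a(E',X)=0$ with respect to $X$.

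Now I would invoke the crucial cancellation built into the defining formula: a divisor $D_j$ with discrepancy $a_j = 0$ contributes the factor $\frac{uv - (uv)^{a_j+1}}{(uv)^{a_j+1}-1} = 0$, so any subset $J$ containing such a component contributes nothing to the sum. In the sum computing $E_{st}(X)$ this kills precisely the terms indexed by subsets that involve a strict transform of an $f$-exceptional divisor; the surviving subsets are exactly those consisting of $g$-exceptional divisors, with $J = \emptyset$ giving $E(W;u,v)$ in both cases. For each such $J$ the stratum $D_J$ and the discrepancies $a_j$ agree with those appearing in the sum for $E_{st}(Y)$, so the two sums are term-by-term equal and $E_{st}(X) = E_{st}(Y)$. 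The equality of the stringy Hodge numbers is then immediate from \definitionref{defstringy}.

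The step I expect to be most delicate is the first one: arranging a single $W$ that is simultaneously a valid log-resolution for both $X$ and $Y$, while keeping careful track of which irreducible exceptional components belong to $g$ and which arise as strict transforms of $f$-exceptional divisors, together with verifying the simple-normal-crossing hypotheses required to apply \cite[Theorem 3.4]{batyrev98}. The discrepancy bookkeeping itself is clean once $\fu K_X = K_Y$ is pulled back along $g$, and it is the vanishing-factor observation that makes the crepant $f$-exceptional divisors invisible to the stringy $E$-function.
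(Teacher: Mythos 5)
Your argument is correct: the discrepancy bookkeeping on a common log-resolution $W$ (giving $a(E,X)=a(E,Y)$ for $g$-exceptional divisors and discrepancy $0$ for strict transforms of $f$-exceptional ones) together with the vanishing of the factor $\frac{uv-(uv)^{a_j+1}}{(uv)^{a_j+1}-1}$ at $a_j=0$ is exactly what makes the two defining sums coincide. The paper itself gives no proof, citing \cite[Theorem 3.12]{batyrev98} instead, and your proof is essentially Batyrev's original one, so there is nothing further to compare.
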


\begin{remark}\label{rmkterminal} For a variety $X$ with canonical singularities, there exists a crepant birational morphism $f:Y \to X$ such that $Y$ has at most terminal singularities \cite[Corollary 1.4.3]{bchm10}. This was proved in dimension 3 by Reid \cite[Main Theorem]{reid83}. Using \theoremref{bat1} we can assume that $X$ has at most terminal singularities.
\end{remark}

\subsubsection{}\label{d(p)} The following is useful for simplifying various expressions.
\begin{notation} Let $f: Y \to X$ be a log-resolution of singularities of $X$ and $\bigcup_{i\in I} D_i = D\subseteq Y$ be the reduced exceptional divisor. We denote: $$D(p) := \coprod_{\substack{J\subseteq I \\ |J|=p}}D_J.$$
\end{notation}

\begin{remark}\label{descriptionshn}  Let $X$ be a projective variety with at most Gorenstein terminal singularities. The following are explicit descriptions of some stringy Hodge numbers: 
 \begin{itemize}
	\item $h_{st}^{p,0}(X) = h^{p,0}(Y).$
	\item $h_{st}^{p,1}(X) = h^{p,1}(Y) -  h^{p-1,0}(D(1))$.
	\item $h_{st}^{p,2}(X) = \displaystyle h^{p,2}(Y) - h^{p-1,1}(D(1)) + h^{p-2,0}(D(2)) + \sum_{a_j = 1}{h^{p-2,0}(D_j)} $\\ where $a_j = \discrep(D_j, X)$. 
\end{itemize}
Note that as $D(p)$ is a disjoint union of smooth projective varieties, its cohomology spaces are direct sums of the cohomologies of its components. For example $H^k(D(1)) = \bigoplus H^k(D_j)$. 
\end{remark}

\begin{notation}\label{defa} There is always a piece of the summands of stringy Hodge numbers in which discrepancies do not show up. That is, $$h^{p,q}_{st}(X) = \sum{(-1)^kh^{p-k,q-k}(D(k)) } + \text{ extra terms}, $$ so it is convenient to define \begin{equation}\label{apq}a_{p,q}(X) :=\sum{(-1)^kh^{p-k,q-k}(D(k))}. \end{equation}
\end{notation}

\subsection{Background results in mixed Hodge theory and perverse sheaves}
This section contains most of the known facts that are used later in the paper, for easy reference.

\subsubsection{}\label{logcx} We start with some results about the logarithmic complex; see e.g. \cite[Section 8]{voisin1}. They are used in the proof of \theoremref{hp1} keeping the same notation.\\

Let $Y$ be a smooth projective variety and $D$ a simple normal crossings divisor in $Y$. 

\begin{definition}\label{weightfiltration} Let $\Omega_Y^p(\log{D})$ be the sheaf of  $p$-forms with logarithmic poles along $D$. The increasing weight filtration consists of subsheaves $$W_k\Omega_Y^p(\log{D}) \subseteq \Omega_Y^p(\log{D})$$ such that if $z_1, \ldots, z_n$ are local coordinates on an open set $V$, and $D$ is given by the equation $$z_1\cdots z_r=0,$$ then $W_k\Omega^p(\log{D})\restr{W}$ is a free $\shf{O}_W$ module generated by elements of the form $$\frac{dz_{i_1}}{z_{i_1}}\wedge \cdots \wedge \frac{dz_{i_s}}{z_{i_s}}\wedge dz_{j_1} \wedge \cdots \wedge dz_{j_{p-s}}$$ with $i_l\leq r$ and $s\leq k$.
\end{definition}

The sheaves of logarithmic poles form the logarithmic sequence $\Omega^{\bullet}_Y(\log{D})$, and one crucial result is the isomorphism 
\begin{equation}\label{logarthmichyp} H^k(Y\setminus D, \CC) \cong \HH^k(Y, \Omega^{\bullet}_Y(\log{D}))
\end{equation}
due to Deligne \cite{HodgeII}*{Proposition 3.1.8}.\\

The weight filtration on the sheaves induces one on the sequence, which in turn defines a weight filtration on the cohomology of $Y\setminus D$, and it is the same as the one discussed in Section \ref{mhs}. \\

Using the same notation as in Section \ref{d(p)} and letting $j_k: D(k) \into Y$ we have the following (see e.g. \cite[Proposition 8.32]{voisin1}):

\begin{proposition}\label{logarithmicquotient} There exists a natural isomorphism $$W_k\Omega^{p}_Y(\log{D}) / W_{k-1}\Omega^{p}_Y(\log{D}) \cong j_{k*}\Omega^{p-k}_{D(k)}.$$
\end{proposition}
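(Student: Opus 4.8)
Since both sides are coherent sheaves on $Y$ and the asserted isomorphism is to be natural, the plan is to produce an explicit sheaf morphism and verify it is an isomorphism by a local computation. Fix a chart $V$ with coordinates $z_1,\dots,z_n$ in which $D = \{z_1\cdots z_r = 0\}$. I would define the \emph{Poincar\'e residue} $\Res_k\colon W_k\Omega^p_Y(\log D)\to j_{k*}\Omega^{p-k}_{D(k)}$ as follows. By \definitionref{weightfiltration}, every local section of $W_k\Omega^p_Y(\log D)$ is an $\shf{O}_V$-combination of forms $g\,\tfrac{dz_{i_1}}{z_{i_1}}\wedge\cdots\wedge\tfrac{dz_{i_s}}{z_{i_s}}\wedge dz_{l_1}\wedge\cdots\wedge dz_{l_{p-s}}$ with $i_1<\cdots<i_s\leq r$ and $s\leq k$. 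For $J=\{i_1,\dots,i_k\}$ with $\abs{J}=k$, I would send the term carrying exactly these $k$ logarithmic factors to $g\restr{D_J}\,(dz_{l_1}\wedge\cdots\wedge dz_{l_{p-k}})\restr{D_J}\in\Omega^{p-k}_{D_J}$, send every term with $s<k$ to $0$, and sum over all $J$ with $\abs{J}=k$, using $j_{k*}\Omega^{p-k}_{D(k)}\cong\bigoplus_{\abs{J}=k}\Omega^{p-k}_{D_J}$ pushed forward to $Y$.

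The core step is to show that on $V$ this map is surjective with kernel exactly $W_{k-1}\Omega^p_Y(\log D)$. Surjectivity is immediate, since the restrictions $(dz_{l_1}\wedge\cdots\wedge dz_{l_{p-k}})\restr{D_J}$ with $\{l_1,\dots,l_{p-k}\}\subseteq\{1,\dots,n\}\setminus J$ freely generate $\Omega^{p-k}_{D_J}$ (the $z_l$ with $l\notin J$ restrict to coordinates on $D_J$) and lift tautologically. The inclusion $W_{k-1}\subseteq\ker\Res_k$ holds by construction. For the reverse inclusion I would take $\omega\in W_k$ with $\Res_k\omega=0$; all of its terms with fewer than $k$ logarithmic factors already lie in $W_{k-1}$, so it suffices to treat a group of terms sharing a fixed pole set $J$, say $\sum_{l_*}g_{l_*}\,\tfrac{dz_{i_1}}{z_{i_1}}\wedge\cdots\wedge\tfrac{dz_{i_k}}{z_{i_k}}\wedge dz_{l_*}$. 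Vanishing of its residue forces $g_{l_*}\restr{D_J}=0$ for each $l_*$, that is $g_{l_*}\in(z_{i_1},\dots,z_{i_k})$; then the identity $z_{i_m}\cdot\tfrac{dz_{i_m}}{z_{i_m}}=dz_{i_m}$ converts each such term into a form with at most $k-1$ logarithmic poles, hence into $W_{k-1}$. This is exactly the computation that makes the associated graded match $j_{k*}\Omega^{p-k}_{D(k)}$.

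The step I expect to be the main obstacle is well-definedness and gluing: that $\Res_k$ is independent of the chosen local defining equations $z_i$ and therefore patches to a global morphism. If $z_i$ is replaced by another local equation $u_iz_i$ for $D_i$ with $u_i$ a unit, then $\tfrac{d(u_iz_i)}{u_iz_i}=\tfrac{dz_i}{z_i}+\tfrac{du_i}{u_i}$, and the extra factor $\tfrac{du_i}{u_i}$ is holomorphic; wedging it in lowers the number of logarithmic poles, so all of the resulting cross terms land in $W_{k-1}$ and are annihilated by the induced map on the quotient. Hence the map $W_k/W_{k-1}\to j_{k*}\Omega^{p-k}_{D(k)}$ is intrinsic, independent of coordinates, and glues; combined with the local isomorphism above, this yields the desired isomorphism. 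Naturality in $(Y,D)$ then follows because the residue is characterized intrinsically by this pole-order and restriction recipe.
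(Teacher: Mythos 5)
The paper does not prove this statement; it is quoted directly from the literature (Voisin, \emph{Hodge Theory and Complex Algebraic Geometry I}, Proposition 8.32), and your argument via the Poincar\'e residue map --- local definition on the generators of $W_k\Omega^p_Y(\log D)$, the computation identifying the kernel with $W_{k-1}$, and the check that replacing $z_i$ by $u_iz_i$ only perturbs the map by terms in $W_{k-1}$ --- is precisely the standard proof given in that reference. Your proposal is correct and matches the cited source's approach.
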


\subsubsection{}\label{mhs} The cohomology spaces of an algebraic variety are endowed with a mixed Hodge structure functorial with respect to algebraic morphisms, as proved by Deligne. We state a few facts used in several parts of this paper (see e.g. \cite[Section 3]{elzeinetal}).\\

Let $X$ be an algebraic variety of dimension $n$. For simplicity all cohomologies are assumed to be with complex coefficients. There exists an increasing filtration $$\{0\}=W_{-1} \subseteq W_0 \subseteq \cdots \subseteq W_{2k}= H^k(X)$$ of vector subspaces called the weight filtration. The graded pieces are defined to be $${\rm Gr}^W_jH^k(X) = W_j/W_{j-1}.$$ There also exists a decreasing filtration $$ \{0\} = F^m \subseteq F^{m-1} \subseteq \cdots \subseteq F^0 = H^k(X)$$ of vector subspaces, called the Hodge filtration, which induces a pure Hodge structure of weight $j$ on ${\rm Gr}^W_jH^k(X)$. 

\begin{remark}\label{weights} The following are basic facts about the weights of the cohomology $H^k(X)$: 
\begin{renumerate} \item\label{mhssmooth} Suppose $X$ is a smooth variety. Then $W_{k-1} = {0}$, that is, it only has the ``upper weights".
\item\label{mhsproper} Suppose $X$ is a proper variety. Then $W_k = H^k(X)$, that is, it only has the ``lower weights".

\end{renumerate}
\end{remark}

\subsubsection{}\label{mhssnc} The mixed Hodge structure of a simple normal crossings variety can be described in a simple way. This computation is used, for example, in the proof of \theoremref{hp2s}. See e.g. \cite[Part II, 1]{elz83} and \cite[Section 3]{elzeinetal}.\\

Let $D$ be a simple normal crossings variety and $H^k\big(D(r)\big) \stackrel{\delta_r}{\longrightarrow} H^k\big(D(r+1)\big)$ be the alternating sum of the pullbacks of the natural inclusions $D(r+1) \into D(r)$. We get the complex
$$ 0 \longrightarrow H^k\big(D(1)\big) \stackrel{\delta_1}{\longrightarrow} H^k\big(D(2)\big) \stackrel{\delta_2}{\longrightarrow} \cdots \stackrel{\delta_l}{\longrightarrow} 
H^k\big(D(l+1)\big) \stackrel{\delta_{l+1}}{\longrightarrow} \cdots,$$
in which all cohomologies have $\CC$-coefficients. The weight $k$ piece of the mixed Hodge structure on the cohomology of $D$ are the cohomologies of this complex. More precisely, we have  \begin{equation}\label{snchp}{\rm Gr}^W_k H^{k+l}(D) = \ker\delta_{l+1}/\im{\delta_l}.\end{equation}
The Hodge space $H^{p,q}\big({\rm Gr}^W_k H^{k+l}(D)\big)$ is obtained by applying first $H^{p,q}$ to the complex and then taking the cohomologies.

\subsubsection{} The Decomposition Theorem and related results are used in the proof of \theoremref{main}. We gather them following the notation of de Cataldo and Migliorini \cite{cm05}.\\

For an algebraic variety $X$ we denote ${\rm D}(X)$ the category of constructible complexes, and by $\Perv(X)$ the abelian subcategory of perverse sheaves, together with the perverse cohomology functors $$^{\pp}\cohH^k : {\rm D}(X) \to \Perv(X).$$
The simple objects of $\Perv(X)$ are the intersection cohomology complexes $IC_X(L)$, where $L$ is a local system in $U\subseteq X_{reg}$ . An explicit definition can be found in \cite[Proposition 8.2.11]{hottaetal}. When the local system $L= \CC_U$, we simply denote the intersection cohomology complex as $IC_X$.\\

\begin{theorem}[Decomposition theorem]\label{dt} Let $f: Y \to X$ be a projective morphism from a smooth variety $Y$ of dimension $n$. Then: $$\derR f_*\CC_Y[n] \cong \bigoplus{^{\mathfrak{p}}\mathcal{H}^k(\derR f_*\CC_Y[n])[-k]} $$ in ${\rm D}(X)$. Moreover, each $^{\mathfrak{p}}\mathcal{H}^k(\derR f_*\CC_Y[n])$ can be decomposed into intersection complexes of the strata. 
\end{theorem}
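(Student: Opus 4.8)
The statement is the full Decomposition Theorem, so ``proving'' it means reconstructing one of the known arguments; the plan is to follow the purely Hodge-theoretic route of de Cataldo and Migliorini \cite{cm05}, rather than the $\ell$-adic weight argument of Beilinson--Bernstein--Deligne--Gabber or Saito's mixed Hodge module machinery. The first move is to package \theoremref{dt} together with two companion statements that are really proved at the same time: \emph{relative hard Lefschetz}, asserting that cup product with a relatively ample class $\eta$ induces isomorphisms $\eta^k \colon {}^{\pp}\cohH^{-k}(\derR f_*\CC_Y\decal{n}) \xrightarrow{\sim} {}^{\pp}\cohH^{k}(\derR f_*\CC_Y\decal{n})$ for all $k$, and \emph{semisimplicity}, asserting that each ${}^{\pp}\cohH^k$ is a direct sum of intersection complexes of the strata with semisimple local-system coefficients. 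The splitting in \theoremref{dt} is then formal: granted relative hard Lefschetz, Deligne's degeneration criterion yields $\derR f_*\CC_Y\decal{n}\cong\bigoplus_k {}^{\pp}\cohH^k(\derR f_*\CC_Y\decal{n})\decal{-k}$, so all the content is concentrated in the other two statements.

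Second, I would set up the two Lefschetz operators that govern the argument: the global operator $L$ given by cup product with an ample class on $Y$, and the relative operator $\eta$ pulled back from a projective embedding of the image. The key geometric input is a description of the perverse (Leray) filtration on $H^*(Y,\CC)$ by means of flags of generic hyperplane sections; this is exactly what lets Hodge theory be brought to bear, since one can then show that the graded pieces of the filtration carry pure Hodge structures on which $L$ and $\eta$ act compatibly, generating an $\sltwo\times\sltwo$ Lefschetz-module structure. The argument proceeds by induction on $\dim Y$: one cuts by a generic hyperplane section $Y'=Y\cap H$, sets $f'=f\restr{Y'}$, and relates the perverse cohomology sheaves of $f$ and $f'$ through the Gysin and restriction sequences attached to $L$, feeding in the classical hard Lefschetz theorem on $Y$ and $Y'$ together with the inductive hypothesis applied to $f'$.

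The heart of the matter, and the step I expect to be the main obstacle, is the positivity. On each graded piece of the perverse filtration one must establish the Hodge--Riemann bilinear relations for the polarizing form assembled from $L$, $\eta$, and the intersection pairing on $Y$. Definiteness of this form forces the relevant $\eta$-powers to be injective, hence, by a dimension count, isomorphisms, which is precisely relative hard Lefschetz. The same positivity delivers semisimplicity: for each stratum $S$ the \emph{refined intersection form} on the stalk cohomology of ${}^{\pp}\cohH^k$ at a point of $S$ is shown to be nondegenerate, and nondegeneracy is exactly the condition under which the intersection complex $IC_{\cl{S}}$ splits off as a direct summand. Iterating over the strata in order of increasing dimension decomposes each ${}^{\pp}\cohH^k$ into intersection complexes, which is the last assertion of \theoremref{dt}.

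Finally, I would flag the two delicate points inside this scheme. A single hyperplane section does not see all of the perverse cohomology, so one has to run a secondary induction on the defect of semismallness of $f$ to control the pieces that become invisible after cutting; this is where the coupling of $L$ and $\eta$ is genuinely used. Moreover, every Hodge structure and polarization must be shown to be compatible with the geometric maps, namely the restriction, Gysin, and boundary maps of the hyperplane flags, so that the Hodge--Riemann relations propagate correctly through the induction. Verifying this compatibility, rather than any single isomorphism, is the technically heaviest part of the proof.
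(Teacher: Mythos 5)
This statement is a quoted background result (the Decomposition Theorem of Beilinson--Bernstein--Deligne--Gabber), and the paper gives no proof of it---it is stated for reference and used as a black box, with \cite{cm05} as the working source. Your outline faithfully reproduces the architecture of the de Cataldo--Migliorini Hodge-theoretic proof that the paper relies on: the reduction of the splitting to relative hard Lefschetz via Deligne's degeneration criterion, the hyperplane-flag description of the perverse filtration, the induction on dimension coupled with a secondary induction on the defect of semismallness, and the derivation of semisimplicity from the Hodge--Riemann relations through nondegeneracy of the refined intersection forms on the strata. As a sketch it is accurate and correctly locates the hard steps, though of course the positivity and compatibility arguments are only named rather than carried out.
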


A direct consequence of the theorem is that we get a direct sum decomposition of the cohomology spaces of $Y$, by taking hypercohomology functors. More will be said about this decomposition in Section \ref{perversecoh}. \\

The next results are used when comparing the direct sum decomposition of the cohomologies of $Y$ to the one we get on certain subvarieties, by applying the theorem to the restriction map. They work in general for a normally nonsingular inclusion, but its definition is rather technical and can be found in \cite[Section 3.5]{cm05}. Good examples are subvarieties of the ambient projective space of a projective variety, intersecting all of the strata (of a given stratification) transversely. The example we are interested in is a general hyperplane $H\subseteq X$.

\begin{lemma}[{\cite[Lemma 4.3.8]{cm05}}]\label{lemmahyp} Let 
\begin{equation}\begin{tikzcd}
{H'}^m \arrow[r, "v"] \arrow[d, "f'"]
& Y^n \arrow[d, "f"] \\
H \arrow[r, "u"]
& X
\end{tikzcd}\end{equation}
be a Cartesian diagram of maps of algebraic varieties of the indicated dimensions and $f$ proper. Assume that $u$ is a normally nonsingular inclusion. Then $$^{\pp}\cohH^i(\derR f'_*\CC_{H'}[m])\cong u^{*\pp}\cohH^i(\derR f_*\CC_Y[n])[m-n]$$ for every $i\in\ZZ$ and the natural map $$v^*: H^k(Y) \to H^k(H')$$ is compatible with the direct sum decomposition in perverse cohomology groups and it is strict.

\end{lemma}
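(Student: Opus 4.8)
The plan is to deduce both assertions from two inputs: proper base change for the Cartesian square (available because $f$ is proper), and the perverse $t$-exactness of the shifted restriction $u^*[m-n] = u^*[-c]$, where $c := n-m$ is the codimension of the normally nonsingular inclusion $u$. The second input is the only genuinely geometric step, so I would establish it first. Choose a Whitney stratification of $X$ to which $\derR f_*\CC_Y[n]$ is adapted; normal nonsingularity means $u$ is transverse to it, and $u^! \cong u^*[-2c](-c)$. For a complex $\mathcal{F}$ constructible with respect to such a stratification one has $\cohH^{j}(u^*\mathcal{F}) = \cohH^{j}(\mathcal{F})\restr{H}$, and transversality forces $\dim\bigl(\Supp\cohH^{j}(\mathcal{F})\cap H\bigr) = \dim\Supp\cohH^{j}(\mathcal{F}) - c$. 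Feeding this into the middle-perversity support condition shows $u^*[-c]$ preserves the support condition, and applying Verdier duality (via $\DD_H u^*[-c] \cong u^*[-c]\DD_X(-c)$) gives the cosupport condition. Hence $u^*[-c]\colon {\rm D}(X)\to{\rm D}(H)$ is perverse $t$-exact, so it commutes with ${}^{\pp}\cohH^{i}$ and with the perverse truncations ${}^{\pp}\tau_{\le i}$. The same holds for $v$, which is again normally nonsingular of codimension $c$ with $H'$ smooth.

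For the first assertion, proper base change gives $u^*\derR f_*\CC_Y[n]\cong \derR f'_*v^*\CC_Y[n] = \derR f'_*\CC_{H'}[n] = (\derR f'_*\CC_{H'}[m])[n-m]$, that is, $\derR f'_*\CC_{H'}[m]\cong (u^*[-c])\,\derR f_*\CC_Y[n]$. Applying ${}^{\pp}\cohH^{i}$ and using that the $t$-exact functor $u^*[-c]$ commutes with perverse cohomology yields ${}^{\pp}\cohH^{i}(\derR f'_*\CC_{H'}[m]) \cong u^*\,{}^{\pp}\cohH^{i}(\derR f_*\CC_Y[n])[m-n]$, which is exactly the claimed isomorphism.

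For the second assertion, recall that the Decomposition Theorem (\theoremref{dt}) applied to $f$ and to $f'$ splits $\derR f_*\CC_Y[n]$ and $\derR f'_*\CC_{H'}[m]$ into shifted perverse cohomology sheaves; taking hypercohomology produces the perverse Leray filtrations $P_\bullet$ on $H^k(Y)$ and $H^k(H')$. The restriction $v^*$ is induced on hypercohomology by $\derR f_*$ of the adjunction unit $\CC_Y\to \derR v_*\CC_{H'}$, so by functoriality of the perverse truncation it is automatically compatible with these filtrations. Strictness is where Part 1 re-enters: since $u^*[-c]$ is $t$-exact it commutes with ${}^{\pp}\tau_{\le i}$, and combined with proper base change this shows $u^*[-c]$ carries ${}^{\pp}\tau_{\le i}\,\derR f_*\CC_Y[n]$ isomorphically onto ${}^{\pp}\tau_{\le i}\,\derR f'_*\CC_{H'}[m]$. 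Consequently $v^*$ identifies the $i$-th filtration step with its image, $v^*(P_iH^k(Y)) = P_iH^k(H')\cap\im v^*$; as the Decomposition Theorem splits both filtrations, this equality on each step is precisely strictness, and the induced map on graded pieces exhibits $v^*$ as a direct sum of maps between the corresponding perverse summands.

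The main obstacle is the first input, namely the perverse $t$-exactness of $u^*[-c]$: this is where normal nonsingularity, in the guise of transversality to the stratification, is indispensable and where the normalizing shift $m-n$ is forced. Once that is in place, the remainder is formal bookkeeping with proper base change and the commutation of $t$-exact functors with ${}^{\pp}\cohH^{i}$ and ${}^{\pp}\tau_{\le i}$, the only real care being the tracking of the shifts $[n]$, $[m]$, and $[m-n]=[-c]$ in the two normalizations.
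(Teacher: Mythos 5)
The paper offers no proof of this statement: it is imported verbatim as a background result from de Cataldo--Migliorini \cite{cm05}*{Lemma 4.3.8}, so there is nothing internal to compare against. Your argument is essentially the standard one, and essentially the one in that reference: proper base change identifies $\derR f'_*\CC_{H'}[m]$ with $u^*(\derR f_*\CC_Y[n])[m-n]$, and the perverse $t$-exactness of $u^*[-c]$ for a normally nonsingular inclusion of codimension $c=n-m$ (support condition via transversality to an adapted stratification, cosupport via $u^!\cong u^*[-2c]$ and duality) then yields the isomorphism of perverse cohomologies and the compatibility of $v^*$ with the perverse Leray filtrations. The shift bookkeeping is correct throughout.

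The one place where the write-up is thinner than it should be is strictness. From the isomorphism $u^*({}^{\pp}\tau_{\le i}\derR f_*\CC_Y[n])[-c]\cong{}^{\pp}\tau_{\le i}(\derR f'_*\CC_{H'}[m])$ and the commutative square of hypercohomologies one only gets the containment $v^*(P_iH^k(Y))\subseteq P_iH^k(H')\cap\im v^*$; the reverse containment is \emph{not} formal (a filtered map between split filtered spaces need not be strict --- consider the identity of a one-dimensional space with the filtration jumping at different indices on source and target). What actually forces equality is the observation you relegate to your last clause: choose a splitting of $\derR f_*\CC_Y[n]$ as in \theoremref{dt}, transport it by the $t$-exact functor $u^*[-c]$ to a splitting of $\derR f'_*\CC_{H'}[m]$, and note that the adjunction map $K\to u_*u^*K$ inducing $v^*$ is then a direct sum of maps of summands; a direct sum of maps of graded pieces is strict for the associated split filtrations. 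So the ingredient is present in your proposal, but the logical order should be reversed: strictness follows from the compatible splittings, not from the isomorphism of truncations alone.
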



\begin{proposition}[{\cite[Proposition 4.7.7]{cm05}}]\label{prophyp} Let $P\in \Perv(X)$. Then the natural map $$u^*: \HH^j(X,P) \to \HH^j(H, u^*P)$$ is an isomorphism for $j\leq -2$ and injective for $j=1$.
\end{proposition}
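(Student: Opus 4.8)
The plan is to realize $u^{\ast}$ as one of the maps in the long exact sequence attached to the decomposition of $X$ into the closed subset $H$ and its open complement, and then to read off the vanishing from Artin's theorem for perverse sheaves on affine varieties. Write $w\colon U \into X$ for the inclusion of the open complement $U := X \setminus H$. Since $H$ is a hyperplane section for some projective embedding $X \subseteq \PP^N$, the complement $U = X \cap (\PP^N \setminus H)$ is a closed subvariety of the affine space $\PP^N \setminus H \cong \mathbb{A}^N$, hence affine; this is the geometric input that makes the whole argument run.

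First I would record the standard attaching triangle $w_! w^{\ast} P \to P \to u_{\ast} u^{\ast} P \xrightarrow{+1}$ in ${\rm D}(X)$, in which the second arrow is the adjunction morphism whose image under $\derR\Gamma(X,\argbl)$ is exactly the restriction map $u^{\ast}$ of the statement. Applying $\derR\Gamma(X,\argbl)$ and using that $X$ is projective, so that $\derR\Gamma(X,\argbl) = \derR\Gamma_c(X,\argbl)$ and $\derR\Gamma(X, w_!(\argbl)) = \derR\Gamma_c(U, \argbl)$, I obtain the long exact sequence
\[
\cdots \to \HH^j_c(U, w^{\ast}P) \to \HH^j(X, P) \xrightarrow{\,u^{\ast}\,} \HH^j(H, u^{\ast}P) \to \HH^{j+1}_c(U, w^{\ast}P) \to \cdots,
\]
where I have also used $\HH^j(X, u_{\ast} u^{\ast}P) = \HH^j(H, u^{\ast}P)$, valid because $u$ is a closed immersion.

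The key point is then Artin vanishing. As $w$ is an open immersion, $w^{\ast}P$ is again perverse on $U$, and $U$ is affine, so the affine vanishing theorem for the perverse $t$-structure gives $\HH^j_c(U, w^{\ast}P) = 0$ for every $j < 0$. Feeding this into the long exact sequence, for $j \leq -2$ both neighbouring terms $\HH^j_c(U, w^{\ast}P)$ and $\HH^{j+1}_c(U, w^{\ast}P)$ vanish, so $u^{\ast}$ is an isomorphism; for $j = -1$ only the term $\HH^{-1}_c(U, w^{\ast}P)$ to the left vanishes, which forces $u^{\ast}$ to be injective. This is precisely the boundary case of the vanishing range (so the injectivity is at $j=-1$).

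The argument is short, so I do not expect a serious obstacle; the only places needing care are the identification of $\HH^{\ast}(X, w_!(\argbl))$ with compactly supported hypercohomology on $U$, which is where properness of $X$ is used, and the observation that $w^{\ast}P$ stays perverse so that the affine vanishing applies. The one external ingredient I would cite is the perverse form of Artin's vanishing theorem; everything else is formal manipulation of the recollement triangle.
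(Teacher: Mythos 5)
Your argument is correct and is essentially the paper's: the paper states this result as a citation of \cite[Proposition 4.7.7]{cm05} without reproving it, and the proof there is exactly your recollement triangle $w_!w^{\ast}P \to P \to u_{\ast}u^{\ast}P$ combined with Artin vanishing for perverse sheaves on the affine complement $X\setminus H$. Note also that your computation correctly places the injectivity at $j=-1$; the ``$j=1$'' in the statement is a typo for $j=-1$, as confirmed by the fact that the paper later invokes the proposition precisely to get injectivity of $\HH^{-1}(X,\,{}^{\pp}\cohH^{-1}(\derR f_*\CC_Y[4])) \to \HH^{-1}(H,\,u^{*\pp}\cohH^{-1}(\derR f_*\CC_Y[4]))$.
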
 

\subsubsection{}\label{perversecoh} We now consider perverse cohomology groups.

 \begin{definition}\label{perverse cohomology} Using the same notation as in \theoremref{dt} we define the subspaces: $$H^{n+l}_b(Y):= \HH^{l-b}(X,\ ^{\pp}\cohH^b(\derR f_*\CC_Y[n])).$$
\end{definition}

We have a direct sum decomposition $$H^k(Y) =\bigoplus H^k_b(Y)$$ which can be made into a decomposition by Hodge substructures \cite[Remark 2.1.6]{cm05}. Understanding these spaces is the main task in the proof of \theoremref{main} and one of the tools we use is:

\begin{theorem}[Global invariant cycle theorem]\label{gict} Suppose $f: Z \to U$ is a smooth projective map and let $\bar{Z}$ be a smooth compactification of $Z$. Then for $x\in U$, $$H^0(U, R^kf_*\CC_Z) = H^k(f^{-1}(x))^{\pi_1(U,x)} = \im\{H^k(\bar{Z}) \to H^k(f^{-1}(x))\}.$$

\end{theorem}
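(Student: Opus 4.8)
The plan is to establish the two displayed equalities separately. The first, $H^0(U,R^kf_*\CC_Z)=H^k(f^{-1}(x))^{\pi_1(U,x)}$, is a formal fact about local systems. Since $f: Z\to U$ is a smooth projective morphism, Ehresmann's fibration theorem makes it a locally trivial $C^\infty$ bundle, so each higher direct image $R^kf_*\CC_Z$ is a local system on $U$ with stalk $H^k(f^{-1}(x))$ at $x$ and monodromy given by the natural $\pi_1(U,x)$-action on the cohomology of the fiber. The global sections of a local system are exactly the monodromy-invariant vectors in a stalk, which yields the first equality.

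For the second equality I would first factor the restriction map as $H^k(\bar Z)\to H^k(Z)\to H^k(f^{-1}(x))$, where the first arrow restricts to the open subset $Z$ and the second restricts to the fiber. I would dispose of the open variety $Z$ first: by Deligne's theorem the Leray spectral sequence $E_2^{p,q}=H^p(U,R^qf_*\CC_Z)\Rightarrow H^{p+q}(Z)$ of the smooth projective morphism $f$ degenerates at $E_2$. Degeneration forces the edge homomorphism $H^k(Z)\to E_2^{0,k}=H^0(U,R^kf_*\CC_Z)$ to be surjective, and under the identification of the first paragraph this edge map is precisely restriction to the fiber landing in the invariants. Hence the image of $H^k(Z)\to H^k(f^{-1}(x))$ is exactly $H^k(f^{-1}(x))^{\pi_1(U,x)}$.

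It then remains to show that the image of $H^k(\bar Z)\to H^k(f^{-1}(x))$ equals the image of $H^k(Z)\to H^k(f^{-1}(x))$, and for this I would invoke the theory of weights; all three restriction maps are morphisms of mixed Hodge structures. Since $\bar Z$ is smooth and proper, $H^k(\bar Z)$ is pure of weight $k$ (Remark~\ref{weights}), and a standard consequence of the weight spectral sequence for the smooth variety $Z$ is that the image of $H^k(\bar Z)\to H^k(Z)$ is the lowest-weight part $W_kH^k(Z)$, which by Remark~\ref{weights}\ref{mhssmooth} equals ${\rm Gr}^W_kH^k(Z)$. On the other hand the fiber $f^{-1}(x)$ is smooth and proper, so $H^k(f^{-1}(x))$ is pure of weight $k$; because morphisms of mixed Hodge structures are strict for the weight filtration, the map $H^k(Z)\to H^k(f^{-1}(x))$ annihilates everything of weight $>k$ and thus has the same image as its restriction to $W_kH^k(Z)$. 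Combining these two facts, the image of $H^k(\bar Z)$ coincides with the image of $H^k(Z)$ inside the pure structure $H^k(f^{-1}(x))$, completing the chain of equalities.

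The two essential inputs are Deligne's $E_2$-degeneration of the Leray spectral sequence for a smooth projective morphism and the strictness of morphisms of mixed Hodge structures with respect to the weight filtration. I expect the weight argument in the last step to be the conceptual crux: it is exactly the passage from the noncompact total space $Z$ to its compactification $\bar Z$ that makes the statement nontrivial, whereas the local-system identification and the spectral-sequence degeneration are comparatively formal.
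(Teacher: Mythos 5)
Your argument is correct; it is in fact the standard proof of Deligne's global invariant cycle theorem (Hodge II, Th\'eor\`eme 4.1.1; see also Voisin's book, where the proof is organized exactly as you do). The paper itself does not prove this statement at all: it is quoted in the preliminaries as a known background tool, so there is no in-paper argument to compare against. Your three ingredients are the right ones and are assembled correctly: the identification of $H^0(U,R^kf_*\CC_Z)$ with the monodromy invariants is formal once Ehresmann gives local constancy; Deligne's $E_2$-degeneration of the Leray spectral sequence makes the edge map $H^k(Z)\to E_2^{0,k}$ surjective, identifying $\im\{H^k(Z)\to H^k(f^{-1}(x))\}$ with the invariants; and the passage from $Z$ to $\bar Z$ follows from $\im\{H^k(\bar Z)\to H^k(Z)\}=W_kH^k(Z)$ together with strictness of the weight filtration and the purity of $H^k(f^{-1}(x))$. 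The only point worth flagging is that the equality $\im\{H^k(\bar Z)\to H^k(Z)\}=W_kH^k(Z)$ is usually first proved for a compactification with simple normal crossing boundary and then deduced for an arbitrary smooth compactification by dominating it with one; you may want to say a word about that, but it is standard and does not affect the validity of the proof.
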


Finally, we have the following analogues of classical results in Hodge theory \cite[Theorem 2.1.4, Theorem 2.2.3]{cm05}:

\begin{theorem}[Hard Lefschetz Theorem for Perverse cohomology groups]\label{hlpc} Let $k\geq 0$ and $b,j \in \ZZ$. Then the following cup product maps are isomorphisms: 
\begin{renumerate}\item\label{hla} $\eta^k: H^j_{-k}(Y) \cong H^{j+2k}_{k}(Y)$ where $\eta\in H^2(Y)$ is an ample class.
\item\label{hlpa} $L^k: H^{n+b-k}_b(Y) \cong H^{n+b+k}_{b}(Y)$ where $L=f^*A\in H^2(Y)$ for an ample class $A\in H^2(X)$.
\end{renumerate}

\end{theorem}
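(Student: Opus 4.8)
Both parts of \theoremref{hlpc} are hard Lefschetz statements living inside the perverse filtration attached to $f$, so the plan is to deduce them from \theoremref{dt} together with the classical hard Lefschetz theorem, organizing the argument around the two commuting Lefschetz operators: the ample class $\eta\in H^2(Y)$ on the source and the pulled-back class $L=f^\ast A$ coming from the base. The key bookkeeping is that, by \definitionref{perverse cohomology}, part \ref{hla} is the statement obtained by applying $\HH^{j+k-n}(X,-)$ to a map of perverse sheaves, while part \ref{hlpa} is, after comparing indices, the assertion that $L^k\colon \HH^{-k}(X,{}^{\pp}\cohH^b(\derR f_\ast\CC_Y[n]))\cong\HH^{k}(X,{}^{\pp}\cohH^b(\derR f_\ast\CC_Y[n]))$.

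I would dispose of \ref{hlpa} first, as it is the more formal of the two. By \theoremref{dt} each perverse cohomology sheaf ${}^{\pp}\cohH^b(\derR f_\ast\CC_Y[n])$ splits as a direct sum of shifted intersection complexes $IC_{\overline{S}}(L_S)$ supported on closures of strata. Applying the hypercohomology functor turns the relevant groups into intersection cohomology of the projective varieties $\overline{S}$ with coefficients in the $L_S$, and $L=f^\ast A$ restricts to (a positive multiple of) an ample class on each $\overline{S}$. Hard Lefschetz for intersection cohomology of a projective variety with an ample class — which I would cite from the theory of pure Hodge modules, or from \cite{cm05} itself — then gives that $L^k$ is an isomorphism on each summand, hence on all of $H^{n+b-k}_b(Y)$. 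The only point to verify is that cup product with $L$ respects both the decomposition into perverse pieces and the splitting into $IC$ summands; this follows from naturality of the cup product together with the fact that $L$ is pulled back from $X$.

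The real content is \ref{hla}, the relative hard Lefschetz theorem, namely that cup product with $\eta$ induces isomorphisms of perverse sheaves
$$\eta^k\colon {}^{\pp}\cohH^{-k}(\derR f_\ast\CC_Y[n]) \xrightarrow{\ \sim\ } {}^{\pp}\cohH^{k}(\derR f_\ast\CC_Y[n]),$$
from which \ref{hla} follows by applying $\HH^{j+k-n}(X,-)$. I would prove this following the Hodge-theoretic induction of de Cataldo and Migliorini: set up a Lefschetz pencil on $Y$ and induct on $\dim X$, using a generic hyperplane section to relate the perverse filtration of $f$ to that of its restriction — this is exactly where \lemmaref{lemmahyp} and \propositionref{prophyp} enter, controlling the perverse pieces under the normally nonsingular inclusion. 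The heart of the matter is to establish the Hodge--Riemann bilinear relations on the graded pieces of the perverse filtration; once the relevant cup-product pairing is shown to be a polarization, its nondegeneracy forces the primitive Lefschetz decomposition and in particular makes $\eta^k$ an isomorphism.

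The main obstacle is precisely \ref{hla}: unlike \ref{hlpa}, it is not a formal consequence of the decomposition theorem but requires a genuine positivity input. The delicate steps are keeping the perverse filtration under control when passing to hyperplane sections, and verifying that the inductively constructed pairing remains positive on the primitive spaces; managing the interaction of the two operators $\eta$ and $L$ and the compatibility of the resulting bigrading with the Hodge structure is what makes the argument substantial. For the purposes of this paper I would simply invoke \cite{cm05}, where both isomorphisms are established in exactly the stated form.
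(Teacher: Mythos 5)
The paper gives no proof of this statement: it is quoted as a background result directly from de Cataldo--Migliorini \cite{cm05} (their Theorems 2.1.4 and 2.2.3), which is exactly where your argument ends up. Your sketch of the underlying content --- part (ii) reducing via the decomposition theorem to Hard Lefschetz for intersection cohomology of the strata closures, part (i) being the genuinely deep relative Hard Lefschetz established by the Hodge-theoretic induction in \cite{cm05} --- is accurate, and your index bookkeeping checks out, so the proposal is correct and consistent with the paper's treatment.
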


\begin{theorem}[Weak Lefschetz Theorem for Intersection Cohomology]\label{wlt} Let $u: H\into X$ be a general hyperplane section of a projective variety. Then $$u^*: IH^j(X) \to IH^j(H)$$ is an isomorphism for $j\leq \dim{X} -2$ and injective for $j= \dim{X} -1$.
\end{theorem}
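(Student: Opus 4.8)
The plan is to reduce the statement to \propositionref{prophyp}, the Lefschetz hyperplane theorem for the hypercohomology of an arbitrary perverse sheaf, applied to the intersection complex $P = IC_X$. Set $n = \dim X$. With the perverse normalization used here, $IC_X \in \Perv(X)$ restricts to $\CC_{X_{reg}}[n]$ on the regular locus, and intersection cohomology is recovered as the shifted hypercohomology $IH^j(X) = \HH^{j-n}(X, IC_X)$; likewise $IH^j(H) = \HH^{j-(n-1)}(H, IC_H)$, since $\dim H = n-1$.

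The first step is to identify the pullback of the intersection complex. Because $H$ is a general hyperplane section, $u \colon H \into X$ is a normally nonsingular inclusion of codimension $1$, transverse to a stratification adapted to $IC_X$, and I expect the identification
$$ u^* IC_X \cong IC_H[1]. $$
On the regular locus this reads $u^*(\CC_{X_{reg}}[n]) = \CC_{H_{reg}}[n] = (IC_H[1])\big\vert_{H_{reg}}$, and the support and cosupport conditions characterizing $IC_H[1]$ should follow from those of $IC_X$ by transversality of $H$ to the strata (this is precisely the normally nonsingular restriction behaviour underlying \lemmaref{lemmahyp}). Granting this, the restriction map $u^*\colon IH^j(X)\to IH^j(H)$ becomes
$$ \HH^{j-n}(X, IC_X) \longrightarrow \HH^{j-n}(H, u^*IC_X) = \HH^{j-n}(H, IC_H[1]) = \HH^{j-n+1}(H, IC_H) = IH^j(H). $$

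The second step is to invoke \propositionref{prophyp} with $P = IC_X$ in cohomological degree $k = j - n$. It yields an isomorphism for $k \leq -2$ and injectivity in the boundary degree $k = -1$. Substituting $k = j - n$, the map $u^*$ is an isomorphism for $j \leq n - 2 = \dim X - 2$ and injective for $j = n - 1 = \dim X - 1$, which is exactly the assertion. The main obstacle is the first step: one must know that a general hyperplane is genuinely a normally nonsingular inclusion, transverse to a Whitney stratification for which $IC_X$ is constructible, so that $u^*$ carries the intersection complex of $X$ to that of $H$ up to the shift by the codimension. Once this compatibility is secured via the normally nonsingular machinery of \cite{cm05}, the remaining content is the bookkeeping of shifts feeding into \propositionref{prophyp}.
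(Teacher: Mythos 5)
Your derivation is correct, but note that the paper does not prove this statement at all: it is quoted as a known result from \cite{cm05} (alongside \theoremref{hlpc} and \theoremref{hlt}), so there is no in-paper argument to compare against. Your route --- identify $IH^j(X)=\HH^{j-n}(X,IC_X)$, use that a general hyperplane is a normally nonsingular inclusion transverse to a stratification adapted to $IC_X$ so that $u^*IC_X\cong IC_H[1]$, and then feed $P=IC_X$ into \propositionref{prophyp} --- is exactly the standard deduction, and indeed the one underlying de Cataldo--Migliorini's own treatment; the shift bookkeeping ($k=j-n$, iso for $k\le -2$, injective for $k=-1$) checks out. One caveat: \propositionref{prophyp} as transcribed in the paper says ``injective for $j=1$,'' which is a typo for $j=-1$ (the correct statement of \cite[Proposition 4.7.7]{cm05}, coming from Artin vanishing of $\HH^k_c(X\setminus H,P)$ for $k\le -1$); your proof silently uses the corrected version, which is the right thing to do, but you should flag that you are relying on the $j=-1$ case. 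The only point you leave as an expectation rather than a proof is $u^*IC_X\cong IC_H[1]$; since you justify it by transversality to the strata and the support/cosupport axioms (and it is the $IC$-summand instance of \lemmaref{lemmahyp} with $m-n=-1$), that is an acceptable level of detail for a background lemma of this kind.
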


\begin{theorem}[Hard Lefschetz Theorem for Intersection Cohomology]\label{hlt} Let $X$ be a projective variety. There is an isomorphism $$A^j:  IH^{\dim{X} -j}(X) \to IH^{\dim{X} + j}(X)$$ where $A$ is an ample class in $X$.
\end{theorem}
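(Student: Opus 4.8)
The plan is to deduce the statement from the Decomposition Theorem (\theoremref{dt}) together with the Hard Lefschetz Theorem for perverse cohomology groups (\theoremref{hlpc}), by realizing $IH^{*}(X)$ as a \emph{canonical} direct summand of the cohomology of a resolution. Choose a resolution of singularities $f\colon Y\to X$ with $Y$ smooth projective and $\dim Y = n = \dim X$. Since $f$ is birational, over the dense open locus where it is an isomorphism the complex $\derR f_{*}\CC_{Y}[n]$ is just $\CC[n]$, of generic rank one and sitting in perverse degree $0$. By \theoremref{dt} and its refinement into intersection complexes of strata, the only summand of $\derR f_{*}\CC_{Y}[n]$ whose support is all of $X$ is therefore $IC_{X}$ itself, occurring inside ${}^{\pp}\cohH^{0}(\derR f_{*}\CC_{Y}[n])$. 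Taking hypercohomology, $IH^{n+l}(X) = \HH^{l}(X, IC_{X})$ becomes a direct summand of $H^{n+l}_{0}(Y)$, on which the ample class $A$ acts by $\cup A$, exactly the operator appearing in the statement.

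The crux is to show that $\cup A$ preserves this $IC_{X}$-summand, so that the Hard Lefschetz isomorphism on $H^{*}_{0}(Y)$ restricts to it. First I would invoke the canonicity of the decomposition by support: writing $\derR f_{*}\CC_{Y}[n] = \bigoplus_{S} K_{S}$, where $K_{S}$ gathers all summands whose support is exactly $\bar S$, this decomposition is intrinsic, so its projectors are genuine morphisms in ${\rm D}(X)$. Cup product with a fixed class $A\in H^{2}(X) = \Hom_{{\rm D}(X)}(\CC_{X}, \CC_{X}[2])$ is a natural transformation of the identity functor into $[2]$, hence commutes with every morphism of ${\rm D}(X)$, in particular with the support projectors. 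Therefore $\cup A$ is block-diagonal for the support decomposition, and in particular preserves the full-support block $K_{X} = IC_{X}$; passing to ${}^{\pp}\cohH^{0}$ and then to hypercohomology, $\cup A$ leaves the summand $\HH^{*}(X, IC_{X}) = IH^{*}(X)$ of $H^{*}_{0}(Y)$ invariant.

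With this compatibility in hand, I would apply \theoremref{hlpc}, part \ref{hlpa}, with $b=0$ and $L=f^{*}A$: the iterated cup product $L^{j}\colon H^{n-j}_{0}(Y) \to H^{n+j}_{0}(Y)$ is an isomorphism. Since both source and target split off the canonical summands $IH^{n-j}(X)$ and $IH^{n+j}(X)$, and $L^{j}$ respects this splitting by the previous paragraph, restriction yields an isomorphism $A^{j}\colon IH^{n-j}(X) \xrightarrow{\sim} IH^{n+j}(X)$, which is the assertion. The main obstacle is exactly this compatibility step: the splitting produced by the Decomposition Theorem is \emph{not} canonical within a fixed support, so one cannot naively restrict operators to arbitrary summands — it is essential to isolate the canonical decomposition by support and to use the functoriality of cup product to guarantee that $\cup A$ fixes the distinguished full-support summand $IC_{X}$.
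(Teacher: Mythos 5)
This statement is quoted in the paper as a background result from \cite{cm05} (Theorem 2.2.3 there) and is not proved in the text; your derivation from \theoremref{dt} and \theoremref{hlpc} is correct and is essentially the argument of the cited reference. One minor remark: the canonicity of the decomposition by supports is not actually needed, since $\cup A$ is induced by $A\otimes\id$ on the coefficient complex and therefore commutes with the projector onto \emph{any} chosen direct summand, canonical or not; what matters is only that $IC_X$ is the unique full-support summand (as $f$ is birational), so that $IH^{*}(X)$ is a well-defined block on which the block-diagonal isomorphism $L^j$ of \theoremref{hlpc} restricts to an isomorphism.
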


\section{First results}
\subsection{$(p,1)$-stringy Hodge numbers} In this section we prove \theoremref{hp1}. The result is an application of \theoremref{logarithmicquotient} and \propositionref{gkkp11}. Recall that for a projective variety with at most Gorenstein canonical singularities, the $(p,1)$-stringy Hodge number is defined as: $$h^{p,1}_{st}(X) = h^{p,1}(Y) - h^{p-1,0}(D(1))$$ where $f:Y\to X$ is a log-resolution of singularities and $D$ the exceptional set.\\



We use the notation in Section \ref{logcx}. By \propositionref{logarithmicquotient}, we have the following exact sequence: \begin{equation}0 \to \Omega^p_Y \to W_1\Omega^p_Y(\log{D}) \to \bigoplus\Omega^{p-1}_{D_i} \to 0. \end{equation}
Consider the long exact sequence of cohomologies: \begin{equation}\label{eq1}0 \to H^0(\Omega^p_X) \to H^0(W_1\Omega^p_X(\log{D})) \to \bigoplus H^0(\Omega^{p-1}_{D_i}) \stackrel{d'}{\to} H^1(\Omega^p_X) \to \ldots\end{equation}

We use the following simplified version of \cite[Proposition 19.1]{gkkp11}.  

\begin{proposition} \label{gkkp11} Let $X$ be a complex quasi-projective variety with at most canonical singularities, and $f: Y \to X$ a log-resolution with exceptional set $D\subseteq Y$. Then the natural injection $$H^0(Y, \Omega^p_Y) \to H^0(Y, \Omega^p_Y(\log{D}))$$ is an isomorphism.
\end{proposition}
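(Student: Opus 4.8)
The plan is to reduce the statement to the extension theorem for differential forms on klt varieties, of which canonical singularities form a special case. Since the map in question is the map on global sections induced by the sheaf inclusion $\Omega^p_Y \hookrightarrow \Omega^p_Y(\log D)$, injectivity is automatic, and only surjectivity requires proof. In intuitive terms, what must be shown is that a $p$-form with at worst logarithmic poles along the exceptional divisor in fact acquires no poles at all.

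First I would record the chain of restriction maps to the smooth locus. Because $f$ is a log-resolution, it restricts to an isomorphism $Y \setminus D \cong X_{reg}$, so pulling back along this isomorphism identifies
$$ H^0(Y, \Omega^p_Y) \hookrightarrow H^0\bigl(Y, \Omega^p_Y(\log D)\bigr) \hookrightarrow H^0\bigl(Y \setminus D, \Omega^p_Y\bigr) \cong H^0(X_{reg}, \Omega^p). $$
Both inclusions are injective because $Y \setminus D$ is dense in $Y$ and both $\Omega^p_Y$ and $\Omega^p_Y(\log D)$ are locally free, hence torsion-free: a global section vanishing on a dense open set vanishes identically. The point is therefore to understand the image of the composite inside $H^0(X_{reg}, \Omega^p)$.

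Next I would invoke the extension theorem of \cite{gkkp11}. For a variety $X$ with canonical (indeed klt) singularities and any resolution $f : Y \to X$, reflexive differentials extend, that is $f_* \Omega^p_Y \cong \Omega^{[p]}_X$; equivalently, every holomorphic $p$-form on $X_{reg}$ extends to a regular holomorphic $p$-form on $Y$, so the composite restriction map $H^0(Y, \Omega^p_Y) \to H^0(X_{reg}, \Omega^p)$ is surjective (in fact bijective). Granting this, surjectivity of the map in the statement is formal: given a logarithmic $p$-form $\omega \in H^0(Y, \Omega^p_Y(\log D))$, its restriction to $X_{reg}$ extends to some $\tilde\omega \in H^0(Y, \Omega^p_Y)$; then $\omega$ and $\tilde\omega$ are sections of the locally free sheaf $\Omega^p_Y(\log D)$ agreeing on the dense open set $Y \setminus D$, whence $\omega = \tilde\omega$, and $\omega$ lies in $H^0(Y, \Omega^p_Y)$.

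The main obstacle is entirely concentrated in the cited extension theorem: the proposition is essentially equivalent to the \cite{gkkp11} result in the canonical case, and there is no elementary route around it, since a self-contained argument would have to reprove the extension of forms across the exceptional locus, which rests on the minimal model program and Hodge-theoretic vanishing. The remaining ingredient, namely that membership in $\Omega^p_Y$ is a closed, torsion-free condition so that agreement on a dense open set is agreement everywhere, is routine; the substance of the argument is simply to combine the sheaf inclusion with the extension statement in order to upgrade \emph{extends with at worst logarithmic poles} to \emph{extends without poles}.
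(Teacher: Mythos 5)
Your argument is correct and rests on exactly the same source as the paper: the paper states this proposition as a ``simplified version of [GKKP11, Proposition 19.1]'' and gives no proof, while you supply the short standard derivation from the GKKP extension theorem ($f_*\Omega^p_Y$ reflexive, i.e.\ forms on $X_{reg}$ extend to $Y$ without poles) plus torsion-freeness of $\Omega^p_Y(\log D)$. There is no substantive difference in approach, and your reduction is sound.
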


\begin{proof}[Proof of \theoremref{hp1}] The inclusion $H^0(\Omega^p_Y) \into H^0(W_1\Omega^p_Y(\log{D}))$ is actually an isomorphism by \propositionref{gkkp11}. Indeed, $H^0(\Omega^p_Y) \into H^0(W_1\Omega^p_Y(\log{D}))$ factors through $H^0(\Omega^p_Y) \into H^0(\Omega^p_Y(\log{D}))$ which is an isomorphism. Hence, $d'$ in (\ref{eq1}) is injective. Taking dimensions we get: $$h^{p,1}(Y) - h^{p-1,0}(D(1)) \geq 0,$$ which is precisely the statement $$h^{p,1}_{st}(X)\geq 0.$$
\end{proof}

This result, together with the description of $h^{p,0}_{st}(X)$ (see Section \ref{descriptionshn}), implies the following corollary. It was originally proved in \cite{sv07} with a different method.

\begin{corollary} If $X$ is a projective threefold with Gorenstein canonical singularities, then $$h^{p,q}_{st}(X)\geq 0$$ for $p+q\leq 3$. In particular, \conjectureref{conjecture} holds for $\dim{X} = 3$. 

\end{corollary}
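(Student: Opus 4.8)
The plan is to reduce the statement to a short, explicit list of bidegrees $(p,q)$ with $p+q\le 3$, and to verify each one by invoking either the elementary formula for $h^{p,0}_{st}$ or \theoremref{hp1}. By the symmetry $h^{p,q}_{st}(X)=h^{q,p}_{st}(X)$ recorded in Remark~\ref{rmkhn}, it suffices to treat the cases with $p\ge q$. Since $p\ge q\ge 2$ would force $p+q\ge 4$, the constraint $p+q\le 3$ leaves only the columns $q=0$ and $q=1$.

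For $q=0$ I would use the identity $h^{p,0}_{st}(X)=h^{p,0}(Y)$ (see Remark~\ref{descriptionshn}), where $f\colon Y\to X$ is a log-resolution; since $Y$ is smooth and projective, $h^{p,0}(Y)\ge 0$, so $h^{p,0}_{st}(X)\ge 0$ for every $p$, in particular for $p\in\{0,1,2,3\}$. For $q=1$ the remaining bidegrees are $(1,1)$ and $(2,1)$, and both are covered directly by \theoremref{hp1}, which gives $h^{p,1}_{st}(X)\ge 0$ for all $p$ and does not require $E_{st}(X)$ to be a polynomial. Together with the symmetry, these exhaust all $(p,q)$ with $p+q\le 3$, which proves the first assertion.

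For the final claim, suppose in addition that $E_{st}(X)$ is a polynomial and $n=\dim X=3$. Then the Poincaré-duality relation of Remark~\ref{rmkhn}, namely $h^{p,q}_{st}(X)=h^{n-p,n-q}_{st}(X)=h^{3-p,3-q}_{st}(X)$, together with the vanishing $h^{p,q}_{st}(X)=0$ for $p>3$ or $q>3$, confines the nonzero stringy Hodge numbers to the range $0\le p,q\le 3$. A bidegree $(p,q)$ in this range with $p+q\ge 4$ satisfies $(3-p)+(3-q)=6-(p+q)\le 2\le 3$, so its nonnegativity follows from the already-established upper half applied to $(3-p,3-q)$. Hence $h^{p,q}_{st}(X)\ge 0$ for all $p,q$, which is exactly \conjectureref{conjecture} for threefolds.

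I do not expect a genuine obstacle in the corollary itself: the entire substantive content sits in \theoremref{hp1}, whose proof through the logarithmic complex and the extension property of \propositionref{gkkp11} is the real input. The only points requiring care are the bookkeeping of which bidegrees actually occur for $p+q\le 3$, and the observation that the direct argument uses nothing beyond $h^{p,0}$ and $h^{p,1}$; consequently no hypothesis on $E_{st}(X)$ being a polynomial is needed for the inequalities, and that hypothesis enters only at the last step, to fold the lower half of the Hodge diamond onto the upper half via duality.
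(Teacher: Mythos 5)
Your proposal is correct and follows exactly the route the paper intends: the paper derives this corollary directly from Theorem~\ref{hp1} together with the identity $h^{p,0}_{st}(X)=h^{p,0}(Y)$, using the symmetry $h^{p,q}_{st}=h^{q,p}_{st}$ to reduce to $q\le 1$ and Poincar\'e duality to handle the lower half of the diamond in the polynomial case. Your write-up simply makes explicit the bookkeeping that the paper leaves implicit.
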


\subsection{Threefolds}\label{3folds} In this section we prove \propositionref{h22st} which is used in the proof of \theoremref{main}, and expands the information about the stringy Hodge diamond when $\dim{X} = 3$.\\

Let $X$ be a projective threefold with at most Gorenstein terminal singularities. It is enough to consider this case by Remark \ref{rmkterminal}. If $E_{st}(X)$ is a polynomial, one of the consequences is that $$h^{2,2}_{st}(X) = h^{1,1}_{st}(X)$$ by Remark \ref{rmkhn}\ref{pd}. As it was shown that $h^{1,1}_{st}(X)\geq 0$, the same is true for $h^{2,2}_{st}(X)$. If $E_{st}(X)$ is not a polynomial, then few things are known about the lower stringy Hodge triangle. For example, some numbers might fail to be nonnegative; see the example of a threefold with $h^{3,3}_{st}(X) < 0$ in \cite[Remark 3.5(1)]{sv07}.\\

We give a new interpretation of the number $h^{2,2}_{st}(X) - h^{1,1}_{st}(X)$ in terms of the analytic local defect, and use the existence of an analytic $\QQ$-factorialization of a threefold terminal singularity in \cite{kawamata88}, to show that $$h^{2,2}_{st}(X)\geq 0.$$ 

\begin{definition} Let $H$ be a threefold with isolated singularities. For a point $x\in H$ we define the local defect of $H$ at $x$ as $$\sigma(H,x) = \dim{\Weil(U_x)_{\QQ}/\Cart(U_x)_{\QQ}}$$ where $U_x$ is a contractible Stein open neighborhood of $x$ in $H$. If $\sigma(H,x) = 0$, we say that $H$ is analytically $\QQ$-factorial at $x$.
\end{definition}

If $H$ has rational singularities, the local defect is finite at every point \cite[Lemma 1.12]{kawamata88}. Using the strategy of the proof of this fact we can get a convenient description of $\sigma(H,x)$. Let $\{x_1, \ldots, x_m\}$ be the isolated singularities of $H$, and $x_i\in U_i$ be a Stein contractible neighborhood. For a log-resolution $f: H'\to H$, we have isomorphisms $$\Divi^0(U_i) \cong \Divi^0(V_i)$$ where $V_i = f^{-1}(U_i)$. As $U_i$ is Stein, we can apply Theorem B of Cartan \cite{cartan}, and obtain that $\Pic(U_i) = 0$. Let $f^{-1}(x_i) = \bigcup E_{i,j}$. Using the sequence $$\ZZ[E_{i,j}] \to \Weil(V_i) \to \Weil(V_i\setminus E_i) \to 0,$$ and the fact that $$\Weil(V_i\setminus E_i) \cong \Weil(U_i),$$ we get the following isomorphism given in \cite[3.9]{ns95}: $$\Weil(U_i)/\Cart(U_i) \cong H^1(V_i, O^*_{V_i})/\sum\ZZ[E_{i,j}].$$
Using again the fact that $H$ has rational singularities, we conclude that $$H^1(V_i, O^*_{V_i}) \cong H^2(E_i, \ZZ).$$ Therefore, we have the following expression in terms of the cohomology of $E_i$: \begin{equation}\label{localdefect}\sigma(H, x_i) = \dim{H^2(E_i)/\sum\CC[E_{i,j}]}.\end{equation}

\begin{proposition}\label{bm} Let $H$ be a threefold with Gorenstein terminal singularities and $x\in H$ a singular point. Then $$\sigma(H,x) \leq \#\{E_j : \ \discrep(E_j, H) = 1 \ \text{ and } \Center_H(E_j)=x  \}.$$ 
\end{proposition}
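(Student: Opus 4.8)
The plan is to use the cohomological description of the local defect in equation~\eqref{localdefect}, namely
$$\sigma(H, x) = \dim H^2(E)/\textstyle\sum\CC[E_j],$$
where $f\colon H' \to H$ is a log-resolution, $E = f^{-1}(x) = \bigcup E_j$ is the fiber over the singular point $x$, and the $E_j$ are the irreducible components of the exceptional divisors meeting $E$. The goal is to bound $\dim H^2(E)/\sum\CC[E_j]$ by the number of exceptional divisors of discrepancy exactly $1$ whose center is $x$. I would first fix attention on the reduced exceptional fiber $E$ and recall that, since $x$ is an isolated terminal singularity of a threefold, the exceptional set is a configuration of surfaces $E_j$ together with lower-dimensional components, and $H^2(E)$ can be analyzed via the Mayer--Vietoris type spectral sequence (or the simplicial description of the mixed Hodge structure of a normal crossings variety recalled in Section~\ref{mhssnc}).

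**Key steps.**

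First I would reduce to the case where the exceptional fiber is a simple normal crossings divisor, which we may arrange by choosing $f$ to be a log-resolution; then the weight-$2$ part of $H^2(E)$ is computed by the complex of Section~\ref{mhssnc}. The classes $\CC[E_j]$ span a subspace of $H^2(E)$ coming from the fundamental classes of the surface components, so the quotient $H^2(E)/\sum\CC[E_j]$ measures the ``extra'' two-dimensional cohomology not accounted for by the divisor classes themselves. Second, I would identify which components $E_j$ can contribute independent classes to this quotient: the crucial point from Kawamata's analysis in \cite{kawamata88} is that, in the process of $\QQ$-factorialization, the relevant divisors are precisely those with discrepancy $1$ over $x$. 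I expect the argument to proceed by matching each dimension contributed to the quotient with a distinct divisor $E_j$ satisfying $\discrep(E_j, H) = 1$ and $\Center_H(E_j) = x$, using the terminal (hence discrepancy $> 0$) condition to rule out contributions from the exceptional divisors of higher discrepancy and to control the combinatorics of the intersection configuration.

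**Main obstacle.**

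The hardest part will be establishing the precise correspondence between dimensions of the quotient $H^2(E)/\sum\CC[E_j]$ and the discrepancy-$1$ divisors, rather than just a crude count. A naive bound might overcount by including cohomology classes arising from intersections $E_i \cap E_j$ or from components of higher discrepancy; the terminal condition forces all discrepancies to be at least $1$, but one must show that only the extremal (discrepancy exactly $1$) divisors can fail to have their fundamental class already lie in $\sum\CC[E_j]$ in a way that produces a genuinely new cohomology class. Extracting this from the proof of the $\QQ$-factorialization result---where Kawamata uses discrepancy-$1$ divisors to guarantee termination---is where the ``careful analysis'' alluded to in the introduction must be carried out; I would follow the structure of that termination argument, reinterpreting each step on the level of the cohomology groups $H^2(E_i)$ and the span of the divisor classes, to convert the birational statement into the stated numerical inequality.
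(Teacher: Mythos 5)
Your starting point---the identity $\sigma(H,x)=\dim H^2(E)/\sum\CC[E_j]$---is indeed established in the paper, but it is used \emph{after} Proposition~\ref{bm} (in the proof of Proposition~\ref{h22st}) to translate the bound into the stringy Hodge number inequality; it is not the engine of the proof of the bound itself. The actual argument never analyzes the fiber $E$ of a log-resolution directly. Instead it runs Kawamata's $\QQ$-factorialization inductively on the birational side: starting from a contractible Stein neighborhood $V_0$ of $x$, one extracts a non-$\QQ$-Cartier Weil divisor via a small projective morphism $f_0\colon V_1\to V_0$ (isomorphism in codimension one, terminal $V_1$, $\Pic(V_1)\cong H^2(V_1,\ZZ)$), which gives $\sigma(V_0,x)=\sigma(V_1,f_0^{-1}(x))+b_2(V_1)$. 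The exceptional set of $f_0$ is a union of smooth curves, $V_1$ retracts onto it, and blowing up each such curve produces a divisor of discrepancy $1$ over $x$ with a distinct center in $V_1$; so the drop in the defect at each stage is matched injectively by discrepancy-$1$ divisors over $x$. Iterating until one reaches a $\QQ$-factorial model (where $\sigma=0$) gives the inequality.

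This is the concrete gap in your proposal: you defer the entire content to ``matching each dimension contributed to the quotient with a distinct divisor $E_j$ of discrepancy $1$,'' but no mechanism for that matching is given, and the framing is off in two ways. First, the classes that survive in $H^2(E)/\sum\CC[E_j]$ do not ``come from'' particular components $E_j$ (every fundamental class $[E_j]$ is killed in the quotient by definition); they come from $h^{1,1}$ of the individual ruled surfaces and the intersection combinatorics, and there is no direct way to attribute them to discrepancy-$1$ components without the small-modification induction. Second, the discrepancy-$1$ divisors in the count arise as blowups of curves on the intermediate small models $V_n$, not as a distinguished subset of the components of $E$ singled out by inspecting the quotient. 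Your plan to ``reinterpret each step of the termination argument on the level of $H^2(E_i)$'' is essentially a restatement of the goal; to close the gap you need to (i) invoke the existence of the small $\QQ$-factorialization steps \cite[Theorem 4.1, Lemma 3.1]{kawamata88}, (ii) prove the defect drops by the number of contracted curves at each step, and (iii) verify that blowing up each contracted curve yields a discrepancy-$1$ divisor centered at $x$, with distinct centers ensuring distinct valuations.
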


\begin{proof}We start with the same set up as in the proof of \cite[Corollary 4.5]{kawamata88}. Let $V=V_0$ be a Stein contractible neighborhood of $x$. Suppose it is not $\QQ$-factorial and let $D_0\in \Weil(V_0)$ which is not $\QQ$-Cartier. There exists an analytic space and a projective morphism $f_0: V_1 \to V_0$ which is an isomorphism in codimension 1 and the strict transform of $D_0$ is $\QQ$-Cartier \cite[Theorem 4.1 and Lemma 3.1]{kawamata88}. $V_1$ has also terminal singularities and $\Pic(V_1)\cong H^2(V_1,\ZZ)$. We get $$\sigma(V_0,x) = \sigma(V_1, f_0^{-1}(x)) + b_2(V_1)$$ where  $\sigma(V_1, f_0^{-1}(x)) = \dim{\Weil(V_1)_{\QQ}/\Cart(V_1)_{\QQ}}$. $V_1$ retracts to the exceptional set of $f_0$ which consists of a union of curves. The blow up of the any of these curves (which are smooth in $V_1$) produce a discrepancy 1 divisor over $x$ \cite[Proof of Theorem 6.25]{km}, and each of them have a different center in $V_1$. The conclusion is that the local defect and the number of discrepancy 1 divisors over $x$ drop by the same number. \\

We continue the process by induction. Suppose $V_n$ is not $\QQ$-factorial and let $D_n\in \Weil(V_n)$ which is not $\QQ$-Cartier. As before, there exists a projective morphism $f_n: V_{n+1}\to V_n$ with the same conditions as $f_0$ and $D_0$, in particular the exceptional set is a union of curves. Let $g_n: V_n \to V_0$ be the composition of the previous morphisms. The exceptional set is also a union of curves, and $V_n$ retract onto them. As $f_n$ is an isomorphism outside of the isolated singularities of $V_n$, the class of each of the curves $C_i$ in $H^2(V_n, \ZZ)$ maps to the class of the curve $f_{n*}^{-1}C_i$ in $H^2(V_{n+1}, \ZZ)$. Therefore,  $$\sigma(V_n,g_n^{-1}(x)) = \sigma(V_{n+1}, g_{n+1}^{-1}(x)) + b_2(V_{n+1}) - b_2(V_{n}),$$ that is, it drops by the number of exceptional curves of $f_n$. They also correspond to discrepancy 1 divisors over $x$ with different centers in $V_{n+1}$. So at each step, the local defect is dropping by the number of discrepancy 1 divisors over $x$ that come from blowing up the exceptional curves of the map.\\

The process stops and we reach an analytic space $W$ which is $\QQ$-factorial. For $W$ the claim is trivial as the local defect is 0. The result follows.

\end{proof}

Every irreducible component $E_{i,j}$ of an exceptional divisor over a terminal singularity of a threefold is birational to a ruled surface \cite[Corollary 2.14]{reid80}. This implies $b_2(E_{i,j}) = h^{1,1}(E_{i,j})$, as $h^{2,0}$ is a birational invariant. \\

The vector space $H^2(E_i)$ has a pure Hodge structure (cf. proof of Proposition 2.1 in \cite{ns95}), which implies that $$b_2(E_i) = b_2(E_i(1)) - b_2(E_i(2))$$ (see Section \ref{mhssnc}), and therefore $$b_2(E_i) = h^{1,1}(E_i(1)) - h^0(E(2)).$$ 
We also have an inclusion $$H_4(E_i) \into H^2(E_i),$$ which means the classes $\CC[E_{i,j}]$ are independent in $H^2(E_i)$ (see for example \cite[Corollary 1.12]{St83}). Plugging in this information into (\ref{localdefect}) we get: \begin{equation}\label{localdefect2}\sigma(H,x_i) = h^{1,1}(E_{i}(1)) - h^0(E_{i}(2)) - h^0(E_i(1)).\end{equation}

\begin{proposition}\label{h22st} For a projective threefold $H$ with at most Gorenstein canonical singularities $$h^{2,2}_{st}(H) \geq h^{1,1}_{st}(H).$$

\end{proposition}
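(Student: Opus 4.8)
The plan is to reduce to the terminal case and compute the difference $h^{2,2}_{st}(H)-h^{1,1}_{st}(H)$ directly, matching it to the local-defect expression (\ref{localdefect2}) so that \propositionref{bm} applies. First I would invoke Remark \ref{rmkterminal} together with \theoremref{bat1} to assume that $H$ has Gorenstein terminal singularities; then the singular locus is a finite set of points $x_1,\dots,x_m$. I fix a log-resolution $f\colon H'\to H$, set $E_i=f^{-1}(x_i)$, and let $E=\bigcup E_{i,j}$ be the reduced exceptional divisor, so that $E(k)=\coprod_i E_i(k)$ for each $k$.

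Next I substitute into the explicit descriptions of Remark \ref{descriptionshn}:
\begin{align*}
h^{1,1}_{st}(H) &= h^{1,1}(H') - h^0(E(1)),\\
h^{2,2}_{st}(H) &= h^{2,2}(H') - h^{1,1}(E(1)) + h^0(E(2)) + \#\{\,j:\discrep(E_j,H)=1\,\}.
\end{align*}
Since $H'$ is a smooth projective threefold, Poincar\'e duality gives $h^{2,2}(H')=h^{1,1}(H')$, so those terms cancel in the difference and
$$h^{2,2}_{st}(H)-h^{1,1}_{st}(H)=\#\{\,j:\discrep(E_j,H)=1\,\}-\bigl(h^{1,1}(E(1))-h^0(E(2))-h^0(E(1))\bigr).$$
Summing (\ref{localdefect2}) over the singular points identifies the parenthesized quantity with $\sum_i\sigma(H,x_i)$, so the statement reduces to the inequality
$$\#\{\,j:\discrep(E_j,H)=1\,\}\ \ge\ \sum_i \sigma(H,x_i).$$

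To close the gap I would apply \propositionref{bm}, which bounds each $\sigma(H,x_i)$ by the number of \emph{all} discrepancy-$1$ divisors over $H$ centered at $x_i$. Thus it suffices to show that the discrepancy-$1$ components of $E$ already capture every such divisor, i.e. $\#\{j:\discrep(E_j,H)=1\}=\sum_i\#\{E:\discrep(E,H)=1,\ \Center_H(E)=x_i\}$. The key point is that none of these divisors can be missed by $f$: if $E$ is a divisor over $H$ with $\Center_H(E)=x_i$ that does not appear on $H'$, then its center on $H'$ lies in $E_i\subseteq E$, so $\operatorname{ord}_E(E_{i,j})\ge 1$ for some $j$; writing $K_{H'}=f^*K_H+\sum_{i,j}\discrep(E_{i,j},H)\,E_{i,j}$ and comparing discrepancies over $H'$ and over $H$ yields $\discrep(E,H)\ge \discrep(E,H')+\discrep(E_{i,j},H)\ge 1+1=2$, using that every exceptional divisor over the smooth variety $H'$ has discrepancy at least $1$ and that $H$ is terminal. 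Hence every discrepancy-$1$ divisor centered at a singular point occurs among the $E_{i,j}$, the displayed equality holds, and combining it with \propositionref{bm} gives the required inequality.

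The main obstacle is precisely this last step: a priori the combinatorial count on the chosen resolution could undercount the right-hand side of \propositionref{bm} if some discrepancy-$1$ valuation over $x_i$ were extracted only after further blow-ups. The discrepancy estimate above rules this out, and as a byproduct shows the count is the same for every log-resolution, consistent with the resolution-independence of the stringy invariants. Everything else is bookkeeping with Remark \ref{descriptionshn} and the cancellation coming from Poincar\'e duality on $H'$.
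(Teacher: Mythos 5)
Your proof is correct and follows essentially the same route as the paper: reduce to the terminal case, expand the difference via Remark \ref{descriptionshn}, cancel $h^{2,2}(H')=h^{1,1}(H')$ by Poincar\'e duality, identify the remaining combinatorial quantity with $\sum_i\sigma(H,x_i)$ via (\ref{localdefect2}), and conclude with \propositionref{bm}. The only divergence is that you supply a self-contained discrepancy estimate showing every discrepancy-one divisor over a singular point already appears on the chosen log-resolution, where the paper simply cites the proof of Lemma 6.36 in Koll\'ar--Mori; your argument is the standard one and is correct.
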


\begin{proof} We can assume $H$ has terminal singularities by Remark \ref{rmkterminal}. Let $f': H'\to H$ be a log-resolution with exceptional set $E= \bigcup E_{i,j}$. Recall the description of the stringy Hodge numbers in Remark \ref{descriptionshn}. We have:  $$h^{2,2}_{st}(H) - h^{1,1}_{st}(H) = -h^{1,1}(E(1)) + h^0(E(2)) + h^0(E(1))+ \sum_{a_{ij}=1}h^0(E_{i,j}).$$ Note that all exceptional divisors of discrepancy 1 must show up in $f'$ \cite[Proof of Lemma 6.36]{km}. By taking addition over every singularity, \propositionref{bm} together with (\ref{localdefect2}) is equivalent to $$h^{2,2}_{st}(H) - h^{1,1}_{st}(H) \geq 0.$$ 

\end{proof}

\begin{proof}[Proof of \corollaryref{cor3folds}] This is a consequence of \theoremref{hp1} and \propositionref{h22st}.
\end{proof}

\subsection{Low dimensional singular locus} In this section we prove \theoremref{hp2s}. We in fact prove a more general result. Recall that $h^{p,q}_{st}(X)$ can be written as the sum of two pieces: one (called $a_{p,q}(X)$ in (\ref{apq})) which does not depend on the discrepancy of the irreducible components of the exceptional divisor, and another one which does. We show that $a_{p,q}(X)$ is nonnegative under some conditions of the dimension of the singular locus.\\

If a variety has isolated singularities, the higher cohomologies of the exceptional divisor have a pure Hodge structure \cite[Corollary 1.12]{St83}. The following is a generalization without restriction on the dimension of the singular locus:

\begin{theorem}[{\cite[Theorem 6.31]{PS}}] \label{lemmaps}Let $X$ be an algebraic variety of dimension $n$. Let $Z$ be the singular locus which has dimension $s$ and $f: Y\to X$ a resolution such that $f^{-1}(Z) = D$ is a simple normal crossings divisor on $Y$. Then:
\begin{itemize}
	\item $W_{k-1}H^k(D) = 0$ for all $k\geq n+s$.
	\item If $Z$ is compact, then $H^k(D)$ is pure of weight $k$ for all $k\geq n+s$.
\end{itemize}
\end{theorem}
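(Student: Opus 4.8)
The plan is to separate the two assertions: the second will follow from the first together with properness. Indeed, if $Z$ is compact then $D=f^{-1}(Z)$ is proper, so by Remark \ref{weights}\ref{mhsproper} we have $W_kH^k(D)=H^k(D)$, i.e.\ all weights on $H^k(D)$ are $\le k$; combined with the first bullet, which asserts $W_{k-1}H^k(D)=0$ (all weights $\ge k$) for $k\ge n+s$, this forces $H^k(D)$ to be pure of weight $k$. So the entire content is the vanishing $W_{k-1}H^k(D)=0$ for $k\ge n+s$, which I would prove with no compactness hypothesis.

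First I would read off the low weights through the weight spectral sequence of the normal crossings divisor $D$ recalled in Section \ref{mhssnc}. By \eqref{snchp}, the graded piece ${\rm Gr}^W_qH^k(D)$ is the cohomology at the spot $D(k-q+1)$ of the complex $H^q(D(1))\xrightarrow{\delta_1}H^q(D(2))\xrightarrow{\delta_2}\cdots$; in particular it is a subquotient of $H^q\big(D(k-q+1)\big)$. Thus the claim is equivalent to exactness of these complexes past the first term in the range $k\ge n+s$, i.e.\ to the statement that the multiple loci $D(p)$ with $p\ge 2$ contribute nothing to $H^k(D)$ once $k\ge n+s$. For non-compact $Z$ the precise bookkeeping uses the open analogue of this spectral sequence, but $W_{k-1}H^k(D)=0$ is a condition on a neighborhood of $D$ and is insensitive to the geometry of $Y\setminus D$, so one may reduce to the proper situation.

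The parameter $s$ must enter through the proper map $\pi=f\restr{D}\colon D\to Z$, whose target has dimension $s$. I would stratify $Z$ finely enough that each restriction $g_p=f\restr{D(p)}\colon D(p)\to Z$ has constant fibre dimension $e_{p,S}$ over every stratum $S$; since $f^{-1}(S)\subseteq D$ has dimension $\le n-1$, the fibre dimension over a stratum of dimension $d_S$ satisfies $e_{p,S}\le n-1-d_S$. Applying \theoremref{dt} and the relative Hard Lefschetz theorem \theoremref{hlpc} to the maps $g_p$ decomposes each $H^q(D(p))$ into intersection cohomology groups $IH^{*}(\overline{S})$ of subvarieties $S\subseteq Z$ (so with $\dim S\le s$, hence spread over at most $2s+1$ degrees), and the crucial compatibility is that the restriction maps $\delta_\bullet$ respect this decomposition, in the spirit of \lemmaref{lemmahyp}. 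This should reduce the required exactness to a fibrewise assertion over the strata of $Z$, to be closed by induction on dimension using the degree constraints (at most $2s$ from the base and $e_{p,S}\le n-1-d_S$ from the fibres).

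The main obstacle is exactly this last reduction. A naive dimension count does not suffice: since $\dim D(p)=n-p$ can be attained even over a positive-dimensional part of $Z$, bounding $H^q(D(p))$ by its top degree only yields purity for $k$ of order $2n$, far from the sharp threshold $n+s$. The improvement rests entirely on the fibration structure of $\pi$ over the $s$-dimensional base and on the interaction of the weight filtration with the Leray (equivalently, perverse) filtration of $\pi$, so the delicate point is to show that the would-be low-weight classes carried by the strata $D(p)$ are \emph{killed} by the spectral-sequence differentials, rather than merely absent for dimension reasons. Carrying out the induction --- in effect reducing purity of $H^{\ge n+s}(D)$ to purity of the high-degree cohomology of the proper fibres of $\pi$, whose dimension and singular locus are themselves controlled by $e_{p,S}\le n-1-d_S$ --- is the heart of the argument.
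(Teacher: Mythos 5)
Your reduction of the second bullet to the first is correct and matches the paper: properness of $D$ gives $W_kH^k(D)=H^k(D)$, so purity follows once $W_{k-1}H^k(D)=0$. But the first bullet --- the entire content of the statement, as you yourself say --- is not proved in your proposal. You translate it (for proper $D$) into exactness of the complexes $H^q(D(1))\to H^q(D(2))\to\cdots$ past the first spot, propose to attack this with the Decomposition Theorem for the maps $D(p)\to Z$ over a stratification of $Z$, and then explicitly defer ``the heart of the argument'': showing that the low-weight classes are killed by the spectral-sequence differentials. That deferred step is precisely the theorem; nothing in the proposal establishes it, and you correctly observe that naive dimension counts land at thresholds of order $2n$ rather than $n+s$. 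A second, smaller gap: for non-compact $Z$ the divisor $D=f^{-1}(Z)$ is itself non-compact, each $H^q(D(r))$ then carries weights between $q$ and $2q$, and the identification of ${\rm Gr}^W_qH^k(D)$ with a subquotient of $H^q(D(k-q+1))$ fails; the remark that one ``may reduce to the proper situation'' because the condition is insensitive to $Y\setminus D$ is not justified.

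The paper's proof is much shorter and avoids the weight spectral sequence of $D$ entirely. Cover $Z$ by $s+1$ affine open subsets $U_0,\dots,U_s$ of $X$ (complements of general ample divisors in the projective case); each $U_i$ has the homotopy type of an $n$-dimensional CW complex, so $H^{>n}(U_i)=0$, and Mayer--Vietoris gives $H^k(U)=0$ for $k>n+s$ where $U=U_0\cup\cdots\cup U_s$. Setting $\tilde U=f^{-1}(U)$, the long exact sequence of the resolution square
$$\cdots\to H^k(U)\to H^k(\tilde U)\oplus H^k(Z)\to H^k(D)\to H^{k+1}(U)\to\cdots$$
yields a surjection $H^k(\tilde U)\to H^k(D)$ for $k\geq n+s$ (since $H^{k+1}(U)=0$ and $H^k(Z)=0$ as $k>2s$). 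Because $\tilde U$ is smooth, $H^k(\tilde U)$ has only weights $\geq k$, and since morphisms of mixed Hodge structures are strict, the quotient $H^k(D)$ satisfies $W_{k-1}H^k(D)=0$ --- with no compactness assumption on $Z$. If you want to salvage your approach, the surjection $H^k(\tilde U)\to H^k(D)$ is exactly the input you would need to kill the putative low-weight classes, so you would in any case end up reproving this lemma first.
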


We reproduce the proof from \cite{PS}, as we need to use some of its intermediate steps in the proof of \propositionref{prop}.

\begin{proof}Let $U_0, \ldots , U_s$ be $(s+1)$ affine open subsets of $X$ which cover $Z$. In the projective case we can take $U_i$ to be the complement of a general ample divisor. Indeed, as the intersection of these divisors is a variety of codimension $s$, and the divisors are general, this subvariety does not intersect $Z$, and hence $U:= U_0 \cup \ldots \cup U_s$ covers $Z$. As $U_i$ is affine, it is homotopic to a CW-complex of dimension $n$, and then $H^k(U_i) = 0$ for $k>n$; see e.g. \cite[Part II, 5.1*]{stratifiedmt}. Using Mayer-Vietoris and induction we get that $$H^k(U) = 0$$ for $k>n+s$. \\

Let $\tilde{U} = f^{-1}(U)$. For the birational morphism $f\restr{\tilde{U}}$ we have the following long exact sequence:
$$\ldots \to H^k(U) \to H^k(\tilde{U}) \oplus H^k(Z) \to H^k(D) \to H^{k+1}(U) \to \ldots$$ (see for instance \cite[Corollary-Definition 5.37]{PS}).
For $k\geq n+s$ we get a surjection  \begin{equation}\label{surj} H^k(\tilde{U}) \to H^k(D).\end{equation}
As $\tilde{U}$ is smooth, it only has the upper weights (Remark \ref{weights}\ref{mhssmooth}). From this we get the first result. If moreover $Z$ is compact, then $D$ is compact as well, and hence it only has lower weights (Remark \ref{weights}\ref{mhsproper}). The result follows.

\end{proof}

\begin{proposition}\label{prop} Let $X$ be a projective variety of dimension $n$ with Gorenstein canonical singularities. Let $Z$ be its singular locus and suppose it has dimension $s$. Then $a_{p,q}(X)$ is nonnegative for all $p,q$ such that $p+q\leq n-s$.
\end{proposition}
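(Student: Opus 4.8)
\emph{The plan is to use Serre/Poincaré duality to reindex $a_{p,q}(X)$ so that the low-degree hypothesis $p+q\le n-s$ becomes a high-degree statement, to which \theoremref{lemmaps} and the surjection \eqref{surj} apply.} Set $m' = 2n-p-q$, so that $p+q\le n-s$ is equivalent to $m'\ge n+s$. Each $D(k)$ is smooth projective of dimension $n-k$, so Serre duality gives $h^{p-k,q-k}(D(k)) = h^{n-p,n-q}(D(k))$, where the right-hand side is now read as the $(n-p,n-q)$-Hodge number of $H^{m'}(D(k))$. Thus, from \eqref{apq} and with $D(0)=Y$,
$$a_{p,q}(X) = \sum_k (-1)^k \dim\bigl[H^{m'}(D(k))\bigr]^{(n-p,n-q)},$$
and now both the cohomological degree $m'$ and the Hodge type $(n-p,n-q)$ are fixed. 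First I would observe that this is exactly the Euler characteristic of the restriction (Čech-type) complex of \parref{mhssnc} in degree $m'$, augmented in front by a $Y$-term:
$$\bigl[H^{m'}(Y)\bigr]^{(n-p,n-q)} \xrightarrow{\ r\ } \bigl[H^{m'}(D(1))\bigr]^{(n-p,n-q)} \xrightarrow{\ \delta_1\ } \bigl[H^{m'}(D(2))\bigr]^{(n-p,n-q)} \to \cdots,$$
where $r$ is restriction along $D\into Y$ (one checks $\delta_1\circ r=0$, so this is a complex) and the $\delta_j$ are as in \eqref{snchp}. Since the $\delta_j$ and $r$ are morphisms of pure Hodge structures of type $(0,0)$, passing to a fixed Hodge component commutes with kernels and cokernels, and I may argue Hodge-piece by Hodge-piece.

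The next step is to show this complex has cohomology only in degree $0$, so that $a_{p,q}(X) = \dim\ker(r)\ge 0$. At the positions $k\ge 2$ the prepended $Y$ is irrelevant, and the cohomology agrees with that of the complex in \eqref{snchp}, namely ${\rm Gr}^W_{m'}H^{m'+k-1}(D)$. Because $m'+k-1 \ge m'\ge n+s$ and $Z$ is projective (hence compact), \theoremref{lemmaps} says $H^{m'+k-1}(D)$ is pure of weight $m'+k-1>m'$ for $k\ge 2$, so these graded pieces vanish. At $k=1$ the cohomology is $H^{m'}(D)/\im(r)$, using that $\ker\delta_1 = {\rm Gr}^W_{m'}H^{m'}(D) = H^{m'}(D)$ by the same purity; it therefore remains to prove that $r$ is surjective.

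Surjectivity of $r$ is where the intermediate steps of the proof of \theoremref{lemmaps} enter. Let $\tilde U = f^{-1}(U)$ be as in that proof. Since $m'\ge n+s$, the surjection \eqref{surj} gives that restriction $\rho\colon H^{m'}(\tilde U)\to H^{m'}(D)$ is onto. Now $Y$ is a smooth projective compactification of the smooth variety $\tilde U$, so by Deligne's theory the image of $H^{m'}(Y)\to H^{m'}(\tilde U)$ is the lowest-weight subspace $W_{m'}H^{m'}(\tilde U)$. The map $\rho$ is a morphism of mixed Hodge structures, hence strict for $W$; since its target is pure of weight $m'$ by \theoremref{lemmaps}, strictness forces $\rho\bigl(W_{m'}H^{m'}(\tilde U)\bigr) = H^{m'}(D)$. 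Composing, $r\colon H^{m'}(Y)\to H^{m'}(D)$ is surjective, and as it is a morphism of pure Hodge structures of weight $m'$ it stays surjective on the $(n-p,n-q)$ component. This kills the degree-$1$ cohomology and yields $a_{p,q}(X) = \dim\ker(r)\ge 0$.

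I expect the main obstacle to be exactly this descent from $\tilde U$ to the compactification $Y$: \eqref{surj} only provides surjectivity from the open neighborhood $\tilde U$, and the global restriction $H^{m'}(Y)\to H^{m'}(\tilde U)$ need not be onto, so the weight-strictness argument combined with the purity of $H^{m'}(D)$ is what transfers surjectivity to $Y$. The Poincaré-duality reindexing in the first step is the other decisive idea, since it is what makes the low-degree hypothesis $p+q\le n-s$ meet the high-degree regime $m'\ge n+s$ in which \theoremref{lemmaps} supplies purity and \eqref{surj} supplies surjectivity.
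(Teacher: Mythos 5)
Your proof is correct and follows essentially the same route as the paper: reindex via Serre duality so that $p+q\le n-s$ becomes $2n-p-q\ge n+s$, invoke the purity statement of \theoremref{lemmaps} to make the simple normal crossings complex exact, and use the surjection \eqref{surj} to transfer surjectivity to $H^{2n-p-q}(Y)\to H^{2n-p-q}(D)$. The only cosmetic differences are that you package the conclusion as an Euler characteristic of the augmented complex (identifying $a_{p,q}(X)=\dim\ker(r)$) rather than computing the alternating sum directly, and that you prove the descent from $\tilde U$ to $Y$ by hand via weights and strictness where the paper cites Deligne's \cite[Prop.~8.2.6]{HodgeIII}.
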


\begin{proof} Let $f: Y\to X$ be a log-resolution and $f^{-1}(Z) = D$ a simple normal crossings divisor. Let $k:= 2n -p - q$. Note that by assumption we have that $k\geq n+s$. By \theoremref{lemmaps} we get that $H^k(D)$ has a pure Hodge structure. The complex used to describe the mixed Hodge structure of $H^k(D)$ (see Section \ref{mhssnc}) is then an exact sequence:
$$ 0\to H^k(D) \to H^k(D(1)) \to \ldots \to H^k(D(p)) \to 0.$$
Taking the $h^{n-p,n-q}$ pieces we get $$h^{p,q}(D) = \sum_{i\geq 1}{(-1)^{i+1}h^{n-p,n-q}(D(i))} = \sum_{i\geq 1}{(-1)^{i+1}h^{p-i,q-i}(D(i))}.$$

Let $U$ be as in the proof of \theoremref{lemmaps}. We have a surjection $H^k(\tilde{U}) \to H^k(D)$. As $Y$ is a smooth compactification of $\tilde{U}$, we have that the image of $H^k(Y) \to H^k(D)$ is the same as that of $H^k(\tilde{U}) \to H^k(D)$ \cite[Prop. 8.2.6]{HodgeIII}. In particular, we obtain that $$h^{p,q}(Y) \geq h^{p,q}(D) = \sum_{i\geq 1}{(-1)^{i+1}h^{p-i,q-i}(D(i))}, $$
and therefore $$a_{p,q}(X) = \sum_{i\geq 0}{(-1)^ih^{p-i,q-i}(D(i))} \geq 0.$$

\end{proof}

\begin{proof}[Proof of \theoremref{hp2s}]This is a direct consequence of \propositionref{prop}. Recall that $$h^{p,2}_{st}(X) = a_{p,2}(X) + \sum_{a_j=1}h^0(D_j).$$ Since $a_{p,2}(X)\geq 0$, we have $$h^{p,2}_{st}(X)\geq 0.$$
\end{proof}

\section{Main theorem}
\subsection{Fourfolds}\label{fourfolds}
A consequence of \theoremref{hp1} applied to a projective fourfold $X$ with Gorenstein terminal singularities, together with the fact that $h^{p,0}_{st}(X)\geq 0$ always holds (see Remark \ref{descriptionshn}), is that $$h^{p,q}_{st}(X)\geq 0$$ when $p+q\leq 4$ and $(p,q)\neq (2,2)$. This means that for \conjectureref{conjecture} to hold, it remains to prove $h^{2,2}_{st}(X)\geq 0$. If $X$ has isolated singularities, \theoremref{hp2s} implies the inequality above. Therefore, for the rest of the paper we discuss fourfolds whose singular locus has at least one component of dimension 1.\\

Let $f:Y \to X$ be a log-resolution of singularities, $D$ the exceptional set, and $C$ the singular locus of $X$. Recall that $$h^{2,2}_{st}(X) = a_{2,2}(X) + \sum_{a_j=1}{h^0(D_j)}$$ and $$a_{2,2}(X) = h^{2,2}(Y) - h^{1,1}(D(1)) + h^0(D(2)).$$ Using the isomorphism (\ref{logarthmichyp}), and denoting $U:= Y\setminus D \cong X_{reg}$, we have $$a_{2,2}(X) = h^{2,2}({\rm Gr}^W_4H^4(U)) - h^{2,2}({\rm Gr}^W_4H^3(U)) + h^{2,2}({\rm Gr}^W_4H^2(U))\footnote{This last term $h^{2,2}({\rm Gr}^W_4H^2(U))$ can actually be shown to be equal to 0, but this does not affect the discussion below.}.$$ Indeed, the complex $E_1^{*,4}$ given by the spectral sequence used to compute the weight filtration on the cohomologies of $U$ is precisely $$0\to H^0(D(2)) \to H^2(D(1)) \to H^4(Y) \to 0,$$ where the maps are alternating sums of the Gysin morphisms (see \cite[Corollary 8.33 and Proposition 8.34]{voisin1}). These are maps of Hodge structures, and the spectral sequence degenerates at $E_2$ with $E_2^{p,q} = {\rm Gr}^W_qH^{p+q}(U)$ (see \cite[Theorem 8.35]{voisin1}). The equality above follows by taking the corresponding Hodge pieces.\\

A sufficient condition for $a_{2,2}(X)$ to be nonnegative, and as a consequence $h_{st}^{2,2}(X)$ as well, is $$h^{2,2}({\rm Gr}^W_4H^3(U)) = 0.$$ This is equivalent by Poincaré duality to $$h^{2,2}({\rm Gr}^W_4H^5_c(U)) = 0.$$ 
On the other hand, using the sequence of the pair $(X, C)$ we obtain $$H^5_c(U) \cong H^5(X)$$ as mixed Hodge structures. This means that an equivalent sufficient condition is $h^{2,2}({\rm Gr}^W_4H^5(X)) = 0$. With this discussion we get the following.

\begin{proposition}\label{proph5} Let $X$ be a fourfold with at most Gorenstein terminal singularities. If $W_4H^5(X)=0$, then $$h^{2,2}_{st}(X)\geq 0.$$ If moreover $E_{st}(X)$ is a polynomial, \conjectureref{conjecture} is true for $X$. In particular, if $H^5(X) = 0$, or equivalently $H^3(X_{reg}) = 0$, all of the above holds. 
\end{proposition}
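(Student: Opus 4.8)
The bulk of the argument is the reduction carried out just above, and the plan is to assemble it into a clean chain and then feed in the hypothesis. First I would record, from Remark~\ref{descriptionshn}, the splitting $h^{2,2}_{st}(X) = a_{2,2}(X) + \sum_{a_j = 1} h^0(D_j)$. The discrepancy-one contributions $h^0(D_j)$ are dimensions of cohomology groups, hence nonnegative, so it is enough to prove $a_{2,2}(X) \geq 0$. Writing $U = Y \setminus D \cong X_{reg}$ and using the description $a_{2,2}(X) = h^{2,2}({\rm Gr}^W_4 H^4(U)) - h^{2,2}({\rm Gr}^W_4 H^3(U)) + h^{2,2}({\rm Gr}^W_4 H^2(U))$ obtained from the $E_1^{*,4}$ row of the logarithmic spectral sequence and its degeneration at $E_2$, I observe that the first and third summands are dimensions of Hodge spaces and therefore nonnegative. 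Thus $a_{2,2}(X) \geq 0$ will follow as soon as the middle term vanishes, and the whole problem reduces to establishing $h^{2,2}({\rm Gr}^W_4 H^3(U)) = 0$.

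The second step is to transport this vanishing to the hypothesis through two isomorphisms of mixed Hodge structures. Poincar\'e duality on the smooth fourfold $U$ identifies the weight-$4$, type-$(2,2)$ part of $H^3(U)$ with the weight-$4$, type-$(2,2)$ part of $H^5_c(U)$, so the condition becomes $h^{2,2}({\rm Gr}^W_4 H^5_c(U)) = 0$; and the long exact sequence of the pair $(X, C)$, in which $\dim C \leq 1$ forces $H^4(C) = H^5(C) = 0$, yields $H^5_c(U) \cong H^5(X)$. Hence the condition is exactly $h^{2,2}({\rm Gr}^W_4 H^5(X)) = 0$. Since $W_4 H^5(X) = 0$ implies ${\rm Gr}^W_4 H^5(X) = 0$, this vanishing is automatic, and $h^{2,2}_{st}(X) \geq 0$ follows. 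I expect the only genuinely delicate point to be the Hodge-theoretic bookkeeping here: one must check that both isomorphisms respect the weight and Hodge filtrations so that the relevant type-$(2,2)$, weight-$4$ pieces really do correspond, but this is precisely the functoriality of mixed Hodge structures recalled in Section~\ref{mhs} together with the weight and type shifts of Poincar\'e--Verdier duality.

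For the polynomial case I would combine the inequality just proved with \theoremref{hp1}, with $h^{p,0}_{st}(X) = h^{p,0}(Y) \geq 0$ (Remark~\ref{descriptionshn}), and with the symmetry $h^{p,q}_{st}(X) = h^{q,p}_{st}(X)$ to conclude $h^{p,q}_{st}(X) \geq 0$ for every $(p,q)$ with $p + q \leq 4$; the duality $h^{p,q}_{st}(X) = h^{4-p,4-q}_{st}(X)$ of Remark~\ref{rmkhn}\ref{pd} then propagates nonnegativity across the entire Hodge diamond, giving \conjectureref{conjecture}. Finally, $H^5(X) = 0$ trivially forces $W_4 H^5(X) = 0$; and composing $H^5(X) \cong H^5_c(U)$ with the Poincar\'e duality isomorphism identifying $H^5_c(U)$ with the dual of $H^3(U)$ (up to a Tate twist) on $U = X_{reg}$ shows that $H^5(X)$ vanishes precisely when $H^3(X_{reg})$ does, which yields the asserted equivalence $H^5(X) = 0 \Leftrightarrow H^3(X_{reg}) = 0$ and completes the proof.
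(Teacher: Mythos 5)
Your proposal is correct and follows essentially the same route as the paper: the reduction $h^{2,2}_{st}(X)=a_{2,2}(X)+\sum_{a_j=1}h^0(D_j)$, the rewriting of $a_{2,2}(X)$ via the weight-$4$ row of the logarithmic spectral sequence, Poincar\'e duality on $U=X_{reg}$, and the exact sequence of the pair $(X,C)$ giving $H^5_c(U)\cong H^5(X)$ are exactly the chain the paper assembles before stating the proposition. The only (minor) addition on your part is spelling out the weight/type bookkeeping under duality and the final Lefschetz-diamond propagation, both of which are handled correctly.
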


The following example shows that $a_{2,2}(X)$ can be negative, and therefore arguments to prove that in general $h^{2,2}_{st}(X)\geq 0$ must take into consideration the term $\displaystyle\sum_{a_j=1}h^0(D_j)$.

\begin{example} Let $X_0$ be the Burkhardt quartic, given by the equation $$x_0^4-x_0(x_1^3+x_2^3+x_3^3+x_4^3)+3x_1x_2x_3x_4 = 0$$ in $\PP^4$. This threefold has 45 nodes, and blowing them up we get $Y_0$ which is a smooth rational variety \cite[Corollary 2.5]{hw01} with $b_2(Y_0) = 61$ \cite[Corollary 2.12]{hw01}. As $Y_0$ is rational, $$h^{2,0}(Y_0)=0,$$ hence $$h^{2,2}(Y_0)= h^{1,1}(Y_0) = 61.$$
Moreover, the exceptional divisor over every node is isomorphic to $\PP^1 \times \PP^1$. Let $X = X_0\times \PP^1$. As $X_0$ only has nodes as singularities, $E_{st}(X_0)$ is a polynomial \cite{schepersade}*{Proposition 5.2}, and therefore $E_{st}(X)$ is a polynomial as well. We have: \begin{equation*}\begin{split}a_{2,2}(X) = a_{2,2}(X_0)h^0(\PP^1) + h^{2,1}_{st}(X_0)h^{1,0}(\PP^1)  + h^{1,2}_{st}(X_0)h^{0,1}(\PP^1)+ h^{1,1}_{st}(X_0)h^{1,1}(\PP^1).\end{split}\end{equation*} As \begin{equation*}\begin{split}a_{2,2}(X_0) =& \  h^{2,2}(Y_0) - h^{1,1}(D(1))  =  \ 61 - 2\cdot 45 =  -29,\end{split}\end{equation*} and $$h^{1,1}_{st}(X_0) = h^{1,1}(Y_0) - h^0(D(1)) = 61 - 45 = 16,$$ we obtain that $$a_{2,2}(X) = -29 + 16 = -13.$$ 
Note that \conjectureref{conjecture} holds for $X$ as $$h^{2,2}_{st}(X) = a_{2,2}(X) + \sum_{a_j=1}{h^0(D_j)} = -13 + 45 = 32.$$
\end{example}

\subsection{Outline of the proof}

The proof of \theoremref{main} is contained in Sections \ref{setup} - \ref{proof}. In this section we make reference to what is being proved in each of them.

\subsubsection{}\label{mainnotation} For the rest of the paper we use the following.

\begin{notation}Let $X$ be a fourfold with at most Gorenstein terminal singularities and $C\cup S$ the singular locus. We assume that $C$ has pure dimension 1 and is not empty, and $S$ is the finite set of isolated singularities. Let $f:Y\to X$ be a log-resolution of singularities, which is an isomorphism on the smooth locus of $X$, and $D= \bigcup{D_i}$ the exceptional set. For $D_i\subseteq D$, an irreducible component of $D$, we denote $f\restr{D_i} = g_i: D_i \to C$.  For a general hyperplane section $H\subseteq X$ we denote $H'=f^{-1}(H)$. Note that $f\restr{H'} = f': H'\to H$ is a log-resolution.
\end{notation}

\subsubsection{} The first step of the proof is to get a convenient simplification of $$h^{2,2}_{st}(X) = h^{2,2}(Y) - h^{1,1}(D(1)) + h^0(D(2)) + \sum_{a_j=1}{h^0(D_j)}.$$
Applying \theoremref{dt} we get expressions (\ref{dt4}) and (\ref{dt3}), which are direct sum decompositions of $H^4(Y)$ and $H^2(D(1))$. They include the information of a stratification of $X$ and several local systems. We set the notation for these objects in Section \ref{setup}. \\

In Section \ref{cohomologies} we either give explicit computations of the dimensions of pieces of these expressions, or results showing that the dimensions of some of these in (\ref{dt4}) and (\ref{dt3}) coincide.\\

Finally, in Section \ref{hodgestructures} a description of the subspaces $H^{2,2}(Y)$ and $H^{1,1}(D(1))$ is given.\\

All of the above is put together in Section \ref{proof}. Using the previous results we obtain a simplification of $h^{2,2}_{st}(X)$, and we show $h^{2,2}_{st}(X) - h^{1,1}_{st}(H)\geq 0$. As the second term is nonnegative by \corollaryref{cor3folds}, we obtain the result.

\subsection{Set up}\label{setup} Applying \theoremref{dt} to the maps $f$ and $g_i$ gives the following data: a stratification of the maps and several local systems. It is convenient to set a clear notation for these objects.

\subsubsection{}\label{notation2}The stratification of $f$ includes one of $X$ we describe as follows:

\begin{notation}We have $$X = X_{reg}  \amalg U \amalg S \amalg S'\amalg S'',$$ such that:
\begin{enumerate}[label=(\alph{*}), ref=(\alph{*})]
\item The space $X_{reg}= X\setminus C\cup S$ is the open subvariety consisting of the smooth locus.
\item $U$ is an open set of $C$ contained in the smooth locus of $C$.
\item $S'= \{z_1, \ldots , z_r\}$ and $S''=\{d_1, \ldots, d_s \}$ are a finite set of points in $C$ such that $U \amalg S' \amalg S'' = C$, and $S= \{s_1,\ldots, s_t\}$ are the isolated singularities of $X$. \end{enumerate}
They satisfy the following conditions:
\begin{enumerate}[resume, label=(\alph{*}), ref=(\alph{*})]
\item If $C=\bigcup C_i$ is reducible, $U= \amalg U_i$ where $U_i\subseteq C_i$ is an open subset contained in the smooth locus of $C_i$. 
\item For $d_i \in S''$, $f^{-1}(d_i)$ is 3-dimensional.
\item We have $D= D'\cup D''\cup D'''$ and: \begin{renumerate}\item for $D_i$ a component of $D'$, $f(D_i) = C_j$ for some $j$, \item for $D_j\subseteq D''$, $f(D_j) \in S''$, \item $D'''$ is the disjoint union of the fibers of the isolated singularities. \end{renumerate}
\end{enumerate}
We can assume that for a component $B\subseteq D_i\cap D_j$, if $f(B) = z \in C$, then $z\in S''$ (which implies $D_i \subseteq D''$ or $D_j \subseteq D''$). Indeed, we can blow up the connected components of the double intersections of the irreducible components of $D'$ that contract to a point. An analogous argument works for triple intersections as well. \\

We denote $j:U \into C$ the inclusion map. Let $\nu:\tilde{C} = \amalg\tilde{C_i} \to C$ be the normalization and $k:U \to \tilde{C}$, so that $j = \nu\circ k$. We have a factorization of the map $g_i: D_i \to C$, as a composition of $g_i': D_i \to \tilde{C}$ and $\nu$, for $D_i\subseteq D'$. Note that the normalization $\nu$ is an isomorphism on $U$.
\end{notation}

\subsubsection{}The only intersection complex supported on $X$ in the decomposition of $\derR f_*\CC_Y[4]$ is $IC_X$, as the map $f$ is birational (cf. \cite[Proof of Theorem 2.2.3 d)]{cm05}). The other intersection complexes are supported on the strata of dimension 1 or 0, and for these dimensions the intersection complex of a local system has an easy description. 

\begin{remark}\label{rmkdim1}  For a local system $L$ on $U$, $IC_C(L) \cong j_*L[1]$. If $C$ is not smooth, the following isomorphisms are useful: $$IC_C(L) \cong \nu_*IC_{\tilde{C}}(L) \cong \nu_*k_*L[1].$$
For a local system supported on a zero-dimensional space, the intersection complex is the local system itself.  
\end{remark}

With these, if we apply \theoremref{dt} to $f$, we get the following descriptions:
$$^{\mathfrak{p}}\mathcal{H}^k(\derR f_*\CC_Y[4])[-k] \cong j_*L_{U,k}[1-k] \oplus L_{S,k}[-k] \oplus L_{S',k}[-k] \oplus L_{S'',k}[-k]$$ for $k= -2,-1,1,2$ and $$^{\mathfrak{p}}\mathcal{H}^0(\derR f_*\CC_Y[4])[0] \cong IC_X \oplus j_*L_{U,0}[1] \oplus L_{S,0}[0] \oplus L_{S',0}[0]\oplus L_{S'',0}[0],$$ where the first subindex corresponds to the stratum in which each local system is defined.\\

The cohomology $H^4(Y)$ is isomorphic to $\HH^0(X,\derR f_*\CC_Y[4])$. By applying $\HH^0$ to the expressions above, we get the following direct sum decomposition:

\begin{equation}\label{dt4}\begin{split} & H^0(C, j_*L_{U,1}) \\ H^4(Y)\cong\quad \oplus\ IH^4(X)\ \oplus\ & H^1(C, j_*L_{U,0}) \oplus H^0(S\amalg S'\amalg S'', L_{S,0}\oplus L_{S',0}\oplus L_{S'',0}) \\ \oplus \ & H^2(C,j_*L_{U,-1})
\end{split}\end{equation}
where each line of the sum corresponds to $\HH^0$ of a perverse cohomology of $\derR f_*\CC_Y[4]$.

\subsubsection{} Let $D_i \subseteq D'$ be an irreducible component. The map $g'_i$ goes form a smooth variety of dimension 3 to a smooth curve. The application of the Decomposition Theorem to these kinds of maps is discussed in \cite[Example 2.5]{cm05}: 

\begin{example}\label{excm} We can assume this map is smooth over $U\subseteq \tilde{C}$. The computation says: $$ ^{\pp}\cohH^m(\derR g'_{i*}\CC_{D_i}[3]) \cong k_*R^{2+m}g'_{i*}\CC_{U'_i}[1] \oplus K_i^m$$ where $U'_i= {g_i^{\prime -1}(U)}$. Note that by taking the cohomology of the complexes we get $$R^{m+3}g'_{i*}\CC_{D_i} \cong k_*R^{m+3}g'_{i*}\CC_{U'_i} \oplus K_i^{m},$$ which implies that if we take the stalk at $z\in g'_{i}(D_i) \setminus U$ we have the following isomorphism: $$ H^{m+3}(g_i^{\prime -1}(z)) \cong H^{m+3}(g_i^{\prime -1}(x))^{\ZZ} \oplus (K_i^m)_z,$$ where $\ZZ$ is the monodromy action around the point $z$ and $x\in g'_{i}(D_i) \cap U$ is a nearby point. The above discussion gives a description of the sheaves $K_i^m$. 
\end{example}

Using that $H^2(D_i) = \HH^{-1}(\derR g_{i*}\CC_{D_i}[3]) = \HH^{-1}(\nu_*\derR g_{i*}'\CC_{D_i}[3])$ and that $g_i = \nu \circ g'_i$, we obtain the following direct sum decomposition: 

\begin{equation}\begin{split}\label{dt3}H^2(D_i)\cong H^2(C, j_*g_{i*}\CC_{U'}) \oplus  H^1(C, j_*R^1g_{i*}\CC_{U'}) \oplus H^0(S, \nu_*K_i^{-1}) \oplus  H^0(U, R^2g_{i*}\CC_{U'}).
\end{split}\end{equation}

\subsection{Computation of the cohomologies}\label{cohomologies} In this section we discuss the summands in expressions (\ref{dt4}) and (\ref{dt3}). We also discuss the much simpler expression (\ref{dt3b}), which we obtain by applying \theoremref{dt} to the map $f'$.

\subsubsection{}\label{cohdt4} We use the notation of Section \ref{setup}.

\begin{lemma}\label{lu1} The space of global sections of $L_{U,1}$ has the following dimension: $$h^0(C, j_*L_{U,1}) = h^0(U,L_{U,1}) = h^0(D'(1)).$$
\end{lemma}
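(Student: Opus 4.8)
The goal is to identify $L_{U,1}$ concretely and show its global sections are computed by $h^0(D'(1))$. The plan is to recall from the Decomposition Theorem setup that $L_{U,1}$ is the local system on $U$ appearing in the perverse cohomology $^{\mathfrak{p}}\cohH^{1}(\derR f_*\CC_Y[4])$; by the standard identification for maps to a curve (as in \exampleref{excm} applied to $f$ restricted over $U$), this local system is $R^{2}f_*\CC$ of the relevant family over $U$. Since $f$ restricted to the preimage of $U$ is, fiberwise, the union of the maps $g_i: D_i \to C$ for $D_i\subseteq D'$ together with the smooth part, the first equality $h^0(C, j_*L_{U,1}) = h^0(U,L_{U,1})$ is simply the statement that global sections of the pushforward $j_*L_{U,1}$ along $j:U\into C$ agree with global sections of $L_{U,1}$ on $U$, which holds because $j_*$ for an open inclusion satisfies $H^0(C,j_*\shF)=H^0(U,\shF)$.

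For the second equality, I would argue as follows. A global section of a local system $L_{U,1}$ over $U$ is precisely a monodromy-invariant element of the stalk, i.e.\ $h^0(U,L_{U,1}) = \dim (L_{U,1})_x^{\pi_1(U,x)}$ for a basepoint $x\in U$. By the Global Invariant Cycle Theorem (\theoremref{gict}) applied to the smooth projective family obtained by restricting each $g_i$ over $U$, the invariants $H^2(\text{fiber})^{\pi_1(U,x)}$ coincide with the image of the restriction from a smooth compactification, and in degree $2$ with $\QQ$-coefficients this invariant subspace is generated by the classes of the components of the fiber. The fiber of $D'\to C$ over a general point $x\in U$ meets $D'(1)=\coprod D_i$ (restricted over $U$) in a configuration whose $H^2$-invariants match; more precisely, since over $U$ each $g_i$ is smooth with fibers whose second cohomology invariants are spanned by algebraic classes, counting these invariant classes across all components of $D'$ gives exactly $h^0(D'(1))$.

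The cleanest route is probably to compare directly with \lemmaref{lu1}'s analogue at the level of the generic fiber: the local system $L_{U,1}$ is built from the $R^2$ of the smooth family $g_i|_{U}: D_i\cap f^{-1}(U)\to U$, so $(L_{U,1})_x = \bigoplus_i H^2(g_i^{-1}(x))$ of the invariant part, and by the invariant cycle theorem each summand's invariants are exactly the monodromy-invariant $(1,1)$-classes, which correspond bijectively to the connected components contributing to $h^0(D'(1))$ once one recalls that over $U$ the components $D_i\subseteq D'$ are precisely those mapping onto curves. I would therefore assemble the chain $h^0(C,j_*L_{U,1}) = h^0(U,L_{U,1}) = \dim(L_{U,1})_x^{\pi_1} = h^0(D'(1))$.

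The main obstacle I anticipate is the last identification $\dim(L_{U,1})_x^{\pi_1(U,x)} = h^0(D'(1))$: one must check carefully that the monodromy-invariant part of $H^2$ of the generic fiber of $D'\to C$ has dimension equal to the number of connected components of $D'(1)$, rather than something larger coming from $(1,1)$-classes that vary. This is where the Global Invariant Cycle Theorem does the real work, ensuring invariants come from global (hence algebraic, and in fact locally constant at the level of component classes) cohomology, so that counting invariants reduces to counting components. Keeping track of the normalization $\nu$ and the factorization $j=\nu\circ k$ from \S\ref{notation2} is a bookkeeping subtlety but should not cause genuine trouble.
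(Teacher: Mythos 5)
There is a genuine gap, and it sits exactly where you anticipated. The first problem is the identification of $L_{U,1}$: since $^{\pp}\cohH^1(\derR f_*\CC_Y[4])[-1]$ contributes $j_*L_{U,1}$ to $\cohH^0(\derR f_*\CC_Y[4]) = R^4f_*\CC_Y$ (and $\cohH^0(IC_X)=0$ by the support conditions), the stalk of $L_{U,1}$ at $x\in U$ is $H^4(f^{-1}(x))$ --- the \emph{top} cohomology of the two-dimensional fiber --- not $H^2$ of any family. Your proposed identification with $R^2f_*\CC$ is off by two, and this is fatal to the counting step: by the Global Invariant Cycle Theorem the $\pi_1(U,x)$-invariants of $H^2(\text{fiber})$ equal the image of $H^2(D_i)$, which contains the restriction of any ample class and is in general far larger than the span of component classes (for a trivial family $S\times U\to U$ the invariants are all of $H^2(S)$, while $h^0(D_i)=1$). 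So the claim that "in degree $2$ the invariant subspace is generated by the classes of the components of the fiber" is false, and the chain $\dim(L_{U,1})_x^{\pi_1}=h^0(D'(1))$ does not follow as written.

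The underlying strategy is salvageable once the degree is corrected: $H^4$ of the SNC surface $f^{-1}(x)$ is spanned by the fundamental classes of its irreducible components (this splitting, $H^4(f^{-1}(x))\cong\bigoplus_i H^4(g_i^{-1}(x))$, is proved in the paper via the spectral sequence of the complex $Q=[\CC_{D(1)}\to\cdots\to\CC_{D(4)}]$ and still needs an argument), the monodromy acts by permuting the components of $g_i^{-1}(x)$, and it acts transitively for each irreducible $D_i\subseteq D'$ by the Stein factorization, so each $D_i$ contributes exactly one invariant and the total is $h^0(D'(1))$. Note, however, that this is not the paper's route: the paper avoids computing monodromy invariants directly and instead uses the relative Hard Lefschetz isomorphism $H^0_{-2}(D_i)\cong H^4_2(D_i)$ of \theoremref{hlpc} to identify $H^0(C,j_*R^4g_{i*}\CC_{U'})$ with $H^0(U',\CC_{U'})\cong\CC$. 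As submitted, though, your argument does not establish the lemma.
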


\begin{proof}
As $\dim\Supp{\cohH^0(IC_X)} < 0$, we have $$\cohH^0(\derR f_*\CC_Y[4]) = R^4f_*\CC_Y \cong j_*L_{U,1} \oplus L_{S,0} \oplus L_{S',0} \oplus L_{S'', 0},$$ and hence $$ R^4f_*\CC_Y \restr{U} \cong L_{U,1}.$$
This means that for $x\in U$,  $$H^4(f^{-1}(x)) \cong (R^4f_*\CC_Y)_x \cong (L_{U,1})_x.$$
Let $g: D \to C\amalg S$ be the restriction of $f$ to $D$. As $R^4f_*\CC_Y$ is supported on $C\amalg S$, this sheaf is isomorphic to $R^4g_*\CC_D$. The map $\CC_D \to \CC_{D(1)}$ induces a quasi-isomorphism $$\CC_D \to [0 \to \CC_{D(1)} \to \ldots \to \CC_{D(4)} \to 0]=:Q$$ because $D$ is a space with simple normal crossings (see \cite[Lemma and Definition 3.2.31]{elzeinetal}). Using the spectral sequence associated to the trivial filtration of the complex $Q$ we compute $R^4g_*\CC_D$. It is defined as:
$$E_1^{p,q} = R^qg_*\CC_{D(p+1)} \Rightarrow R^{p+q}g_*Q.$$
We look at $p+q=4$. Note that for $x\in U$, $(R^4g_*\CC_{D(2)})_x = 0$, as $\dim{f^{-1}(x)} = 1$. 
Therefore, the maps in the complex $$ 0 \to R^4g_*\CC_{D(1)} \to R^4g_*\CC_{D(2)},$$ which corresponds to $$ E_1^{-1,4} \to E_1^{0,4} \to E_1^{1,4},$$ are the zero map on $U$. As $E_1^{2,3} = 0$ by dimension reasons, 
$$R^4g_*\CC_{D(1)}\restr{U} \cong E_{\infty}^{0,4}\restr{U}.$$
The terms $E_1^{1,3}, E_1^{2,2}$ and $E_1^{3,1}$ are all zero on $U$ by a similar reasoning.
This means that we obtain the isomorphisms \begin{equation}\label{lu1iso}L_{U,1} \cong R^{4}g_*Q\restr{U} \cong R^4g_*\CC_{D(1)}\restr{U}.\end{equation}

Suppose first $C$ is smooth and irreducible. \exampleref{excm}, implies that $$H^0(C,j_*R^4g_{i*}\CC_{U'}) = \HH^{-1}(C,\ ^{\pp}\cohH^2(\derR g_{i*}\CC_{D_i}[3])) \cong H^4_2(D_i),$$
where the last term is the perverse cohomology group with respect to $g_i$ (see \definitionref{perverse cohomology}). Using \theoremref{hlpc}\ref{hla}, we get that $$H^0_{-2}(D_i) \cong H^4_2(D_i),$$ and we also have that:
\begin{equation*}\begin{split} H^0_{-2}(D_i) = \HH^{-1}(C,\ ^{\pp}\cohH^{-2}(\derR g_{i*}\CC_{D_i}[3])) = H^0(C,j_*g_{i*}\CC_{U'})\\ = H^0(U, g_{i*}\CC_{U'}) \cong H^0(U', \CC_{U'}).
\end{split}\end{equation*}
Hence, 
 $$ H^4(U, R^4g_{i*}\CC_{U'}) \cong H^0(C,j_*R^4g_{i*}\CC_{U'}) \cong H^0(U', \CC_{U'}),$$ and $$ \dim{H^4(U, R^4g_{i*}\CC_{U'}) } = 1.$$

To conclude, since we have $$H^0(U, L_{U,1}) \cong H^0(U, R^4g_*\CC_{D(1)}\restr{U})$$ by (\ref{lu1iso}),  we obtain that $$\dim{H^0(U, L_{U,1})} = \sum{\dim{H^0(U, R^4g_*\CC_{D_i}\restr{U})}} = h^0(D'(1)).$$ 

Assume now that $C$ is an arbitrary curve.  For every $D_i\subseteq D'$, the map $g_i'$ surjects onto a smooth and irreducible curve $\tilde{C_j}$. We can use the same analysis for this map, and get the same conclusion.

\end{proof}

\begin{lemma}\label{lu-1}The following holds: $$h^2(C, j_*L_{U,-1}) = h^0(D'(1)).$$

\end{lemma}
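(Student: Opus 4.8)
The plan is to establish the claimed equality $h^2(C, j_*L_{U,-1}) = h^0(D'(1))$ by mirroring the structure of the previous lemma but invoking Hard Lefschetz for perverse cohomology groups to relate the local system $L_{U,-1}$ in degree $-1$ to the top-degree local system $L_{U,1}$ already computed in \lemmaref{lu1}. The key observation is that \theoremref{hlpc}\ref{hla} provides an isomorphism $\eta^k: H^j_{-k}(Y) \cong H^{j+2k}_k(Y)$, and the analogous statement for the maps $g_i$ should exchange the summand coming from $L_{U,-1}$ with the one coming from $L_{U,1}$, whose global sections were shown to have dimension $h^0(D'(1))$.

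First I would reduce to the case where $C$ is smooth and irreducible, exactly as at the end of the proof of \lemmaref{lu1}, since the normalization $\nu$ is an isomorphism on $U$ and $L_{U,-1}$ is a local system on $U$; the general case then follows by summing over the components $\tilde{C_j}$. Next, I would identify $h^2(C, j_*L_{U,-1})$ with a perverse cohomology group. By \remarkref{rmkdim1}, $IC_C(L_{U,-1}) \cong j_*L_{U,-1}[1]$, so $H^2(C, j_*L_{U,-1}) = \HH^1(C, j_*L_{U,-1}[1]) = \HH^1(C,\ ^{\pp}\cohH^{-1}(\derR g_{i*}\CC_{D_i}[3])[1])$, which by \definitionref{perverse cohomology} is the perverse cohomology group $H^4_{-2}(D_i)$ (summed over the relevant components). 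Then I would apply \theoremref{hlpc}\ref{hla} in the form $\eta^2: H^0_{-2}(D_i) \cong H^4_{2}(D_i)$, together with the dual isomorphism exchanging the roles of the extreme perverse degrees, to relate this to the space $H^0(C, j_*L_{U,1})$ computed in \lemmaref{lu1}. Since that space has dimension $h^0(D'(1))$, and the Hard Lefschetz isomorphism is an isomorphism of vector spaces, the dimensions agree.

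The main obstacle I anticipate is bookkeeping the precise relationship between the local systems $L_{U,-1}$ and $L_{U,1}$ and making sure the perverse-degree shift matches what Hard Lefschetz delivers. Concretely, one must verify that the summand of $H^2(D_i)$ in \eqref{dt3} labeled by $H^2(C, j_*g_{i*}\CC_{U'})$ (the degree $-2$ piece of the decomposition of $g_i$) is genuinely the space computed by \lemmaref{lu1} in its top degree, so that the self-duality of the decomposition under $\eta^2$ pins down the dimension. In other words, the care needed is in checking that $L_{U,-1}$ for $f$ restricts, via the factorization $g_i = \nu \circ g'_i$, to $R^0g_{i*}\CC_{U'} \cong \CC_{U'}$ on each component, whose rank count reproduces $h^0(D'(1))$ exactly as in the top-degree case. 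Once the identification of perverse summands is secured, the Hard Lefschetz isomorphism gives the equality of dimensions immediately, with no further computation required.
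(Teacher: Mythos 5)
Your overall strategy --- use Hard Lefschetz for perverse cohomology groups to trade the degree $-1$ local system for the degree $+1$ one and then quote \lemmaref{lu1} --- is exactly the paper's strategy. But the concrete chain of identifications you propose is wrong, because you conflate the two decompositions in play. The local systems $L_{U,\pm 1}$ are defined by applying the Decomposition Theorem to $f\colon Y\to X$, so $H^2(C,j_*L_{U,-1})=\HH^1\bigl(X,\ ^{\pp}\cohH^{-1}(\derR f_*\CC_Y[4])\bigr)$ is the perverse cohomology group $H^4_{-1}(Y)$ of $Y$ relative to $f$; it is not a hypercohomology group of $^{\pp}\cohH^{-1}(\derR g_{i*}\CC_{D_i}[3])$. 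That latter complex is $k_*R^1g'_{i*}\CC_{U'_i}[1]\oplus K_i^{-1}$, whose local system is $R^1g_{i*}\CC_{U'}$, not $L_{U,-1}$; and even granting your identification the index is off ($\HH^1(C,P[1])=\HH^2(C,P)$ gives $H^4_{-1}(D_i)$, not $H^4_{-2}(D_i)$, and $H^4_{-2}(D_i)=H^4(C,j_*g_{i*}\CC_{U'})$ actually vanishes since $C$ is a curve, so your route would output $0$). Likewise, your closing claim that $L_{U,-1}$ should restrict to $R^0g_{i*}\CC_{U'}\cong\CC_{U'}$ is not right: $j_*g_{i*}\CC_{U'}$ is the degree $-2$ piece of the $g_i$-decomposition, and its $\HH^1$ is the separate summand $H^2(C,j_*g_{i*}\CC_{U'})$ of $H^2(D_i)$ in (\ref{dt3}), which the paper treats in \lemmaref{jgqu}; identifying it with a summand of $H^4(Y)$ from (\ref{dt4}) is a category error, even though the two counts happen to agree numerically.

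The repair is to stay entirely inside the $f$-decomposition, which is what the paper does: $H^2(C,j_*L_{U,-1})\cong H^4_{-1}(Y)$, then \theoremref{hlpc}\ref{hlpa} (cup with $L=f^*A$) gives $H^4_{-1}(Y)\cong H^2_{-1}(Y)$, then \theoremref{hlpc}\ref{hla} (cup with an ample $\eta$ on $Y$) gives $H^2_{-1}(Y)\cong H^4_1(Y)\cong H^0(U,L_{U,1})$, and \lemmaref{lu1} finishes. No identification of $L_{U,-1}$ with any $R^qg_{i*}\CC_{U'}$ is needed, and no reduction to $C$ smooth is required since all three isomorphisms live over $X$.
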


\begin{proof}This is a simple application of \theoremref{hlpc}. We have $$H^4_1(Y) = \HH^{-1}(X,\ ^{\pp}\cohH^1(\derR f_*\CC_Y[4])) \cong H^0(U, L_{U,1}),$$ and using \theoremref{hlpc}\ref{hla} we get that $$H^2_{-1}(Y) \cong H^4_1(Y).$$ 
\theoremref{hlpc}\ref{hlpa} implies that $$H^2_{-1}(Y) \cong H^4_{-1}(Y),$$
 and finally note that $$H^4_{-1}(Y) = \HH^{1}(X,\ ^{\pp}\cohH^{-1}(\derR f_*\CC_Y[4])) \cong H^2(C, j_*L_{U,-1}).$$ 

The result follows from \lemmaref{lu1}, as $$ \dim{H^4_1(Y)}= h^0(U, L_{U,1}) = h^0(D'(1)).$$

\end{proof}

\begin{remark}\label{rmk1} In the proof we have shown that there is a chain of isomorphisms of type $(1,1)$ between these pure Hodge structures. This is used in Section \ref{hodgestructures} when we discuss the subspace $H^{2,2}(Y)$. 
\end{remark}

\begin{lemma}\label{lu0} The following terms in (\ref{dt4}) and (\ref{dt3}) have the same dimension: $$h^1(C, j_*L_{U,0}) = \sum{h^1(C, j_*R^1g_{i*}\CC_{U'})}$$

\end{lemma}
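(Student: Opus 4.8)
The plan is to identify both sides over the open curve $U$: I would show that the local system $L_{U,0}$ and the direct sum $\bigoplus_i R^1g_{i*}\CC_{U'}$ agree as local systems on $U$ up to a Tate twist, after which applying $j_*$ and taking $H^1(C,-)$ yields the stated equality of dimensions. Thus the lemma matches a summand of $H^4(Y)$ in (\ref{dt4}) with summands of $H^2(D(1))$ in (\ref{dt3}), which is exactly what the later simplification of $h^{2,2}_{st}(X)$ requires.

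First I would identify $L_{U,0}$ with a higher direct image. Over $U$ the point-supported summands $L_{S,0},L_{S',0},L_{S'',0}$ vanish, and the summand $IC_C(L_{U,0})=j_*L_{U,0}[1]$ restricts to $L_{U,0}[1]$; moreover the strict support condition for the intersection complex gives $\dim\Supp\cohH^{-1}(IC_X)<1$, so $\cohH^{-1}(IC_X)\restr{U}=0$. Since for $k\neq 0$ the perverse summand $^{\pp}\cohH^k$ contributes to $\cohH^{-1}(\derR f_*\CC_Y[4]\restr{U})$ only in degrees other than $-1$, taking $\cohH^{-1}$ in \theoremref{dt} yields $$L_{U,0}\cong \cohH^{-1}(\derR f_*\CC_Y[4])\restr{U}=R^3f_*\CC_Y\restr{U}.$$ By the normalization of the resolution arranged in Section~\ref{setup}, the components $D_i\subseteq D'$ are pairwise disjoint over $U$, so $f^{-1}(U)=\coprod_i U'_i$ and hence $\derR f_*\CC_Y\restr{U}\cong\bigoplus_i \derR g_{i*}\CC_{U'_i}\restr{U}$. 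Passing to $R^3$ then gives $$L_{U,0}\cong\bigoplus_i R^3g_{i*}\CC_{U'}\restr{U}.$$

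It remains to replace $R^3$ by $R^1$. Over $U$ the map $g_i$ is smooth and projective with $2$-dimensional fibers (as in \exampleref{excm}), so relative Hard Lefschetz, i.e.\ cup product with the relative hyperplane class, produces an isomorphism of local systems $R^1g_{i*}\CC_{U'}\cong R^3g_{i*}\CC_{U'}$ up to a Tate twist, which affects neither dimensions nor $H^1$. Combining the three displays gives $j_*L_{U,0}\cong\bigoplus_i j_*R^1g_{i*}\CC_{U'}$ as constructible sheaves on $C$, and taking $H^1(C,-)$ yields $h^1(C,j_*L_{U,0})=\sum_i h^1(C,j_*R^1g_{i*}\CC_{U'})$.

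The main obstacle is the identification $L_{U,0}\cong R^3f_*\CC_Y\restr{U}$: one must verify that the intersection-complex summand of $^{\pp}\cohH^0$ contributes nothing in the degree carrying $L_{U,0}$ over $U$ (this is exactly the strict support condition $\dim\Supp\cohH^{-1}(IC_X)<1$), and that the point-supported summands do not interfere. Once $L_{U,0}$ is pinned down, the disjointness over $U$ and the fiberwise Hard Lefschetz step are routine.
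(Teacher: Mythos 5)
There is a genuine gap in the first half of your argument. You claim that, after the normalization of the resolution in Section \ref{setup}, the components $D_i\subseteq D'$ are pairwise disjoint over $U$, so that $f^{-1}(U)=\coprod_i U_i'$ and hence $R^3f_*\CC_Y\restr{U}\cong\bigoplus_i R^3g_{i*}\CC_{U'}$. This is false in general: the set-up only blows up those components of $D_i\cap D_j$ that are contracted to points of $C$, and says nothing about components $B_{ij}\subseteq D_i\cap D_j$ dominating $C$. Such components do occur --- the fiber $f^{-1}(x)$ over $x\in U$ is a simple normal crossings surface whose components $g_i^{-1}(x)$ genuinely intersect, which is precisely why condition $(*)$ contains a clause about $f\restr{B_{ij}}$ and why $h^0(D'(2))$ enters the proof of \theoremref{main}. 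Consequently the stalk $(R^3f_*\CC_Y)_x=H^3(f^{-1}(x))$ is not $\bigoplus_i H^3(g_i^{-1}(x))$ on the nose: by the Mayer--Vietoris spectral sequence for the normal crossings fiber it contains the extra graded piece
$${\rm Gr}^W_2H^3\bigl(f^{-1}(x)\bigr)\cong\coker\Bigl(\bigoplus H^2(g_i^{-1}(x))\to\bigoplus H^2(g_i^{-1}(x)\cap g_j^{-1}(x))\Bigr),$$
and the real content of this step is showing that this cokernel vanishes. The paper does so by observing that $f^{-1}(x)$ is the exceptional fiber of a resolution of a threefold with isolated singularities (a transverse hyperplane section of $X$ through $x$) and invoking \cite[Corollary 1.12]{St83}; without this input your identification of $L_{U,0}$ with $\bigoplus_i R^3g_{i*}\CC_{U'}\restr{U}$ does not follow.

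The second half of your argument is fine: once one knows $j_*L_{U,0}\cong\bigoplus_i j_*R^3g_{i*}\CC_{U'}$, replacing $R^3$ by $R^1$ via fiberwise Hard Lefschetz for the smooth projective family $g_i\restr{U'}$ is a legitimate (and slightly more direct) substitute for the paper's global argument, which instead identifies $H^4_1(D_i)\cong H^2_{-1}(D_i)$ by \theoremref{hlpc} and cancels the point-supported summands $H^0(S',K_i^{\pm1})$ by duality.
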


\begin{proof}Using that $R^3f_*\CC_Y = \cohH^{-1}(\derR f_*\CC_Y[4])$, and (\ref{dt4}), we have $$R^3f_*\CC_Y \cong \cohH^{-1}(IC_X) \oplus j_*L_{U,0} \oplus L_{S,-1} \oplus L_{S', -1}.$$

As $\dim\Supp{\cohH^{-1}(IC_X) < 1}$, it is supported on a finite set of points, and we can assume they are not in $U$. We use again the quasi-isomorphism $$\CC_D \to [0 \to \CC_D(1) \to \ldots \to \CC_{D(4)} \to 0]=:Q,$$ and the spectral sequence $$E_1^{p,q} = R^qg_*\CC_{D(p+1)} \Rightarrow R^{p+q}g_*Q$$ (see the proof of \lemmaref{lu1}).
We consider the terms with $p+q = 3$. For $x\in U$, $(R^3g_*\CC_{D(2)})_x=0$ by dimension reasons. 
The maps of the complex $$ 0 \to R^3g_*\CC_{D(1)} \to R^3g_*\CC_{D(2)},$$ which corresponds to $$ E_1^{-1,3} \to E_1^{0,3} \to E_1^{1,3},$$ are the zero map on $U$. The term $E_1^{2,2}\restr{U} = 0$ as well. We get $$R^3g_*\CC_{D(1)}\restr{U} \cong E_{\infty}^{0,3}\restr{U}.$$
Analogous arguments show $E_1^{2,1}$ and $E_1^{3,0}$ are all zero on $U$.\\

Consider the maps $$ R^2g_*\CC_{D(1)} \to R^2g_*\CC_{D(2)} \to R^2g_*\CC_{D(3)},$$ which correspond to $$ E_1^{0,2}\to E_1^{1,2} \to E_1^{2,2}.$$ For $x\in U$, the map on stalks corresponds to the alternate sum of restriction maps $$\bigoplus H^2(g_i^{-1}(x)) \stackrel{\alpha}{\to} \bigoplus H^2(g_i^{-1}(x) \cap g_j^{-1}(x)).$$ The space $f^{-1}(x)$ is a surface with simple normal crossings, the fibers $g_i^{-1}(x)$ are smooth, and are the components of $f^{-1}(x)$. Therefore, $$\ker{\alpha} \cong {\rm Gr}^W_2(H^2(f^{-1}(x)))$$ and $$\coker{\alpha} \cong {\rm Gr}^W_2(H^3(f^{-1}(x)))$$ (see (\ref{snchp})).
The surface $f^{-1}(x)$ shows up in a resolution of a threefold with isolated singularities, and hence $$\coker{\alpha} = 0$$ \cite[Corollary 1.12]{St83}. This means that $E_2^{1,2}\restr{U} = 0$.\\

We obtain that $$R^3g_*\CC_{D(1)}\restr{U} \cong L_{U,0}.$$ In particular, \begin{equation}\label{R31}\dim{H^1(C, j_*R^3g_*\CC_{U'})} = \dim{H^1(C, j_*L_{U,0})}.\end{equation}
Suppose $C$ is smooth and irreducible. We have: $$H^4_1(D_i) \cong H^1(C, j_*R^3g_*\CC_{U'}) \oplus H^0(S', K_i^1)$$ and $$H^2_{-1}(D_i) \cong H^1(C, j_*R^1g_*\CC_{U'}) \oplus H^0(S', K_i^{-1}),$$ and they are isomorphic by \theoremref{hlpc}\ref{hla}. We also have $$H^0(S', K_i^1) \cong H^0(S', K_i^{-1})$$ as they are dual to each other (see for instance \cite[Section 4.4]{cm07}). Therefore, we get \begin{equation}\label{R32}\dim{H^1(C, j_*R^3g_{i*}\CC_{U'})} = \dim{H^1(C, j_*R^1g_{i*}\CC_{U'})}\end{equation}
The result follows from (\ref{R31}) and (\ref{R32}).\\

With no restriction on $C$, we use the map $g_i':D_i \to \tilde{C}$. By the previous analysis $$\dim{H^1(\tilde{C}, k_*R^3g_{i*}'\CC_{U'})} = \dim{H^1(\tilde{C}, k_*R^1g_{i*}'\CC_{U'})}.$$ Since $$\dim{H^1(\tilde{C}, k_*R^3g_{i*}'\CC_{U'})} = \dim{H^1(C, j_*R^3g_{i*}'\CC_{U'})}$$ and $$\dim{H^1(\tilde{C}, k_*R^1g_{i*}'\CC_{U'})} = \dim{H^1(C, j_*R^1g_{i*}'\CC_{U'})},$$because $\nu$ is finite, the result follows.

\end{proof}

This concludes the discussion of terms which include intersection complexes supported on the 1-dimensional strata. We examine next those with local systems supported on the 0-dimensional strata.

\begin{lemma}\label{ls0} The following dimensions of terms in (\ref{dt4}) and (\ref{dt3}) coincide: $$ h^0(S', L_{S',0}) = \sum{h^0(S', \nu_*K_i^{-1})}.$$

\end{lemma}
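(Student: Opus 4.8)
The plan is to follow the same template as the end of the proof of \lemmaref{lu0}: identify the skyscraper $L_{S',0}$ with a piece of top-degree fiber cohomology sitting inside $R^4f_*\CC_Y$, decompose that cohomology along the irreducible components $D_i\subseteq D'$, and then convert the resulting $K_i^{1}$ into $K_i^{-1}$ by the self-duality already invoked in \lemmaref{lu0}.

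First I would recall from the proof of \lemmaref{lu1} that $\cohH^0(\derR f_*\CC_Y[4]) = R^4f_*\CC_Y \cong j_*L_{U,1}\oplus L_{S,0}\oplus L_{S',0}\oplus L_{S'',0}$, so that $L_{S',0}$ is exactly the skyscraper part of $R^4f_*\CC_Y$ over $S'$. Since $f$ is proper, proper base change gives $(R^4f_*\CC_Y)_z = H^4(f^{-1}(z))$, and as the summands $L_{S,0}, L_{S'',0}$ are supported away from $S'$, for $z\in S'$ one has $\dim (L_{S',0})_z = \dim H^4(f^{-1}(z)) - \dim (j_*L_{U,1})_z$.

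The geometric heart of the argument is that over $S'$ the fiber dimension does not jump: for $z\in S'$ the fiber $f^{-1}(z)=\bigcup_i g_i^{-1}(z)$ is still $2$-dimensional (only over $S''$ is it $3$-dimensional), so its top cohomology is additive over the $2$-dimensional components, $H^4(f^{-1}(z)) = \bigoplus_i H^4(g_i^{-1}(z))$, the sum running over those $D_i\subseteq D'$ with $z\in f(D_i)$; the components of $D''$ and $D'''$ do not meet $f^{-1}(S')$. The same holds for a nearby $x\in U$, and because the local monodromy around $z$ permutes components of $g_i^{-1}(x)$ only within the fixed divisor $D_i$, it preserves this decomposition, giving $(j_*L_{U,1})_z = \bigoplus_i H^4(g_i^{-1}(x))^{\ZZ}$. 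Comparing with the stalk formula of \exampleref{excm} in the case $m=1$, namely $(K_i^1)_z = H^4(g_i^{-1}(z))/H^4(g_i^{-1}(x))^{\ZZ}$, I get $\dim (L_{S',0})_z = \sum_i \dim (K_i^1)_z$, hence $h^0(S',L_{S',0}) = \sum_i h^0(S',\nu_*K_i^1)$. The self-duality $K_i^1\cong K_i^{-1}$ of these skyscrapers, used already in \lemmaref{lu0} (cf. \cite[Section 4.4]{cm07}), then yields $h^0(S',\nu_*K_i^1)=h^0(S',\nu_*K_i^{-1})$ and finishes the count; the non-irreducible case is handled by passing to $g_i'$ and the normalization $\nu$ as at the end of \lemmaref{lu1} and \lemmaref{lu0}.

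The main obstacle is the compatibility of the two separate applications of the Decomposition Theorem — one to $f$, producing $L_{S',0}$, and one to each $g_i'$, producing the $K_i^m$ — at the points of $S'$: I must check not merely that the two sides have equal total dimension, but that the top-degree splitting $H^4(f^{-1}(z))=\bigoplus_i H^4(g_i^{-1}(z))$ is compatible with splitting off the monodromy invariants, so that $(L_{S',0})_z$ matches $\bigoplus_i (K_i^1)_z$ summand by summand. The decisive point making this work is precisely that the fiber dimension stays equal to $2$ over $S'$ (unlike over $S''$), which is what forces top cohomology to behave additively.
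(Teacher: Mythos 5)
Your proposal is correct and follows essentially the same route as the paper: both identify $L_{S',0}$ and $(\nu_*K_i^1)_z$ as the complements of the $j_*$-parts in the stalks of $R^4f_*\CC_Y$ and $R^4g_{i*}\CC_{D_i}$ at $z\in S'$, show that $H^4(f^{-1}(z))$ splits as $\bigoplus_i H^4(g_i^{-1}(z))$, and finish with the self-duality $K_i^1\cong K_i^{-1}$. The only (cosmetic) difference is that the paper establishes the splitting via the spectral sequence of the normal-crossings complex $Q$, using explicitly the setup assumption that no component of $D_i\cap D_j$ is contracted to a point of $S'$, whereas you invoke additivity of top-degree cohomology of the $2$-dimensional fiber --- the same assumption is what rules out a shared $2$-dimensional component being counted twice, so you should cite it rather than only the non-jumping of fiber dimension.
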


\begin{proof} Let $z\in S'$. As explained in \lemmaref{lu1}, $$(R^4f_*\CC_Y)_z \cong (j_*L_{U,1})_z \oplus L_{z, 0}$$ and  $$(R^4g_{i*}\CC_{D_i})_z \cong (j_*R^4g_{i*}\CC_{U'})_z \oplus (\nu_*K_i^1)_z.$$ Using the quasi-isomorphism $$\CC_D \to Q,$$ and the spectral sequence $$E_1^{p,q} = (R^qg_*\CC_{D(p+1)})_z \Rightarrow (R^{p+q}g_*Q)_z,$$ we compute $(R^4g_*Q)_z$ (see proof of \lemmaref{lu1}).\\

We look at the terms with $p+q =4$. The maps $$ 0 \to (R^4g_*\CC_{D(1)})_z \to (R^4g_*\CC_{D(2)})_z,$$ corresponding to $$E_1^{-1,4} \to E_1^{0,4} \to E_1^{1,4},$$ are the zero map. Indeed, if $(R^4g_*\CC_{D(2)})_z = H^4(g_i^{-1}(z) \cap g_j^{-1}(z)) \neq 0$, then a component of $D_i\cap D_j$ is contracted to a point. By assumption this is not possible, because $z\notin S''$. As $E_1^{2,3} = 0$ by the same reason, we see that $$E_1^{0,4}= E_{\infty}^{0,4}.$$
In the same way, $E_1^{1,3}, E_1^{2,2}$ and $E_1^{3,1}$ are all zero. Hence $$ (R^4g_*\CC_{D(1)})_z \cong (R^4g_*Q)_z\cong (R^4f_*\CC_Y)_z.$$ 

By \lemmaref{lu1} we have $$ L_{U,1} \cong \bigoplus R^4g_{i*}\CC_{U'},$$ which implies $$\dim{H^0(S', L_{S',0})} = \sum\dim{H^0(S', \nu_*K_i^{1})}$$ by adding the dimension of the stalks of every $z\in S'$. Using $$H^0(S', \nu_*K_i^{-1}) \cong H^0(S', \nu_*K_i^1)$$ the result follows.
\end{proof}


\begin{lemma}\label{ls00}For $d\in S''$ \begin{equation*} \begin{split}\dim{H^{2,2}(H^0(L_{d,0})}) & = h^{1,1}(f^{-1}(d)(1)) - h^0(f^{-1}(d)(2))\\ - &\  \sum_{\substack{ x_j \text{ near } \\ d_j\in \nu^{-1}(d)}}{(\dim{H^{2,2}(H^4(f^{-1}(x_j))^{\ZZ})})}\end{split}\end{equation*} where $f^{-1}(d)(1)$ and $f^{-1}(d)(2)$ is the usual notation for a simple normal crossings divisor (see Section \ref{d(p)}).

\end{lemma}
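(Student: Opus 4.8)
Throughout write $F := f^{-1}(d)$, a $3$-dimensional simple normal crossings variety, and $F(k) = f^{-1}(d)(k)$ for the disjoint union of its $k$-fold intersections, so that $F(1)$ is a union of smooth projective $3$-folds, $F(2)$ a union of surfaces and $F(3)$ a union of curves. The plan is to read $L_{d,0}$ off from the cohomology of the fibre $F$, using the Decomposition Theorem to split $H^4(F)$, and then to subtract the contributions of the summands of \theoremref{dt} other than $L_{d,0}$ that survive at $d$.

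First I would compute the stalk of $\derR f_*\CC_Y[4]$ at $d$. Since $f$ is proper, $\cohH^{j}\big((\derR f_*\CC_Y[4])_d\big) = H^{j+4}(F)$, and by \theoremref{dt} this is $\bigoplus_k \cohH^{j-k}\big(({}^{\pp}\cohH^k)_d\big)$. The strict support condition for $IC_X$ gives $\cohH^0(IC_X)_d = \cohH^1(IC_X)_d = 0$; the systems $L_{S,\bullet}$ and $L_{S',\bullet}$ are supported away from $d$; and by \lemmaref{lu1} and \exampleref{excm} the system $L_{U,2}$ is a direct summand of $R^5 f_*\CC_Y|_U$, which vanishes because $\dim f^{-1}(x) = 2$ for $x\in U$. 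Collecting the surviving terms in degrees $j=0,1$ yields
\begin{equation*} H^4(F)\cong (j_*L_{U,1})_d\oplus H^0(L_{d,0}) \qquad\text{and}\qquad H^5(F)\cong H^0(L_{d,1}).\end{equation*}
By the decomposition into Hodge substructures \cite[Remark 2.1.6]{cm05}, $H^0(L_{d,0})$ and $H^0(L_{d,1})$ are substructures of $H^4(Y)$ and $H^5(Y)$, hence pure of weights $4$ and $5$; and $(j_*L_{U,1})_d$ is the space of local monodromy invariants of the weight-$4$, type $(2,2)$ variation $L_{U,1} \cong \bigoplus_i R^4g_{i*}\CC_{U'}$, so it is pure of type $(2,2)$ as well. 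In particular $H^4(F)$ is pure of weight $4$ and $H^5(F)$ is pure of weight $5$.

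Next I would take $(2,2)$-parts. Purity of $H^5(F)$ gives ${\rm Gr}^W_4 H^5(F)=0$; since $F(3)$ is a curve we have $H^4(F(3))=0$, so the weight complex of Section \ref{mhssnc} computes ${\rm Gr}^W_4 H^5(F) = \coker\big(\delta_1\colon H^4(F(1))\to H^4(F(2))\big)$, forcing $\delta_1$ to be surjective. Purity of $H^4(F)$ identifies it with $\ker\delta_1$, and, since passing to $(2,2)$-pieces commutes with taking cohomology of this complex of pure weight-$4$ Hodge structures, together with $\dim H^4(F(2)) = h^0(F(2))$ and $h^{2,2}(F(1)) = h^{1,1}(F(1))$ (Hard Lefschetz on the $3$-folds $F(1)$) this gives $\dim H^{2,2}(H^4(F)) = h^{1,1}(F(1)) - h^0(F(2))$. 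On the other hand, writing $j = \nu\circ k$ and using that $\nu$ is finite together with \exampleref{excm}, the stalk decomposes over the branches of $C$ at $d$ as $(j_*L_{U,1})_d \cong \bigoplus_{d_j\in\nu^{-1}(d)} H^4(f^{-1}(x_j))^{\ZZ}$ with $x_j$ near $d_j$, so its $(2,2)$-dimension is $\sum_{d_j}\dim H^{2,2}\big(H^4(f^{-1}(x_j))^{\ZZ}\big)$. Comparing $(2,2)$-dimensions in the splitting of $H^4(F)$ produces exactly the asserted formula.

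The main obstacle is establishing the two purity statements, and through them the surjectivity of $\delta_1$ (equivalently ${\rm Gr}^W_4 H^5(F)=0$): this is where the hypothesis that $d\in S''$ carries a genuinely $3$-dimensional fibre enters, since it is what forces $L_{U,2}=0$ and collapses $H^5(F)$ onto the single pure summand $H^0(L_{d,1})$. The delicate point is to be certain that the stalk $(j_*L_{U,1})_d$ is pure of type $(2,2)$ — this uses that $L_{U,1}$ is a polarizable variation of pure type $(2,2)$, so that its invariants inherit a pure Hodge structure (via the decomposition into Hodge substructures, or \theoremref{gict}) — after which the remaining identifications are bookkeeping with the weight spectral sequence of Section \ref{mhssnc}.
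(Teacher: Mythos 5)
Your proof is correct and follows essentially the same route as the paper: take the stalk at $d$ of the decomposition theorem to split $H^4(f^{-1}(d))$ as $(j_*L_{U,1})_d\oplus H^0(L_{d,0})$, identify $(j_*L_{U,1})_d$ with the local monodromy invariants over the branches of $C$ at $d$, and read off the $(2,2)$-piece of $H^4(f^{-1}(d))$ from the weight complex of the simple normal crossings fibre. The one place you go beyond the paper's write-up is in justifying $\dim H^{2,2}(H^4(f^{-1}(d))) = h^{1,1}(f^{-1}(d)(1)) - h^0(f^{-1}(d)(2))$, i.e.\ the surjectivity of $\delta_1$ on $(2,2)$-parts, via the purity of $H^5(f^{-1}(d))\cong H^0(L_{d,1})$; the paper leaves that step implicit, citing only the computation of Section \ref{mhssnc}, so your extra argument is a welcome (and correct) filling-in rather than a different approach.
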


\begin{proof} From (\ref{dt4}) we get $$H^4(f^{-1}(d)) \cong (j_*L_{U,1})_d \oplus L_{d, 0}.$$  If $d \notin g_i(D_i)$, then $(j_*R^4g_{i*}\CC_{U'})_d = 0$. Otherwise using the isomorphism (\ref{lu1iso}) we obtain, \begin{equation*}\begin{split} (j_*L_{U,1})_d = (\nu_*k_*L_{U,1})_d = & \ \bigoplus_{d_j\in \nu^{-1}(d)} (k_*L_{U,1})_{d_j} \\ \cong & \ \bigoplus_{d_j\in \nu^{-1}(d)} H^4(f^{-1}(x_j))^{\ZZ}  \end{split}\end{equation*} where $x_j \in g_i(D_i)\cap U$ is near $d_j$, and $\ZZ$ is the monodromy around the point $d_j$.
The result follows from the computation of the $H^{2,2}$ piece of the cohomology of a simple normal crossings divisor (see Section \ref{mhssnc}).

\end{proof}

\begin{lemma}\label{iso} For the local system supported on the isolated singularities of $X$, $$h^{2,2}(H^0(S, L_{S,0})) = h^{1,1}(D'''(1)) - h^0(D'''(2)).$$

\end{lemma}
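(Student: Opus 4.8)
The plan is to identify $H^0(S,L_{S,0})$ with the top cohomology $H^4(D''')$ of the exceptional divisors lying over the isolated singularities, and then to run the simple normal crossings computation of Section~\ref{mhssnc} exactly as in \lemmaref{ls00}, with the simplification that no nearby-curve contribution appears.

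First I would compute the relevant stalks. Write $D''' = \coprod_{s\in S} f^{-1}(s)$. For $s\in S$ proper base change gives $H^4(f^{-1}(s)) \cong (R^4f_*\CC_Y)_s = \cohH^0(\derR f_*\CC_Y[4])_s$. Decomposing $\cohH^0(\derR f_*\CC_Y[4])$ by \theoremref{dt} as in the proof of \lemmaref{ls00}, the only summands that can contribute at the isolated point $s$ are $IC_X$ and the skyscraper $L_{S,0}$: the sheaves $j_*L_{U,\bullet}$ are supported on $C$, which does not meet $S$, and the remaining skyscrapers $L_{S,k}[-k]$ with $k\neq 0$ contribute nothing in cohomological degree $0$. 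Since $s$ is a point, the support condition $\dim\Supp\cohH^0(IC_X)<0$ (already used in \lemmaref{lu1}) forces $\cohH^0(IC_X)_s=0$. Hence $H^4(f^{-1}(s)) \cong (L_{S,0})_s$ as Hodge structures, and summing over $s$ yields $H^0(S,L_{S,0}) \cong H^4(D''')$. This is the analogue of the isomorphism $H^4(f^{-1}(d)) \cong (j_*L_{U,1})_d \oplus L_{d,0}$ of \lemmaref{ls00}, now with the $(j_*L_{U,1})_d$ term absent because $s$ lies off $C$.

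Next I would compute $h^{2,2}(H^4(D'''))$. Set $D = D'''$. Each $f^{-1}(s)$ is a compact simple normal crossings threefold, so by Steenbrink's purity for the exceptional divisor of an isolated singularity \cite[Corollary 1.12]{St83} both $H^4(f^{-1}(s))$ and $H^5(f^{-1}(s))$ are pure, of weights $4$ and $5$. Using the complex of Section~\ref{mhssnc}, purity of $H^4$ gives $H^4(D) = {\rm Gr}^W_4 H^4(D) = \ker\big(\delta_1\colon H^4(D(1)) \to H^4(D(2))\big)$, while purity of $H^5$ gives ${\rm Gr}^W_4 H^5(D) = \ker\delta_2/\im\delta_1 = 0$; since $D(3)$ consists of curves, $H^4(D(3)) = 0$, so $\ker\delta_2 = H^4(D(2))$ and therefore $\delta_1$ is surjective. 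As $\delta_1$ is a morphism of pure weight-$4$ Hodge structures it stays surjective on $(2,2)$-parts, and the $(2,2)$-part of $H^4(D(2))$ has dimension $h^0(D(2))$ (the top cohomology of the double-intersection surfaces), whence
$$h^{2,2}\big(H^4(D)\big) = h^{2,2}(D(1)) - h^0(D(2)).$$
Finally, each component of $D(1)$ is a smooth projective threefold, so Poincaré duality gives $h^{2,2}(D(1)) = h^{1,1}(D(1))$, and we obtain $h^{2,2}(H^0(S,L_{S,0})) = h^{1,1}(D'''(1)) - h^0(D'''(2))$.

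The main point to get right is the Hodge-theoretic bookkeeping: that the stalk identification $(L_{S,0})_s \cong H^4(f^{-1}(s))$ is one of pure Hodge structures, so that passing to the $(2,2)$-part commutes with it, and that the surjectivity of $\delta_1$ passes to $(2,2)$-parts. Both follow from the compatibility of \theoremref{dt} with Hodge structures together with Steenbrink purity, exactly as in \lemmaref{ls00}. The genuine simplification here is that, $s$ being an isolated singularity disjoint from $C$, there is no monodromy-invariant contribution to subtract, so the correction term present in \lemmaref{ls00} disappears.
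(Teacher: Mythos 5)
Your proof is correct and follows essentially the same route as the paper: identify $H^0(S,L_{S,0})$ with $\bigoplus_{s\in S} H^4(f^{-1}(s))$ via the stalk of $R^4f_*\CC_Y$ and the vanishing of $\cohH^0(IC_X)$, then run the simple normal crossings weight computation of Section~\ref{mhssnc} together with purity and Poincar\'e duality on the components of $D'''(1)$. The only difference is that you spell out the surjectivity of $\delta_1$ on $(2,2)$-parts (via purity of $H^5(f^{-1}(s))$ and $H^4(D'''(3))=0$), a step the paper leaves implicit in its citation of the snc computation.
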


\begin{proof} Let $s\in S$. In this case we have that $$H^4(f^{-1}(s)) \cong (R^4f_*\CC_Y)_s \cong L_{s,0}.$$ We know that $H^0(s, L_{s,0})$ is a pure Hodge substructure of $H^4(Y)$, and $f^{-1}(s)$ must be a simple normal crossings divisor. Therefore, \begin{equation*}\begin{split}h^{2,2}(H^0(s,L_{s,0})) = h^{2,2}(f^{-1}(s)) & = h^{2,2}(f^{-1}(s)(1)) - h^{2,2}(f^{-1}(s)(2)) \\ & = h^{1,1}(f^{-1}(s)(1)) - h^0(f^{-1}(s)(2)) \end{split} \end{equation*} (see Section \ref{mhssnc}).

\end{proof}

\subsubsection{}\label{h2h} We discuss the application of \theoremref{dt} to the map $f'$. Let $T = U\cap H = \{x_{j,l}\}$, where $x_{j,l}\in U_j\subseteq C_j$. As it is a log-resolution of singularities of a threefold with isolated singularities, we have: 
\begin{equation} \label{dt3b}\begin{split} ^{\pp}\cohH^1(\derR f'_*\CC_{H'}[3])[-1] \cong \bigoplus{H^4(f^{-1}(x_{j,l}))_{x_{j,l}}}[-1] \\
^{\pp}\cohH^0(\derR f'_*\CC_{H'}[3])[0] \cong IC_H \oplus \bigoplus{H^3(f^{-1}(x_{j,l}))_{x_{j,l}}}[0] \\
^{\pp}\cohH^{-1}(\derR f'_*\CC_{H'}[3])[1] \cong \bigoplus{H_4(f^{-1}(x_{j,l}))_{x_{j,l}}}[1].
\end{split}\end{equation}
by the computation in \cite[Remark 4.4.3]{cm07}.\\

We obtain that $$H^2(H') \cong \HH^{-1}(H, \derR f'_*\CC_{H'}[3]) \cong IH^2(H) \oplus \bigoplus{H_4(f^{-1}(x_{j,l}))}.$$
We also have that the inclusion $ H_4(f^{-1}(x_{j,l}))\subseteq H^2(H')$ is given by the composition $$H_4(f^{-1}(x_{j,l})) \into H_4(H') \cong H^2(H').$$ The space $H_4(f^{-1}(x_{j,l}))$ is generated by the classes of the irreducible components of $f^{-1}(x_{j,l})$, and its image is contained in $H^{1,1}(H')\subseteq H^2(H')$.

\subsubsection{}We discuss next the terms in expression (\ref{dt3}) which were not part of the results of Section \ref{cohdt4}.

\begin{lemma}\label{jgqu} The spaces $H^2(C, j_*g_{i*}\CC_{U'})$ are 1-dimensional.
\end{lemma}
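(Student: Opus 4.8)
The goal is to show that $H^2(C, j_*g_{i*}\CC_{U'})$ is one-dimensional for each irreducible component $D_i \subseteq D'$. The key observation is that $g_{i*}\CC_{U'}$ restricted to $U$ is the sheaf $R^0 g_{i*}\CC_{U'}$, which measures the number of connected components of the fibers of $g_i$ over points of $U$. Since $D_i$ is irreducible and $g_i' : D_i \to \tilde{C_j}$ surjects onto a smooth irreducible curve with connected generic fiber (or at least with a well-defined monodromy action on $H^0$ of the fiber), the local system $L := R^0 g_{i*}\CC_{U'}$ on $U$ should be the constant sheaf $\CC_U$ of rank one. First I would verify this: the stalk at a general $x \in U$ is $H^0(g_i^{-1}(x)) \cong (L_{U,-2}^{(i)})_x$ in the notation of \exampleref{excm}, and irreducibility of $D_i$ together with properness forces the generic fiber to be connected, so this stalk is $1$-dimensional and the monodromy acts trivially.

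\textbf{Main computation.} Given that $g_{i*}\CC_{U'}|_U \cong \CC_U$, the term in question becomes $H^2(C, j_*\CC_U)$, where $j : U \into C$ is the open inclusion and $\CC_U$ is the constant sheaf. The plan is to compute this via the defining property $IC_C(\CC_U) \cong j_*\CC_U[1]$ from \remarkref{rmkdim1}, so that $H^2(C, j_*\CC_U) \cong \HH^1(C, IC_C(\CC_U)) \cong IH^1(C)$. For a (possibly singular) projective curve $C$, intersection cohomology coincides with the cohomology of its normalization: $IH^1(C) \cong H^1(\tilde{C})$. Since $C$ has pure dimension $1$ and $\tilde{C}$ is a disjoint union of smooth projective curves, I expect $H^1(\tilde{C_i})$ to contribute $2g_i$ for each component; this is not obviously $1$-dimensional, so I must be more careful. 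The correct reading is that the statement is per-component and that the relevant computation is the top cohomology $H^2$ of the \emph{proper pushforward} structure: using $H^2_c$ or Poincaré–Lefschetz duality on the curve, $H^2(C, j_*\CC_U) \cong H^0_c(U)^\vee$ up to a twist, and $H^0_c$ of the irreducible affine-type open $U \subseteq C_j$ matched to $D_i$ is $1$-dimensional by connectedness. I would make this precise by dualizing: $\HH^2(C, j_*\CC_U) \cong \HH^{-2}(C, \DD(j_*\CC_U))^\vee$ and identify the Verdier dual using that $j_*\CC_U[1] = IC_C$ is self-dual up to twist, reducing to $IH^3(C) \cong IH^1(C)^\vee$ and then to the top-degree piece, which is one-dimensional precisely because $C_j$ (equivalently $\tilde{C_j}$) is irreducible.

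\textbf{The main obstacle.} The delicate point is matching the rank-one conclusion to the \emph{single} component $D_i$ rather than to all of $C$ at once, and correctly handling the normalization $\nu : \tilde{C} \to C$ when $C$ is reducible or singular. Concretely, one must check that $g_{i*}\CC_{U'}|_U$ is supported and constant only on the component $U_j \subseteq C_j$ over which $g_i$ lies, so that $H^2(C, j_*g_{i*}\CC_{U'})$ reduces to $H^2$ of a single irreducible projective curve with constant coefficients, which is one-dimensional by the standard fact that $H^2(\text{smooth projective irreducible curve}) \cong \CC$ (applied on the normalization, with the finite map $\nu$ inducing an isomorphism on top cohomology). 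I would therefore first reduce to the smooth irreducible case via the factorization $g_i = \nu \circ g_i'$ exactly as in the proofs of \lemmaref{lu1} and \lemmaref{lu0}, pass to $\tilde{C_j}$, and conclude $H^2(\tilde{C_j}, k_*R^0 g_{i*}'\CC_{U'}) \cong H^2(\tilde{C_j}, \CC) \cong \CC$, using that $\nu$ is finite so it preserves this top cohomology group.
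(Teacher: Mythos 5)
Your route is genuinely different from the paper's, but as written it has a gap at the very first step. You assert that irreducibility of $D_i$ together with properness forces the generic fiber of $g_i$ to be connected, so that $g_{i*}\CC_{U'}\restr{U}\cong\CC_U$. This is false: irreducibility of $D_i$ only gives that the total space $U'$ is connected, which means the monodromy acts \emph{transitively} on the set of connected components of a general fiber, not that there is a single component. (For the general principle, take $D_i=S\times C'$ with $C'\to\tilde{C_j}$ a connected \'etale double cover and $g_i'$ the composite projection: $D_i$ is irreducible but every fiber has two components.) The paper is explicitly set up to allow this: the Stein factorization $g_i'=\mu\circ\bar{g_i}$ introduced immediately after the lemma, and the requirement in condition $(*)$ that $f\restr{D_i}$ have connected fibers, would both be pointless if connectedness were automatic --- and \lemmaref{jgqu} is stated without assuming $(*)$. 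So $L:=g_{i*}\CC_{U'}\restr{U}$ is in general a rank-$m$ permutation local system with transitive monodromy, and your reduction of $H^2(C,j_*L)$ to $H^2(\tilde{C_j},\CC)$ does not go through as stated.

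The good news is that your duality strategy survives once this is corrected: a permutation local system is self-dual, so Verdier duality on the smooth projective curve gives $\HH^1(\tilde{C_j},IC_{\tilde{C_j}}(L))\cong\HH^{-1}(\tilde{C_j},IC_{\tilde{C_j}}(L))^\vee=H^0(U,L)^\vee\cong H^0(U',\CC)^\vee$, which is one-dimensional precisely because $U'$ is a dense open subset of the irreducible $D_i$ and hence connected. This is where irreducibility actually enters --- through $H^0$ of the total space, not through connectedness of the fibers. For comparison, the paper obtains the same dimension count by identifying $H^2(C,j_*g_{i*}\CC_{U'})$ with the perverse cohomology group $H^2_{-2}(D_i)$ and applying relative Hard Lefschetz (\theoremref{hlpc}\ref{hlpa}) to get $H^2_{-2}(D_i)\cong H^0_{-2}(D_i)=H^0(U',\CC)$, already computed to be one-dimensional in the proof of \lemmaref{lu1}. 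The Hard Lefschetz route buys more than the dimension: it produces a $(1,1)$-isomorphism of Hodge structures, which the paper needs later to place $H^2(C,j_*g_{i*}\CC_{U'})$ inside $H^{1,1}(D_i)$, whereas pure duality only gives an equality of dimensions.
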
 

\begin{proof}We have $$H^2_{-2}(D_i) =\HH^1(C,\ ^{\pp}\cohH^{-2}(\derR g_{i*}\CC_{D_i}[3])) \cong H^2(C, j_*g_{i*}\CC_{U'}),$$ and \theoremref{hlpc}\ref{hlpa} implies $$ H^0_{-2}(D_i) \cong H^2_{-2}(D_i).$$ The result follows, as $\dim{H^0_{-2}(D_i)} = 1$ (see proof of \lemmaref{lu1}).
\end{proof}

Consider the Stein factorization 
\[
\begin{tikzcd}[column sep=1.5em]
D_i \arrow{rr}{g_i'} \arrow{dr}{\bar{g_i}} && \tilde{C} \\
& C' \arrow{ur}{\mu} &
\end{tikzcd}
\]
where $\mu$ is a finite map and $\bar{g_i}$ has connected fibers. The map $\bar{g_i}$ is smooth in the preimage of an open set of $C'$, and $\mu$ has no branch points in an open set of $C'$. We can assume $U$ is contained in the image of these open sets. Let $x\in U$ and $\mu^{-1}(x) = \{y_1, \ldots, y_{l_i}\}$. 

\begin{lemma}\label{r2gi}With the notation above $$H^0(U, R^2g_{i*}\CC_{U'}) \cong H^0(U, R^2g_{i*}'\CC_{U'}) \cong \im\{H^2(D_i) \to H^2(\bar{g_i}^{-1}(y_j))\}$$ for any $j$.

\end{lemma}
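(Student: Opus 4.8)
The plan is to compute $H^0(U,R^2g_{i*}\CC_{U'})$ by transporting the computation down the Stein factorization $D_i \xrightarrow{\bar{g_i}} C' \xrightarrow{\mu} \tilde{C}$ and then invoking the global invariant cycle theorem (\theoremref{gict}). First I would dispose of the easy isomorphism $H^0(U,R^2g_{i*}\CC_{U'}) \cong H^0(U,R^2g_{i*}'\CC_{U'})$. Since $g_i = \nu\circ g_i'$ with $\nu$ finite, the Leray spectral sequence for $\nu$ collapses (as $R^p\nu_* = 0$ for $p>0$), giving $R^2g_{i*}\CC_{U'} = \nu_* R^2g_{i*}'\CC_{U'}$; and because $\nu$ is an isomorphism over $U$ (the normalization is an isomorphism on the smooth locus of $C$, in which $U$ lies), taking sections over $U$ identifies the two sides.

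For the main isomorphism I would exploit that $\mu$ is finite in exactly the same way. Writing $g_i' = \mu\circ\bar{g_i}$, finiteness of $\mu$ forces the Leray spectral sequence to degenerate to $R^2g_{i*}'\CC_{U'} = \mu_* R^2\bar{g_i}{}_*\CC$. Setting $V := \mu^{-1}(U)$, the finite (hence proper) base change $(\mu_*\mathcal{F})\restr{U} \cong \mu_*(\mathcal{F}\restr{V})$ together with the definition of direct image yields $H^0(U,R^2g_{i*}'\CC_{U'}) \cong H^0(V,R^2\bar{g_i}{}_*\CC)$. This reduces the problem to a single smooth projective family over $V$ with \emph{connected} fibers, which is precisely the situation in which \theoremref{gict} is sharpest.

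Now I would apply \theoremref{gict} to $\bar{g_i}$. Over $V$ the map $\bar{g_i}$ is smooth projective (this is the point of arranging, before the lemma, that $U$ avoid the branch locus of $\mu$ and the critical values of $\bar{g_i}$), and $D_i$ is a smooth projective compactification of $\bar{g_i}^{-1}(V)$. Hence for any $y_j\in V$, $$H^0(V,R^2\bar{g_i}{}_*\CC) = H^2(\bar{g_i}^{-1}(y_j))^{\pi_1(V,y_j)} = \im\{H^2(D_i)\to H^2(\bar{g_i}^{-1}(y_j))\},$$ which is the asserted description. Note that the statement is made ``for any $j$'': this is legitimate because $D_i$ is irreducible, so its Stein base $C'$ is irreducible and the open subset $V=\mu^{-1}(U)$ is connected, whence the monodromy of $V\to U$ permutes the points $y_1,\dots,y_{l_i}$ transitively and the images in the individual fibers agree.

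The step I expect to require the most care is not any single diagram but the bookkeeping needed to invoke \theoremref{gict} legitimately: I must ensure that after the (finitely many) shrinkings of $U$ recorded in the setup, $\bar{g_i}$ is genuinely smooth over all of $V$ and that $\bar{g_i}^{-1}(V)$ is Zariski-open and dense in the smooth projective $D_i$, so that $D_i$ serves as the compactification $\bar{Z}$ in \theoremref{gict}. The finite-pushforward identities $R^2g_{i*}'\CC_{U'} = \mu_*R^2\bar{g_i}{}_*\CC$ and the base change over $U$ are formal once $\mu$ is known to be finite, and the connectedness of $V$ — the only place where irreducibility of $D_i$ is used — is what makes the single-fiber formula independent of $j$.
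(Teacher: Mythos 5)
Your proposal is correct and follows essentially the same route as the paper: the first isomorphism from finiteness of $\nu$, the reduction $H^0(U,R^2g'_{i*}\CC_{U'})\cong H^0(\mu^{-1}(U),R^2\bar{g_i}{}_*\CC)$ via finiteness of $\mu$ and Leray, and then the global invariant cycle theorem applied to $\bar{g_i}$ with $D_i$ as the smooth compactification. Your extra remarks on the connectedness of $\mu^{-1}(U)$ and the independence of $j$ only make explicit what the paper leaves implicit.
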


\begin{proof}
As the surfaces $\bar{g_i}^{-1}(y_j)$ are fibers of the smooth map $\bar{g_i}\restr{{U'}}$, they are all diffeomorphic. The map $\mu$ is finite, so $\mu_*R^2\bar{g_{i*}}\CC_{U'} \cong R^2g'_{i*}\CC_{U'}$. The Leray spectral sequence implies $$ H^0(U, R^2g'_{i*}\CC_{U'}) \cong H^0(\mu^{-1}(U), R^2\bar{g_{i*}}\CC_{U'}).$$
The second isomorphism is a consequence of applying \theoremref{gict} to $\bar{g_i}$, and the isomorphism above. The first isomorphism is a consequence of $\nu$ being a finite map. 
\end{proof}


\subsection{Hodge structures}\label{hodgestructures} In this section we describe the subspaces $H^{2,2}(Y)$ and $H^{1,1}(D_i)$.   


\subsubsection{} We describe first the $H^{2,2}$ pieces of the summands in (\ref{dt4}).

\begin{lemma} The following holds: $$H^0(U, L_{U,1})\subseteq H^{2,2}(Y) \subseteq H^4(Y)$$ and $$ H^2(C,j_*L_{U,-1}) \subseteq H^{2,2}(Y) \subseteq H^4(Y).$$
\end{lemma}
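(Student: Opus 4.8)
The plan is to identify both summands with perverse pieces of $H^4(Y)$, to establish the Hodge type $(2,2)$ for the one arising from top fibre cohomology, and then to transport it along the $(1,1)$-chain recorded in Remark~\ref{rmk1}. First I would note, exactly as in the proofs of \lemmaref{lu1} and \lemmaref{lu-1}, that the skyscraper summands of ${}^{\pp}\cohH^{\pm 1}(\derR f_*\CC_Y[4])$ vanish in the relevant hypercohomological degree, so that
$$H^0(U, L_{U,1}) \cong H^4_1(Y) \qquad\text{and}\qquad H^2(C, j_*L_{U,-1}) \cong H^4_{-1}(Y).$$
Since the perverse decomposition of $H^4(Y)$ is a decomposition by Hodge substructures \cite[Remark 2.1.6]{cm05}, these two summands are pure sub-Hodge-structures of $H^4(Y)$ of weight $4$, and it suffices to determine their Hodge types.

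The main step is to show directly that $H^4_1(Y)\subseteq H^{2,2}(Y)$. By (\ref{lu1iso}) we have $L_{U,1}\cong R^4g_*\CC_{D(1)}\restr{U}$, so the stalk of $L_{U,1}$ at a point $x\in U$ is $\bigoplus_i H^4(g_i^{-1}(x))$, spanned by the fundamental classes of the two-dimensional fibre components $g_i^{-1}(x)$; these are algebraic, of weight $4$ and type $(2,2)$. Invoking \theoremref{gict} for a smooth compactification of $f^{-1}(U)$ realises the invariant sections $H^0(U,L_{U,1})$ as restrictions of global classes, so that $H^4_1(Y)$ is spanned by classes of type $(2,2)$; being a pure weight-$4$ Hodge structure, it is therefore of pure type $(2,2)$. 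This is the step I expect to demand the most care, since one must verify that the decomposition-theorem identification introduces no Tate twist incompatible with the bidegree --- a point forced here by the weight-$4$ purity of the summand together with the Hodge--Tate nature of top fibre cohomology.

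It then remains to transport this conclusion along the chain of Remark~\ref{rmk1}. By \theoremref{hlpc}\ref{hla} (with $j=2$ and $k=1$) the cup product $\eta\colon H^2_{-1}(Y)\to H^4_1(Y)$ is an isomorphism of Hodge type $(1,1)$; as $H^4_1(Y)\subseteq H^{2,2}(Y)$ and $\eta$ is injective, every class of $H^2_{-1}(Y)$ must be of type $(1,1)$, i.e. $H^2_{-1}(Y)\subseteq H^{1,1}(Y)$. Applying now \theoremref{hlpc}\ref{hlpa} (with $b=-1$ and $k=1$), the cup product $L=f^*A\colon H^2_{-1}(Y)\to H^4_{-1}(Y)$ is again of Hodge type $(1,1)$, so its image $H^4_{-1}(Y)$ lies in $H^{2,2}(Y)$. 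Combined with the identifications of the first paragraph, this yields both asserted inclusions $H^0(U,L_{U,1})\subseteq H^{2,2}(Y)$ and $H^2(C,j_*L_{U,-1})\subseteq H^{2,2}(Y)$.
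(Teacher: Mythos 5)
Your overall architecture --- identify the two summands with $H^4_{\pm 1}(Y)$, establish type $(2,2)$ for one of them, and transport along the $(1,1)$-isomorphisms of \theoremref{hlpc} --- is sound, and your first and last paragraphs are correct: since the perverse summands are Hodge substructures, the off-type components of a class in $H^2_{-1}(Y)$ are killed by the injective Lefschetz maps, so the transport works in either direction. The problem is the middle step. \theoremref{gict} applies to a \emph{smooth} projective family, and $f\restr{f^{-1}(U)}:f^{-1}(U)\to U$ is not smooth in general (its fibres are normal crossing surfaces with several components). More importantly, even where GICT applies, it identifies $H^0(U,L_{U,1})$ with the \emph{image} of $H^4(Y)\to H^4(f^{-1}(x))$, i.e.\ it realises the invariants as a quotient of $H^4(Y)$ sitting inside the fibre cohomology; by itself this says nothing about the Hodge type of the subspace $H^4_1(Y)\subseteq H^4(Y)$ (note that \emph{every} class of $H^4(f^{-1}(x))$ is of type $(2,2)$, so ``being a restriction of a global class'' carries no information here). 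What your argument actually needs is that the composite $H^4_1(Y)\into H^4(Y)\to H^4(f^{-1}(x))$ is an \emph{injective} morphism of Hodge structures, so that the type $(2,2)$ of the target can be pulled back to the pure weight-$4$ source. That injectivity is true --- the natural transformation $\HH^0(X,-)\to\HH^0(x,i_x^*-)$ respects the direct sum decomposition of $\derR f_*\CC_Y[4]$, and on the summand ${}^{\pp}\cohH^1$ it is the evaluation $H^0(U,L_{U,1})\to (L_{U,1})_x$, injective on each connected component of $U$ --- but it is not what GICT gives you, and as written the step does not go through.

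This is also where your route genuinely diverges from the paper's. The paper restricts to a \emph{general hyperplane} $H$ rather than to a fibre: \lemmaref{lemmahyp} and \propositionref{prophyp} are quoted precisely to guarantee that $H^2_{-1}(Y)\to H^2_{-1}(H')$ is an injective map of Hodge structures compatible with the perverse decompositions, and $H^2_{-1}(H')\cong\bigoplus H_4(f^{-1}(x_{j,l}))$ is visibly spanned by classes of algebraic cycles inside $H^{1,1}(H')$ (Section \ref{h2h}). The conclusion $H^2_{-1}(Y)\subseteq H^{1,1}(Y)$ is then transported up to $H^4_{\pm1}(Y)$ by Remark \ref{rmk1}, whereas you argue at the level of $H^4_1(Y)$ first and transport down. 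Your fibre-restriction variant can be repaired along the lines indicated above, but the hyperplane route has the advantage that all the needed injectivity and Hodge compatibilities are already packaged in the cited results of de Cataldo--Migliorini rather than having to be verified by hand.
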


\begin{proof} Recall that $$H^2_{-1}(Y) \cong H^0(U,L_{U,-1})$$ and $$H^2_{-1}(H') \cong \bigoplus_{i=1}^k{H_4(f^{-1}(x_i))}$$ (see \definitionref{perverse cohomology}). 
The map $H^2(Y)\stackrel{v^*}{\to} H^2(H')$ respects the perverse filtration by \lemmaref{lemmahyp}. \propositionref{prophyp} implies that $$\HH^{-1}(X,\ ^{\pp}\cohH^{-1}(\derR f_*\CC_Y[4])) \to \HH^{-1}(H, u^{*\pp}\cohH^{-1}(\derR f_*\CC_{Y}[4]))$$ is an injection. By \lemmaref{lemmahyp} $$u^{*\pp}\cohH^{-1}(\derR f_*\CC_{Y}[4]) \cong\  ^{\pp}\cohH^{-1}(\derR f'_*\CC_{H'}[3])[1],$$ and so $$\HH^{-1}(H, u^{*\pp}\cohH^{-1}(\derR f_*\CC_{Y}[4])) \cong \HH^0(H,\ ^{\pp}\cohH^{-1}(\derR f'_*\CC_{H'}[3])) = H^2_{-1}(H').$$
The conclusion is that $$H^2_{-1}(Y) \to H^2_{-1}(H')$$ is an inclusion. As the restriction map is one of Hodge structures, and $$H^2_{-1}(H')\subseteq H^{1,1}(H')$$ as discussed in Section \ref{h2h}, we obtain: $$H^2_{-1}(Y)\subseteq H^{1,1}(Y).$$
As explained in Remark \ref{rmk1},  $$H^2_{-1}(Y) \cong H^4_1(Y)$$ via a $(1,1)$-map, hence $$H^4_1(Y)\subseteq H^{2,2}(Y).$$ Analogously, we get $$H^4_{-1}(Y) \subseteq H^{2,2}(Y).$$

\end{proof}

\subsubsection{} Finally we describe the $H^{1,1}$ pieces of the summands of (\ref{dt3}). 

\begin{lemma} The space $H^2(C, j_*g_{i*}\CC_{U'})$ is contained in $H^{1,1}(D_i).$
\end{lemma}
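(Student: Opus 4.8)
The claim is that $H^2(C, j_*g_{i*}\CC_{U'})$ sits inside $H^{1,1}(D_i)$. The plan is to exploit the Hard Lefschetz isomorphism for perverse cohomology groups, exactly as in the proof of \lemmaref{jgqu}, to transport the question to a summand where the Hodge type is visibly $(1,1)$.

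First I would recall the identification established in \lemmaref{jgqu}, namely that
$$H^2_{-2}(D_i) = \HH^1(C,\ ^{\pp}\cohH^{-2}(\derR g_{i*}\CC_{D_i}[3])) \cong H^2(C, j_*g_{i*}\CC_{U'}),$$
so it suffices to show $H^2_{-2}(D_i) \subseteq H^{1,1}(D_i)$. By \theoremref{hlpc}\ref{hlpa} there is an isomorphism $L^2: H^0_{-2}(D_i) \cong H^2_{-2}(D_i)$ given by cup product with the pullback $L = g_i^*A$ of an ample class $A$ on $C$ (via the map $g_i$). Since $A$ is a divisor class on the curve $C$, it has Hodge type $(1,1)$, and hence $L \in H^{1,1}(D_i)$. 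The decomposition of $H^2(D_i)$ into perverse pieces is one of Hodge substructures, so cup product with $L$ raises Hodge type by $(1,1)$.

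The remaining step is to pin down the Hodge type of the source $H^0_{-2}(D_i)$. As computed in the proof of \lemmaref{lu1}, this space is one-dimensional and isomorphic to $H^0(C, j_*g_{i*}\CC_{U'}) \cong H^0(U', \CC_{U'})$, i.e. it is spanned by the fundamental class coming from the constant sheaf on a connected open of $D_i$; as a weight-zero Hodge structure in $H^0$ it is of type $(0,0)$. Applying the $(1,1)$-map $L^2$ twice therefore lands $H^2_{-2}(D_i)$ inside $H^{1,1}(D_i)$, which is the assertion.

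The main point to be careful about is the compatibility of the Hard Lefschetz isomorphism with the Hodge decomposition: I must invoke that the perverse decomposition $H^k(D_i) = \bigoplus H^k_b(D_i)$ is by Hodge substructures (as recorded after \definitionref{perverse cohomology}, following \cite[Remark 2.1.6]{cm05}) and that the cup product maps in \theoremref{hlpc} are morphisms of Hodge structures of the indicated bidegree. Granting these standard facts, the argument is a direct transcription of the bookkeeping already carried out for the curve $C$ in \lemmaref{jgqu} and \lemmaref{lu1}, with no genuinely new obstacle.
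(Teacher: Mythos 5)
Your argument is essentially the paper's own proof: both rely on the Hard Lefschetz isomorphism $H^0_{-2}(D_i)\cong H^2_{-2}(D_i)$ already established in \lemmaref{jgqu}, note that the source lies in $H^0(D_i)=H^{0,0}(D_i)$, and conclude because that isomorphism is a morphism of Hodge structures of bidegree $(1,1)$. The only blemish is notational: the relevant Lefschetz map is $L^1=L$ applied once (the cohomological degree rises by $2k$ with $k=1$), not ``$L^2$ applied twice,'' but this does not affect the substance.
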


\begin{proof}In the proof of \lemmaref{jgqu} it was shown that $$H^0_{-2}(D_i) \cong H^2_{-2}(D_i)$$ via a (1,1) map. As $H^0(D_i)= H^{0,0}(D_i)$, the result follows.
\end{proof}

\subsection{Proof}\label{proof} In this section we prove \theoremref{main}. 

\subsubsection{} The theorem is proved assuming condition $(*)$. Recall the definition:\\ \begin{align*} (*) &&
\parbox[c]{13cm}{  If $D\subseteq Y$ is the exceptional set of $f$, with irreducible components $D= \bigcup D_i$, $f\restr{D_i}$ has connected fibers, and for any irreducible component $B_{ij}\subseteq D_i\cap D_j$, $f\restr{B_{ij}}$ has connected fibers.}
\end{align*}

\begin{proof}[Proof of \theoremref{main}] We show first that $$h^{2,2}_{st}(X) \geq h^{1,1}_{st}(H)$$ for a general hyperplane $H \subseteq X$. This inequality is equivalent to: $$h^{2,2}(Y) - h^{1,1}(D(1)) + h^0(D(2)) + \sum_{a_i = 1} h^0(D_i) - h^{1,1}(H') + h^0(E(1)) \geq 0$$ where $E = D\cap H'$. \\

Consider the first part of the sum: $$h^{2,2}(Y) - h^{1,1}(D(1)).$$ Combining the Lemmas in Sections \ref{cohomologies} and \ref{hodgestructures} we get 
\begin{equation*} \begin{split} 
h^{2,2}(Y) - h^{1,1}(D(1))  = \big[ 2h^0(D'(1)) +\dim{H^{2,2}(IH^4(X))} - h^0(D''(2)) - h^0(D'''(2)) \\ - \sum_{d\in S''}\sum_{\substack{ x_j \text{ near } \\ d_j\in \nu^{-1}(d)}}{(\dim{H^{2,2}(H^4(f^{-1}(x_j))^{\ZZ})})}\big] - \big[h^0(D'(1)) + \sum h^{1,1}(H^0(U, R^2g_{i*}\CC_{U'}))\big]
\end{split} \end{equation*} after canceling the terms of \lemmaref{lu0} and \lemmaref{ls0}.\\

Note that $$ h^0(f^{-1}(x)(1)) \geq \dim{H^{2,2}(H^4(f^{-1}(x))^{\ZZ})}.$$ Using this, together with the discussion of Section \ref{h2h}, we obtain the following inequality:
\begin{equation*} \begin{split} h^{2,2}_{st}(X) - h^{1,1}_{st}(H)  & \geq h^0(D'(1)) +\dim{H^{2,2}(IH^4(X))} - \dim{H^{1,1}(IH^2(H))} \\  & - h^0(D''(2)) - h^0(D'''(2)) + h^0(D(2))  + \sum_{a_i=1}{h^0(D_i)}  \\ &- \sum_{d\in S''}\sum_{\substack{ x_j \text{ near } \\ d_j\in \nu^{-1}(d)}}{h^0(f^{-1}(x_j)(1))}  - \sum h^{1,1}(H^0(U, R^2g_{i*}\CC_{U'})).
\end{split} \end{equation*}

Applying \theoremref{wlt} we get $$IH^2(X) \cong IH^2(H),$$ and \theoremref{hlt} implies there is an injection $$IH^2(X) \into IH^4(X).$$ These maps respect the Hodge structures of the intersection cohomologies, and therefore $$\dim{H^{2,2}(IH^4(X))} - \dim{H^{1,1}(IH^2(H))}\geq 0.$$ 

Next, by the definitions of $D= D'\cup D'' \cup D'''$ we have $$h^0(D(2)) = h^0(D'(2)) + h^0(D''(2)) + \sum_{\substack{D_i\subseteq D'\\ D_j\subseteq D''}}{h^0(D_i\cap D_j)} + h^0(D'''(2)).$$
As the components of $D'''$ do not intersect $D'$ nor $D''$, because they are the fibers of the isolated singularities, these terms cancel out in the right hand side of the inequality above. On the other hand, as we are assuming condition $(*)$, the general fiber of $f\restr{D_i}$ is connected, hence for each component of $D'$ corresponds one component of $f^{-1}(x)$. Also, every $D_i\subseteq D'$ must intersect a component of $f^{-1}(d)$, for all $d\in S''$. This means that $$ \sum_{\substack{D_i\subseteq D'\\ D_j\subseteq D''}}{h^0(D_i\cap D_j)} \geq \sum_{d\in S''}\sum_{\substack{ x_j \text{ near } \\ d_j\in \nu^{-1}(d)}}{h^0(f^{-1}(x_j)(1))}.$$ Note that if $\nu^{-1}(d) \cap \tilde{C_j}$ has $k$ points, then for every $D_i$ such that $g_i(D_i)= C_j$, $D_i \cap f^{-1}(d)$ has at least $k$ components. Indeed, $g_i^{-1}(d) = D_i \cap f^{-1}(d)$, and $g_i = \nu \circ g_i'$.\\ 

With the discussion above we can simplify the inequality: \begin{equation*} \begin{split} 
h^{2,2}_{st}(X) - h^{1,1}_{st}(H) & \geq h^0(D'(1)) + h^0(D'(2)) \\ &- \sum h^{1,1}(H^0(U, R^2g_{i*}\CC_{U'})) + \sum_{a_i=1}{h^0(D_i)}.
\end{split} \end{equation*}
We also have $$ h^{1,1}(g_i^{-1}(x)) \geq h^{1,1}(H^0(U, R^2g_{i*}\CC_{U'})), $$ that comes from assuming the local system has trivial monodromy, and note that $$\sum{h^{1,1}(g_i^{-1}(x))} = h^{1,1}(f^{-1}(x)(1)).$$

We obtain \begin{equation*} \begin{split} 
h^{2,2}_{st}(X) - h^{1,1}_{st}(H)  \geq h^0(D'(1)) + h^0(D'(2)) -  \sum h^{1,1}(f^{-1}(x_k)(1))+ \sum_{a_i=1}{h^0(D_i)}\end{split} \end{equation*} where for every connected component of $U$, say $U_k$, we pick $x_k\in U_k$. \\

Consider the right hand side of the inequality. Recall $T = U\cap H = \{x_{j,l}\}$ with $x_{j,l}\in U_j \subseteq C_j$. We want to compare it to a piece of the sum  \begin{equation*}\begin{split}h^{2,2}_{st}(H) - h^{1,1}_{st}(H) = -h^{1,1}(E(1)) + h^0(E(2)) + \sum_{a_{kli}=1}{h^0(E_{kli})} + h^0(E(1)). \end{split}\end{equation*}
Recall that $f^{-1}(x_{k,l}) = E_{kl} = E_{kl1} \cup \cdots \cup E_{klm}$. If we fix $k$ and vary $l$, the fibers do not intersect, they all have the same number of components, double intersections, and components with discrepancy one. This means that the sum similar to the one right hand side above, but with $E_{kl}$ instead of $E$, is the same for all $l$. \\

We fix $l$ from now on. As to each $D_i\subseteq D'$ such that $g_i(D_i) = C_k$ corresponds one component of $E_{kl}$, namely $E_{kli}$, we have $$h^0(D'(1)) = \sum_k h^0(E_{kl}(1)).$$ Also, the corresponding discrepancies are the same, hence $$ \sum_{a_{i}=1}{h^0(D_{i})} \geq \sum_{a_{kli}=1}{h^0(E_{kli})}, $$ as some components of $D''$ might have discrepancy 1. It is also clear that $$h^{1,1}(f^{-1}(x_k)(1))=  h^{1,1}(f^{-1}(x_{k,l})(1)) = h^{1,1}(E_{kl}(1)).$$
Finally, as we are also assuming the general fibers of the maps $g_i\restr{D_i\cap D_j}$ are connected in each of the components of $D_i\cap D_j$ in condition $(*)$, we have $$h^0(D'(2)) = \sum_kh^0(E_{kl}(2)).$$ 

We obtain:
\begin{equation*}\begin{split} h^{2,2}_{st}(H) - h^{1,1}_{st}(H)  & = h^0(E(1))  + h^0(E(2)) -h^{1,1}(E(1)) + \sum_{a_{kli}=1}{h^0(E_{kli})} \\ & = \sum_{k,l}{(h^0(E_{kl}(1)) + h^0(E_{kl}(2)) - h^{1,1}(E_{kl}(1)) +\sum_{a_{kli}=1}{h^0(E_{kli})} )}
\end{split}\end{equation*} 

The conclusion of the discussion above is that 
\begin{equation*}\begin{split}h^0(D'(1)) + h^0(D'(2)) -  \sum h^{1,1}(f^{-1}(x_k)(1))+ \sum_{a_i=1}{h^0(D_i)} \\ \geq \sum_k{(h^0(E_{kl}(1)) + h^0(E_{kl}(2)) - h^{1,1}(E_{kl}(1)) +\sum_{a_{kli}=1}{h^0(E_{kli})})}. \end{split} \end{equation*} 
The right hand side is a piece of $h^{2,2}_{st}(H) - h^{1,1}_{st}(H)$, which consists of picking one fiber per curve, that is, fixing a value of $l$. In \propositionref{h22st} it was shown that this number is nonnegative, and therefore $$h^{2,2}_{st}(X) - h^{1,1}_{st}(H) \geq 0.$$ Applying \corollaryref{cor3folds} to $H$ we obtain $$h^{2,2}_{st}(X) \geq 0.$$

\end{proof}

\subsection{Applications}\label{examples}In this section we discuss classes of fourfolds $X$ for which $h^{2,2}_{st}(X)$ is nonnegative.

\subsubsection{}\label{easyexamples} We discuss first the example of a product. 
\begin{example}Let $X_0$ be a threefold with at most terminal Gorenstein singularities and $C$ a smooth curve. Let $Y_0 \to X_0$ be a log-resolution with exceptional set $E\subseteq Y_0$ and let $X=X_0 \times C$. In this case $Y_0\times C \to X$ is a log-resolution with exceptional set $E\times C$. We have the following: $$h^{2,2}_{st}(X) = h^{2,2}_{st}(X_0) + h^{2,1}_{st}(X_0)h^{0,1}(C) + h^{1,2}_{st}(X_0)h^{1,0}(C) + h^{1,1}_{st}(X_0).$$ If $E_{st}(X_0)$ is a polynomial, then $h^{2,2}_{st}(X_0) = h^{1,1}_{st}(X_0) \geq 0$ by \theoremref{hp1}. This implies that $h^{2,2}_{st}(X) \geq 0$ by applying \theoremref{hp1} to each of the stringy Hodge numbers in the expression. But even if it is not a polynomial we can get the same conclusion by applying \corollaryref{cor3folds}, since it implies $$h^{2,2}_{st}(X_0) \geq 0,$$ we obtain as above $$h^{2,2}_{st}(X) \geq 0.$$\end{example}
 
\subsubsection{}\label{hardexamples} We describe next a class of terminal fourfolds $X$ that admit a log-resolution which satisfies condition $(*)$.\\

For the purpose of this paper we introduce the following terminology. 

\begin{definition}\label{equisingular} Let $X$ be a fourfold with Gorenstein terminal singularities. Let $C\subseteq X$ be an irreducible curve in the singular locus of $X$. We say that $X$ is \emph{equisingular} along $C$ if it is locally a hypersurface in $\CC^5$, say with coordinates $x,y,z,w,t$, and $C$ is locally a complete intersection given by $(x=y=z=w=0)$, such that, outside a finite set of points, the singularities in the hyperplanes $t=a\in \CC$ are analytically isomorphic. We say that $X$ is \emph{strongly equisingular} along $C$ if it is equisingular and the condition is satisfied on every point of $C$.\end{definition} 

\begin{definition}\label{controlledresolution} We say that a singularity of a terminal threefold admits a \emph{controlled resolution} if it admits a log-resolution of singularities such that: \begin{renumerate} \item it consists of a sequence of blow-ups of points, such that at each step the exceptional divisor has only isolated cDV singularities\footnote{A better approach would perhaps be, instead of taking a sequence of regular blow ups, to take explicit resolutions in the sense of Chen, which consist of a sequence of weighted blow ups \cite{chen16}.} which do not have the same cDV type or do not have the same Milnor number, \item\label{it2} the double intersections of the irreducible components of the exceptional divisor are connected.\end{renumerate} We say that it admits a \emph{strong controlled resolution} if admits a controlled one, such that after each blow-up the exceptional divisor has a unique singular point.  
\end{definition}

\begin{proposition}\label{propequi} Let $X$ be a fourfold with Gorenstein terminal singularities and let $C$ be the one-dimensional singular locus. Suppose that one of the following is true: \begin{renumerate}\item\label{item1} For every irreducible component of $C$, $X$ is equisingular along it and the singularity on the hyperplane sections admits a strong controlled resolution. \item\label{item2} Every connected component of $C$ is irreducible, $X$ is strongly equisingular along it, and the singularity on the hyperplane sections admits a controlled resolution.\end{renumerate} Then $X$ admits a log-resolution which satisfies condition $(*)$.  \end{proposition}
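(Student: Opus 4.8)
The plan is to build the required log-resolution by spreading the (strong) controlled resolution of the threefold hyperplane-section singularity out in a family along $C$, and then resolving the finitely many remaining isolated singularities separately. First I would reduce to a purely local analysis along each component of $C$. The isolated singularities in $S$, together with (in case \ref{item1}) the finitely many points of $C$ where equisingularity fails, are isolated, so any log-resolution resolves them; the exceptional divisors created over them are contracted to points and therefore contribute only components of type $D''$ or $D'''$ in the notation of \parref{notation2}, whose fibres are automatically connected and so do not interfere with $(*)$. Thus the whole content of $(*)$ concerns the components $D_i\subseteq D'$ dominating the curves $C_j$, and it suffices to produce, along the good locus of $C$, a resolution for which every such $D_i$, and every component $B_{ij}$ of a double intersection that dominates a curve, has connected fibres over $C$.

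Next I would use equisingularity to trivialize $X$ analytically along $C$: over the good locus the hyperplane sections are analytically isomorphic, so near a general point $X$ is an analytic product $H_0\times\Delta$, with $H_0$ the terminal threefold singularity and $\Delta$ the $t$-disc. A controlled resolution of $H_0$ is a sequence of point blow-ups; performed in this family, blowing up a point of the fibre becomes blowing up an irreducible curve $\Gamma\subseteq X$ mapping to $C$ with connected fibres (the first centre being $C$ itself). The exceptional divisor of such a blow-up is the projectivized normal cone over $\Gamma$, hence fibres over $\Gamma$ with connected fibres, and since $\Gamma\to C$ has connected fibres the composite to $C$ does too; this is preserved by the later blow-ups, which are isomorphisms over the generic point of each $D_i$. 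I would then iterate: at each stage the defining property of a controlled resolution guarantees that the exceptional divisor of the fibre has isolated cDV singularities, whose spread-out loci are the centres of the next blow-up, and (by \ref{it2}) that the double intersections of its components are connected, which is exactly the statement that the $B_{ij}$ have connected fibres. Combining with the resolution of the isolated singularities produces a global log-resolution.

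The crux, and the reason for the two separate hypotheses, is to guarantee that the centres of the successive blow-ups are irreducible curves mapping to $C$ with connected (indeed single-point) fibres, rather than multisections: an orbit of singular points of size $>1$ would sweep out a curve with disconnected fibres over $C$ and break $(*)$. This is a monodromy statement, as $t$ runs around a loop in the good locus. In case \ref{item1} the resolution is \emph{strong} controlled, so each fibre has a \emph{unique} singular point; there is nothing to permute, the singular locus is a single irreducible curve that is a section over the good locus, and the argument survives even though $C$ may be reducible and equisingularity may fail at finitely many points (handled in the first paragraph). In case \ref{item2} the fibre may have several singular points, but the controlled-resolution hypothesis forces them to have pairwise distinct cDV type or Milnor number; as both invariants are preserved by the analytic monodromy, no permutation is possible, the monodromy acts trivially, and each singular point sweeps out a section. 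Here one needs \emph{strong} equisingularity, so that there are no bad points on $C$ where the singular points could collide, together with the irreducibility of each connected component of $C$, so that $\pi_1$ of the good locus is the relevant group. In both cases the triviality of the monodromy yields centres that are disjoint sections, hence exceptional divisors and double intersections with connected fibres, which is precisely condition $(*)$. I expect this monodromy step, and the care needed to extend the family construction across the finitely many bad points while keeping the fibres of each $D_i$ connected, to be the main obstacle.
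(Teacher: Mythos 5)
Your proposal is correct and follows essentially the same route as the paper: both spread the (strong) controlled resolution of the threefold slice out along $C$ by blowing up the family of corresponding centres (starting with $C$ itself), reduce the bad points and isolated singularities to a separate finite-set resolution, and use condition (ii) of a controlled resolution for the connectedness of the double intersections. The only cosmetic difference is that you phrase the key well-definedness of the successive centres as triviality of the monodromy permutation of the singular points (forced by the unique singular point in case (i), or by the pairwise distinct cDV types/Milnor numbers in case (ii)), whereas the paper phrases the same fact as the agreement of the blow-up ideals on overlaps of trivializing charts.
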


\begin{proof}We can restrict to one of the irreducible components. For \ref{item1}, we can consider the intersection points with other components as points on which $X$ is not equisingular along $C$, as this does not affect the fibers. From now on we assume $C$ is irreducible.\\

Let $X_0$ be an analytic threefold with an isolated singularity of the type determined by $C$, and $$Y_0 = X_0^n \to X_0^{n-1} \to \cdots \to X_0^1 \to X_0$$ be the sequence of blowups of the (strong) controlled resolution. We can assume $X_0^1 \to X_0$ is the blowup of the point. Let $$X^1\to X$$ be the blow up of $X$ along $C$. Let $U\subseteq X$ be one open set which makes $X$ satisfy the condition of equisingularity (see \definitionref{equisingular}). In this open set, we are taking the blow up of the ideal $(x,y,z,w)$ and we denote the blow up by $U^1$. Let $U_k$ be the subvarieties in $U$ defined by $t=k\in \CC$. In each of the $U_k$ we are taking the blow up along the only singular point, denoted by $U_k^1$. As $U_k\cong X_0$, the first blowup is isomorphic to $X_0^1$. The next step $X^2_0\to X_0^1$ is a blow up along a subvariety of $X_0^1$, which determines a subvariety on $U_k^1$. Taking the same ideal on $U^1$, gives a subvariety such that when restricted to $U^1_k$ was the original one. We define $$U^2\to U^1$$ to be the blow up along that subvariety.\\

The blow up is well defined. Indeed, let $V\subseteq X$ be a different open set such that $U\cap V \neq \emptyset$, and $V^1\to V$ the first blowup. For \ref{item1}, as the resolution of $X_0$ is strongly controlled, the subvariety determined by $V^1$ and $U^1$ on $V^1\cap U^1$ agree, as is just the new singular locus on each hyperplane section. For \ref{item2}, each of the isolated singularities in the $U^1_k$ must coincide with the corresponding singularities in the subvarieties of $V^1$. Indeed, on an analytic open set the cDV type of the singularity is the same (see \cite{namikawa01}), and outside of finite points, the Milnor number is constant in an open set around the point (see \cite[Proposition 2.57]{gls}). The strongly equisingular condition ensures that the subvariety we blow up in the next step has different connected components corresponding to different singularities we get in each fiber, and in particular we get a global subvariety that is being blown up.\\

By using the same argument as above on every step of the (strong) controlled resolution, we get a birational morphism $$Y' = X^n \to X^{n-1} \to \cdots \to X^1\to X$$ such that $Y'$ only has singularities whose image in $X$ is a finite set of points. Let $Y\to Y'$ be a log-resolution of singularities which is an isomorphism outside of the singular locus of $Y'$. To check that the conditions of $(*)$ are satisfied on $Y$, is enough to check them on $Y'$. The exceptional divisors in $Y'$ come from the irreducible components being blown up during the process. Therefore, to each of these corresponds one of the divisors in $Y_0$. The double intersections in $Y'$ correspond to double intersections in $Y_0$ by the condition \ref{it2} of controlled resolutions. Therefore the resolution satisfies $(*)$. 
\end{proof}

\begin{example}For certain types of singularities of terminal threefolds explicit resolutions have been constructed. For instance, it can be checked that singularities of type $A_n$ and $E_6$ admit strong controlled resolutions and those of type $D_{2k+1}$ admit controlled resolutions (see \cite{daisroczen01}).\end{example}

There are also some examples coming from the MMP of varieties for which \propositionref{propequi} can be applied. In \cite{tak99} and \cite{aw98} the authors study the singularities of an extremal contraction of a smooth fourfold. Let $Z$ be a smooth fourfold and $h: Z \to X$ a contraction of a extremal ray. Assume that the contraction is of type $(3,1)$, that is, the exceptional set is a divisor and is contracted to dimension 1. A complete classification is given: let $E$ be the exceptional divisor of $h$ and consider $h\restr{E}: E \to C$. This map is either a $\PP^2$-bundle or a quadric bundle. In the case of a $\PP^2$-bundle the variety is smooth or the singularities are not of index 1. In the other cases $X$ have as singular locus the smooth curve $C$. In the list of possible local equations, we can verify that they satisfy the equisingularity condition involving the singularities $A_1$ and $A_2$. Note that in the case of $A_2$, the map $h$ is not a log-resolution.

\bibliography{bib}

\end{document}